\allowdisplaybreaks \numberwithin{equation}{section}
\numberwithin{equation}{section}
\newtheorem{theorem}{Theorem}[section]
\newtheorem{lemma}[theorem]{Lemma}
\theoremstyle{definition}
\newtheorem{definition}[theorem]{Definition}
\theoremstyle{remark}
\begin{document}

\title
{Desingularization of steady vortex of perturbation type in the lake equations}

\author{Daomin Cao, Jie Wan, Changjun Zou}

\address{Institute of Applied Mathematics, Chinese Academy of Sciences, Beijing 100190, and University of Chinese Academy of Sciences, Beijing 100049,  P.R. China}
\email{dmcao@amt.ac.cn}

\address{Institute of Applied Mathematics, Chinese Academy of Sciences, Beijing 100190, and University of Chinese Academy of Sciences, Beijing 100049,  P.R. China}
\email{wanjie15@mails.ucas.edu.cn}

\address{Institute of Applied Mathematics, Chinese Academy of Sciences, Beijing 100190, and University of Chinese Academy of Sciences, Beijing 100049,  P.R. China}
\email{zouchangjun17@mails.ucas.ac.cn}

%\thanks{This work is partially supported by ARC}

\begin{abstract}
	In this paper, we study the desingularization of steady lake model of perturbation type with general nonlinearity $f$. Using the modified vorticity method, we construct a family of steady solutions with vanishing circulation, which constitute a desingularization of a singular vortex. The localization of the singular vortex is determined only by the vanishing rate of the circulation. Some qualitative and asymptotic properties are also established.
\end{abstract}

\maketitle

\section{Introduction}
In this paper, we study the desingularization of steady vortex in the lake equations. When Froude number vanishes, the lake equations in a planar domain $D$ with prescribed initial and boundary condition are
\begin{numcases}
{}
\label{1-1} \text{div}(b\,\mathbf{v})=0\,\ \, \ \ \ \ \ \ \  \ \, &\text{in}\  $\mathbb{R}_+\times D$,  \\
\label{1-2} \partial_t\mathbf{v}+(\mathbf{v}\cdot\nabla)\mathbf{v}=-\nabla P \ \ \ \  &\text{in}\  $\mathbb{R}_+\times D$,\\
\label{1-3}  b\mathbf{v}\cdot \mathbf{n}=\nu\ \ \ \ \ \ \ \ \ \ \ \ \ \ \, &\text{on}\  $\mathbb{R}_+\times \partial D$,\\
\label{1-4}  \mathbf{v}(0,x)=\mathbf{v}_0(x)\ \ \ \ \, &\text{in}\ $D$,
\end{numcases}
where   $D\in\mathbb{R}^2$ is a bounded simply-connected domain, $b:\bar{D}\to\mathbb{R}$ is a nonnegative depth function, $\mathbf{v}=(v_1,v_2)$ is the velocity field, $P$ is the scalar pressure, $\mathbf{n}$ is the unit outward normal to $\partial D$, and $\nu$ is a penetration condition defined on $\partial\Omega$ which satisfies compatibility condition $\int_{\partial D}\nu d\sigma=0$. It is reasonable to assume $b$ can only attain $0$ on $\partial D$, which is the border of lake. For the case $\nu\equiv 0$, which is usually called the impermeability boundary condition, we refer to \cite{CZZ}. In this paper, we consider the case $\nu\not\equiv 0 $, namely, the steady vortex is added as small perturbation to a nontrivial irrotational background flow.

When the typical velocity magnitude is small in comparison to the magnitude of gravity waves, the incompressible 3D Euler equations can be approximated by the lake equations. Therefore, lake equations apply to a domain which is shallow compared to its width and whose free surface exhibits negligible surface motion. It is noteworthy that the two-dimensional Euler equations and the three-dimensional axisymmetric Euler equations are two particular cases of this lake model. For more background on lake model, we refer to \cite{D1}.

By virtue of \eqref{1-1} and boundary condition, there exists a Stokes stream function $\psi$ such that
\begin{equation}\label{1-5}
\mathbf{v}=\left(\frac{\partial_2\psi}{b},-\frac{\partial_1\psi}{b}\right).
\end{equation}\label{1-6}
Define the vorticity of the flow $\omega=\text{curl}\mathbf{v}:=\partial_1v_2-\partial_2v_1$,
using the vector identity 
\begin{equation}
\frac{1}{2}\nabla(|\mathbf{v}|^2)= (\mathbf{v}\cdot\nabla)\mathbf{v}+\mathbf{v}\times(\nabla\times\mathbf{v}),
\end{equation}
we have
\begin{equation}\label{1-7}
\partial_t\mathbf{v}+\frac{1}{b}\omega\nabla\psi=-\nabla(P+\frac{1}{2}|\mathbf{v}|^2).
\end{equation}
If we apply $curl$ on \eqref{1-7}, one obtains the following vorticity equation
\begin{equation}\label{1-8}
\partial_t \omega+div(\omega\mathbf{v})=0.
\end{equation}
We define the potential vorticity to be $\zeta:=b^{-1}\omega$. According to \eqref{1-1}, equation \eqref{1-8} becomes
\begin{equation}\label{1-9}
\partial_t\zeta+\mathbf{v}\cdot\nabla\zeta=0,
\end{equation}
which is a nonlinear transport equation. Now we consider the steady lake equations. By substituting \eqref{1-5} into \eqref{1-9}, we have
\begin{equation}\label{1-10}
\nabla^{\perp}\psi\cdot\nabla\zeta=0,
\end{equation}
where $x^{\perp}=(x_2,-x_1)$ denotes clockwise rotation through $\frac{\pi}{2}$. Equation \eqref{1-10} suggests
that $\psi$ and $\zeta$ are functionally dependent. As a result, if we take $\zeta=f(\psi)$ for some vorticity function $f:\mathbb{R}\to\mathbb{R}$, equation \eqref{1-10} automatically holds. Define $F$ to be a primitive function of $f$, we obtain $\nabla F(\psi)=\zeta\nabla\psi$. So once we find $\psi$, the velocity of the flow is given by \eqref{1-5} and the pressure is given by \eqref{1-7}, namely $P=-F(\psi)-\frac{1}{2}|\mathbf{v}|^2$. 
By \eqref{1-1} and the definition of $\zeta$, there holds
\begin{equation}\label{1-11}
-\text{div}(b^{-1}\nabla \psi)=b\zeta.
\end{equation}
Meanwhile, the boundary condition \eqref{1-3} and the definition of $\psi$ \eqref{1-5} lead to
\begin{equation}\label{1-12}
\nabla^{\perp} \psi\cdot\mathbf{n}=\nu \ \ \ \ \text{on}  \ \ \partial D.
\end{equation} 
The system \eqref{1-10}, \eqref{1-11} and \eqref{1-12} is called the vortex formulation of the lake equations. In the following, we study desingularization of this system
\begin{equation}\label{1-13}
\begin{cases}
-\text{div}(b^{-1}\nabla \psi)=b\zeta\ \ \ \  &\text{in}  \ \ D,\\
\nabla^{\perp}\psi\cdot\nabla\zeta=0\ \ \ \ &\text{in}  \ \ D,\\
\nabla^{\perp} \psi\cdot\mathbf{n}=\nu\ \ \ \ &\text{on}  \ \ \partial D. \\
\end{cases}
\end{equation}
In order to solve this system, as in \cite{DV}, we assume $q\in C^1(\bar{D})\cap C^2(D)$ be the solution of
\begin{equation}\label{1-14}
\begin{cases}
-\text{div}(b^{-1}\nabla q)=0\ \ \ \  &\text{on}  \ \ D,\\
\nabla^{\perp} q\cdot\mathbf{n}=\nu\ \ \ \ &\text{on}  \ \ \partial D. \\
\end{cases}
\end{equation}
Notice that $q$ corresponds to a nontrivial irrotational background flow. If we define $u:=\psi-q$ to be the Stokes stream function of vortex perturbation, $u$ will satisfy
\begin{equation}\label{1-15}
\begin{cases}
-\text{div}(b^{-1}\nabla u)=b\zeta\ \ \ \  &\text{in}  \ \ D,\\
\nabla^{\perp}(u+q)\cdot\nabla\zeta=0\ \ \ \ &\text{in}  \ \ D,\\
\nabla^{\perp} u\cdot\mathbf{n}=0\ \ \ \ &\text{on}  \ \ \partial D. \\
\end{cases}
\end{equation}
Similar to results of Burton \cite{B}, it suffices to find $(u^\varepsilon,\zeta^\varepsilon)$ satisfying $\zeta^\varepsilon=f(u^\varepsilon)$ for some increasing function $f$. Thus we are interested in studying the asymptotics of solutions of
\begin{equation}\label{1-16}
\mathcal{L}u^\varepsilon=-\frac{1}{b}\,\text{div}\frac{\nabla u^\varepsilon}{b}=\frac{\delta(\varepsilon)}{\varepsilon^2}f(u^\varepsilon+q)=\zeta^\varepsilon.
\end{equation}
The total circulation of vortex perturbation is defined by
\begin{equation}\label{1-17}
\mathcal{C}(\varepsilon):=\int_D\zeta^\varepsilon b=\kappa_0\delta(\varepsilon),
\end{equation}
where $\kappa_0>0$ is a fixed constant and $\delta(\varepsilon)$ is the same term in \eqref{1-16} dominating the vanishment of perturbation. In this paper, we are concerned with weak solution of \eqref{1-16} and \eqref{1-17} as the circulation $\mathcal{C}(\varepsilon)$ vanishes, namely, $\delta(\varepsilon)$ is of order $o_\varepsilon(1)$. 

Our work originates from the study of the two-dimensional Euler equations.  In \cite{YJ}, Yang considered the Euler flow in $\mathbb{R}^+_2$ with $q(x)=Wx_1+d$ and constructed a class of vortices with vanishing circulation. He has proved that as $\varepsilon\to 0^+$, these vortices will shrink, and concentrate near some point on $\partial \mathbb{R}^+_2$, where $q$ attains its maximum.  In \cite{LYY}, Li-Yan-Yang obtained a similar result in bounded domain with the vortices localized around the maximum point of $q$. It is notable that in \cite{YJ} and \cite{LYY}, the circulation vanishes at an asymptotic rate $\delta(\varepsilon)\approx 1/\ln\frac{1}{\varepsilon}$. 

Inspired by an observation in Berger-Fraenkel \cite{BF2}, Didier Smets-Van Schaftingen \cite{SV} studied the case $\delta(\varepsilon)\approx 1$ and constructed vorices concentrating near a critical point of Kirchhoff-Routh function with $q\equiv0$. The method in \cite{YJ}, \cite{LYY} and \cite {SV} was based on the mountain pass theorem of Ambrosetti and Rabinowitz \cite{AR}, which is now called the stream-function method. 

In \cite{CW3}, Cao-Wang-Zhan studied the case $\delta(\varepsilon)$ is sharp by using a different strategy. Their method is called the vorticity method, which was introduced by Arnold \cite{A} \cite{AK} and implemented successfuly by Turkington \cite{T}. They proved that the limiting behavior of vortices in this sharp-vanishing case is similar to that in \cite{LYY}. 

For the lake model, there are less related work. Using the stream-function method, De Valeriola-Van Schaftingen \cite{DV} constructed a family of desingularized solutions of equation \eqref{1-13} when $f(s)=s^p_+$ for some $p>1$ and $\delta(\varepsilon)\approx 1$. Dekeyser \cite{D1} \cite{D2} also investigated the problem by the vorticity method. However, the vorticity distribution function $f$ in \cite{D1} \cite{D2} is left undetermined. Recently, Cao-Zhan-Zou \cite{CZZ} studied the case $\delta(\varepsilon)=1$ with general nonlinearity $f$.   

Now here comes the question: can we obtain a different asymptotic localization of the vortices by adjusting the vanishing rate $\delta(\varepsilon)$? In this paper, we give a positive answer. Actually, we give a classification result: when the total circulation of flow tends to zero at a different rate (dominated by $\delta(\varepsilon)$), the asymptotic localization of vortices can be one of the three following positions: the maximum point of the depth function $b$, the maximum point of $q$, or the maxmum point of a function which is a linear combination of $b$ and $q$. 

By an adaption of the vorticity method, we study the desingularization of \eqref{1-16} and \eqref{1-17}. The vorticity function in this paper is of general type. In fact, we can prove that the assumption on $f$ can be reduced when the circulation $\mathcal{C}(\varepsilon)$ vanishes at a higher rate. More precisely, we make the following assumptions on $f$:
\begin{itemize}
	\item[(H1)] $f$ is continuous on $\mathbb{R}\setminus\{0\}$, $f(s)=0$ for $s\leq 0$ and $f$ is strictly increasing in $[0,+\infty)$ with $f(0^+):=\lim_{s\to0^+}f(s)\ge0$;
	\item[(H2)] There exists $\vartheta_0\in(0,1)$ such that
	$$\int_0^s(f(r)-f(0^+))dr\leq \vartheta_0(f(s)-f(0^+))s,\,\,\forall\,s\geq0.$$
\end{itemize}

Throughout the sequel we shall use the following notations: $B_r(y)$ denotes an open ball in $\mathbb{R}^2$ of center $y$ and radius $r>0$; $\chi_{_A}$ denotes the characteristic function of $A\subseteq D$; Lebesgue measure on $\mathbb{R}^2$ is denoted $\textit{m}$, and is to be understood as the measure defining any $L^p$ space and $W^{1,p}$ space, except when stated otherwise; $\nu$ denotes the measure on $\mathbb{R}^2$ having density $b$ with respect to $\textit{m}$ and $|\cdot|$ denotes the $\nu$-measure; $O_\varepsilon(1)$ denotes a number which stays bounded as $\varepsilon$ goes to zero and $o_\varepsilon(1)$ denotes a number which goes to zero as $\varepsilon$ goes to zero. $O_\varepsilon(1)$ and $o_\varepsilon(1)$ only depend on $\varepsilon$. In order to study weak solution of \eqref{1-16} and \eqref{1-17}, we define $\mathcal{K}$ as the inverse of $\mathcal{L}$ as follows. One can check the operator $\mathcal{K}$ is well-defined, see, e.g., \cite{D1} \cite{DV}.
\begin{definition}\label{def1}
	The Hilbert space $H(D)$ is the completion of $C_0^\infty(D)$ with the scalar products
	\begin{equation*}
	\langle u,v\rangle_H=\int_D\frac{1}{b^2}\nabla u\cdot\nabla v d\nu.
	\end{equation*}
	We define inverses $\mathcal{K}$ of $\mathcal{L}$ in the weak solution sense,
	\begin{equation}\label{1-18}
	\langle \mathcal{K}u,v\rangle_H=\int_D uv d\nu \ \  \text{for all}\  v\in H(D), \ \ when \ u \in  L^p(D)\ \text{for some}\ p>1.
	\end{equation}
\end{definition}
As in \cite{D1}, we introduce the following definition.
\begin{definition}\label{def2}
	 A lake $(D,b)$ is said to be continuous if the operator $\mathcal{K}$ admits the following integral kernel representation:
	 $$\mathcal{K}\zeta(x)=b(x)\int_D G(x,y)\zeta(y)d\nu(y)+\int_D R(x,y)\zeta(y)d\nu(y) \ \ \ \forall\zeta\in L^p(D,d\nu),p>1$$
	 where $G$ is the Green’s function for $-\Delta$ with Dirichlet boundary conditions, and $R: D\times D \to\mathbb{R}$ is a bounded and measurable correction function.
\end{definition}
More general concept of continuous lake is defined in \cite{D2}. We remark that this class covers two situations encountered in the literature: first, the case of $b\in W^{1,\infty}(D)\cap C^1_{\text{loc}}(D)$ with additional condition that $\text{inf}_{D}b>0$; second, the case of a degenerated depth function that vanishes as a polynomial of some regularized distance at the boundary. Mixed conditions are also allowable. For more examples, we refer to \cite{D1}.

In this paper, we assume $D\subset \mathbb{R}^2$ be a bounded simply-connected Lipschitz domain and nonnegative depth function $b(x)\in C^{\alpha}(\bar{D})\cap C^{1}(D)$ for some $\alpha\in(0,1)$ satisfying $b(x)>0$ in $D$. A continuous lake $(D,b)$ is called a continuous regular lake if $D$ and $b$ satisfy above assumptions. To clarify the first theorem, we denote $\mathcal{S}:=\{x\in\bar{D}\ |\ b(x)=\text{max}_{\bar{D}} b\}$.

\begin{theorem}\label{thm1}
	Let $(D,b)$ be a continuous regular lake, $f$ be a function satisfying (H1) and (H2). As $\varepsilon$ goes to zero, impose following restrictions on $\delta(\varepsilon)$:
	\begin{itemize}
		\item[(a)] If $\mathcal{S}\cap D\not=\varnothing$, let $\delta(\varepsilon)=o_\varepsilon(1)$ and $(\ln\frac{1}{\varepsilon})^{-1}/\delta(\varepsilon)=o_\varepsilon(1)$.
		\item[(b)] If $\mathcal{S}\subset \partial D$, let $\delta(\varepsilon)=o_\varepsilon(1)$ and $(\ln\ln\frac{1}{\varepsilon}/\ln\frac{1}{\varepsilon})/\delta(\varepsilon)=o_\varepsilon(1)$.
	\end{itemize}
 Then for all sufficiently small $\varepsilon>0$, there exists a family of solutions $(\psi^\varepsilon,\zeta^\varepsilon)$ with the following properties:
	\begin{itemize}
		\item[(i)]For any $p>1$, $\zeta^\varepsilon\in L^p(D)$, $\psi^\varepsilon\in W^{2,p}_{\text{loc}}(D)$ and satisfies
	    \begin{equation*}	
	    \mathcal{L}\psi^\varepsilon=\zeta^\varepsilon\ \ \text{a.e.} \ \text{in} \ D.
	    \end{equation*}
	    $(\psi^\varepsilon,\zeta^\varepsilon)$ is of the form
	    \begin{equation*}
	    \psi^\varepsilon=\mathcal{K}\zeta^\varepsilon+q-\mu^\varepsilon,\ \  \zeta^\varepsilon=\frac{\delta(\varepsilon)}{\varepsilon^2}f(\psi^\varepsilon),\ \
	    \int_D\zeta^\varepsilon d\nu=\kappa_0\delta(\varepsilon)
	    \end{equation*}
	    for some $\mu^\varepsilon$ depending on $\varepsilon$. 
		\item[(ii)] 
		One has $$\lim\limits_{\varepsilon\to0^+}\frac{\ln diam(supp(\zeta^\varepsilon))}{\ln\varepsilon}=1.$$ Furthermore, as $\varepsilon$ goes to zero, $\text{supp}(\zeta^\varepsilon)$ will shrink to $\mathcal{S}$. That is, for every $l>0$, there holds $$supp{\zeta^\varepsilon}\subset \mathcal{S}_l:=\{x\in D|dist(x,\mathcal{S})<l\}$$ provided $\varepsilon>0$ is sufficient small. 
		\item[(iii)]
		If $\mathcal{S}\cap D\not=\varnothing$, there exists some constant $\eta>0$ such that $$dist(supp(\zeta^\varepsilon),\partial D)>\eta.$$ One has
		\begin{equation*}
		\mu^\varepsilon= \frac{\kappa_0 sup_D b}{2\pi}\delta(\varepsilon)\ln \frac{1}{\varepsilon}+O_\varepsilon(1).
		\end{equation*}
		If $\mathcal{S}\subset \partial D$, there exist constants $C_d,\gamma_0>0$ such that $$dist(supp(\zeta^\varepsilon),\partial D)\ge\frac{C_d}{(\ln\frac{1}{\varepsilon})^{\gamma_0}}.$$  
		One has
		\begin{equation*}
		\mu^\varepsilon= \frac{\kappa_0 sup_D b}{2\pi}\delta(\varepsilon)\ln \frac{1}{\varepsilon}- O_\varepsilon(1)\cdot\delta(\varepsilon)\cdot\ln\ln\frac{1}{\varepsilon}+O_\varepsilon(1).
		\end{equation*}
		\item[(iv)]
		Let the center of vorticity be
		\begin{equation*}
		X^\varepsilon=\frac{\int_D x\zeta^\varepsilon(x)dm(x)}{\int_D \zeta^\varepsilon(x)dm(x)},
		\end{equation*}
		and define the rescaled version of $\zeta^\varepsilon$ to be
		\begin{equation*}
		\xi^\varepsilon(x)=\frac{\varepsilon^2}{\delta(\varepsilon)}\zeta^\varepsilon(X^\varepsilon+\varepsilon x).
		\end{equation*}
		Then every accumulation points of $\xi^\varepsilon(x)$ as $\varepsilon \to 0^+$, in the weak topology of $L^2$, are radially nonincreasing functions.
	\end{itemize}
\end{theorem}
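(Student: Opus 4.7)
The plan is to construct $(\psi^\varepsilon,\zeta^\varepsilon)$ as a constrained maximizer via the Turkington--Burton vorticity method, adapted to the lake. Let $F(s):=\int_0^s f(r)\,dr$ and let $F^*$ denote its Legendre transform, so that $(F^*)'=f^{-1}$ on $[f(0^+),\infty)$; condition (H2) furnishes the required superlinear growth of $F^*$. On the admissible class
\[\mathcal{M}_\varepsilon:=\Bigl\{\zeta\in L^\infty(D):\zeta\ge 0,\ \int_D\zeta\,d\nu=\kappa_0\delta(\varepsilon)\Bigr\},\]
I would maximize
\[\mathcal{E}_\varepsilon(\zeta):=\frac{1}{2}\int_D\zeta\,\mathcal{K}\zeta\,d\nu+\int_D q\,\zeta\,d\nu-\frac{\delta(\varepsilon)}{\varepsilon^2}\int_D F^*\!\Bigl(\frac{\varepsilon^2}{\delta(\varepsilon)}\zeta\Bigr)d\nu.\]
Existence of a maximizer $\zeta^\varepsilon$ follows from the direct method in weak-$L^p$, using the continuity of $\mathcal{K}$ from Definition \ref{def2}, the superlinearity of $F^*$ to control $\|\zeta^\varepsilon\|_{L^p}$, and the nonemptiness of $\mathcal{M}_\varepsilon$. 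Testing with $(\delta(\varepsilon)/\varepsilon^2)\chi_{B_\varepsilon(x_0)}$ for $x_0\in\mathcal{S}$ yields the driving lower bound
\[\mathcal{E}_\varepsilon(\zeta^\varepsilon)\ge\frac{\kappa_0^2\sup_D b}{4\pi}\,\delta(\varepsilon)^2\ln\frac{1}{\varepsilon}+O(\delta(\varepsilon)^2).\]

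The Euler--Lagrange condition on $\{\zeta^\varepsilon>0\}$ reads $(F^*)'(\varepsilon^2\zeta^\varepsilon/\delta(\varepsilon))=\mathcal{K}\zeta^\varepsilon+q-\mu^\varepsilon$, with $\mu^\varepsilon$ the Lagrange multiplier for the circulation constraint. Setting $\psi^\varepsilon:=\mathcal{K}\zeta^\varepsilon+q-\mu^\varepsilon$ and inverting gives the profile identity $\zeta^\varepsilon=(\delta(\varepsilon)/\varepsilon^2)f(\psi^\varepsilon)$ of (i); the equation $\mathcal{L}\psi^\varepsilon=\zeta^\varepsilon$ follows from $\mathcal{L}\mathcal{K}=\mathrm{Id}$ and $\mathcal{L}q=0$, and interior elliptic regularity delivers $\psi^\varepsilon\in W^{2,p}_{\mathrm{loc}}$. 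The $\mu^\varepsilon$-expansion in (iii) is then obtained by pairing the profile identity with $\zeta^\varepsilon$: the self-interaction asymptotics $\int\zeta^\varepsilon\mathcal{K}\zeta^\varepsilon\,d\nu=\frac{\kappa_0^2\sup_D b}{2\pi}\delta(\varepsilon)^2\ln(1/\varepsilon)+\text{l.o.t.}$ combined with the circulation give the leading logarithmic term, while the boundary-distance estimate from the next step controls the $\ln\ln$ correction in case (b).

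The main obstacle is the concentration claim (ii), especially the boundary case $\mathcal{S}\subset\partial D$. Using the kernel decomposition $\mathcal{K}\zeta=b\int G\zeta\,d\nu+\int R\zeta\,d\nu$ and the bound $G(x,y)\le\frac{1}{2\pi}\ln\frac{1}{|x-y|}+C$, one obtains for any $x^\varepsilon\in\mathrm{supp}(\zeta^\varepsilon)$ an upper bound $\mathcal{E}_\varepsilon(\zeta^\varepsilon)\le\frac{\kappa_0^2 b(x^\varepsilon)}{4\pi}\delta(\varepsilon)^2\ln(1/\mathrm{diam}(\mathrm{supp}\,\zeta^\varepsilon))+\text{l.o.t.}$. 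Combining with the lower bound, the hypothesis $\delta(\varepsilon)\gg(\ln(1/\varepsilon))^{-1}$ forces both $b(x^\varepsilon)\to\sup_D b$ and $\ln\mathrm{diam}(\mathrm{supp}\,\zeta^\varepsilon)/\ln\varepsilon\to 1$; spreading mass beyond a shrinking neighbourhood of $\mathcal{S}$ would cost more $\ln(1/\varepsilon)$ at order $\delta(\varepsilon)^2$ than it could gain from $q$ at order $\delta(\varepsilon)$. In case (b), the Dirichlet Green function degenerates near $\partial D$, giving an additional $\ln(1/\mathrm{dist}(x,\partial D))$ defect that is traded off against the gain in $b$; the sharper threshold $\delta(\varepsilon)\gg\ln\ln(1/\varepsilon)/\ln(1/\varepsilon)$ is precisely what lets one simultaneously conclude $\mathrm{dist}(\mathrm{supp}\,\zeta^\varepsilon,\partial D)\gtrsim(\ln(1/\varepsilon))^{-\gamma_0}$ and concentration on $\mathcal{S}$, and it is this balance that feeds the announced $\ln\ln$-correction into $\mu^\varepsilon$.

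For (iv), the rescaled vorticity $\xi^\varepsilon(y)=(\varepsilon^2/\delta(\varepsilon))\zeta^\varepsilon(X^\varepsilon+\varepsilon y)$ has support uniformly bounded by the diameter estimate of (ii) and is uniformly $L^\infty$-bounded via the profile equation. The rescaled problem is, to leading order, translation-invariant with constant coefficient $b(X^\varepsilon)^2$, and its leading-order energy on the blow-up scale is a Newtonian self-interaction minus the distribution-function-invariant $F^*$-penalty. Riesz's rearrangement inequality strictly increases the Newtonian term under symmetric decreasing rearrangement unless the function is already radial about a single point, while the $F^*$-penalty is preserved; hence any weak-$L^2$ accumulation point of $\xi^\varepsilon$ must coincide with its symmetric decreasing rearrangement, and centering at the centroid $X^\varepsilon$ pins the centre of symmetry to the origin.
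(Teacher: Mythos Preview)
Your overall strategy is the same vorticity-method framework as the paper, but two choices diverge from it and the second one contains a genuine gap.

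First, you maximize over $\mathcal{M}_\varepsilon$ with no $L^\infty$ cap, relying on the superlinearity of $F^*$ for compactness. The paper instead truncates to $\mathcal{A}_{\varepsilon,\Lambda}=\{0\le\zeta\le\Lambda\delta(\varepsilon)/\varepsilon^2,\ \int\zeta\,d\nu=\kappa_0\delta(\varepsilon)\}$, obtains a maximizer whose profile has an a priori ``patch'' part $\{\zeta=\Lambda\delta/\varepsilon^2\}$ via the bathtub principle, and only later (Lemma~3.8) shows the patch is empty for $\Lambda$ large. The truncation is not cosmetic: the diameter argument (Lemma~3.4) uses the $L^\infty$ bound in the step $\int_{B_{R\varepsilon}(x)}\ln\frac{\varepsilon}{|x-y|}\Gamma\,dm\le C$, and the upper bound on $\psi^{\varepsilon,\Lambda}$ (Lemma~3.7) that kills the patch also uses it through the bathtub principle. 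Without a cap you would have to supply a substitute for these pointwise-to-integral comparisons.

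Second, and more seriously, your concentration argument for~(ii) is not correct as written. You claim an upper bound
\[
\mathcal{E}_\varepsilon(\zeta^\varepsilon)\le\frac{\kappa_0^2\,b(x^\varepsilon)}{4\pi}\,\delta(\varepsilon)^2\ln\frac{1}{\operatorname{diam}(\operatorname{supp}\zeta^\varepsilon)}+\text{l.o.t.}
\]
for any $x^\varepsilon\in\operatorname{supp}\zeta^\varepsilon$, but on the support $|x-y|\le\operatorname{diam}$ gives $\ln\frac{1}{|x-y|}\ge\ln\frac{1}{\operatorname{diam}}$, so the double logarithmic integral is bounded \emph{below}, not above, by $(\kappa_0\delta)^2\ln(1/\operatorname{diam})$. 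An energy upper bound alone does not yield a diameter bound. The paper's mechanism is different: one first proves a lower bound on the Lagrange multiplier $\mu^{\varepsilon,\Lambda}$ (Lemma~3.3, using (H2$'$) and a Sobolev-type estimate on $(\psi^{\varepsilon,\Lambda}-c)_+$), then for each $x\in\operatorname{supp}\zeta$ the inequality $\mu^{\varepsilon,\Lambda}\le\mathcal{K}\zeta^{\varepsilon,\Lambda}(x)+q(x)$ forces $\int_D\ln\frac{\varepsilon}{|x-y|}\Gamma(y)\,dm(y)$ to be bounded below, which after splitting over $B_{R\varepsilon}(x)$ and its complement gives the half-mass-in-a-ball estimate and hence the diameter bound. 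Your sketch omits this $\mu$-lower-bound step entirely; the ``pairing the profile identity with $\zeta^\varepsilon$'' you mention gives $\mu^\varepsilon$ only up to the uncontrolled term $(2-\vartheta_1^{-1})\mathcal{F}_\varepsilon(\zeta^\varepsilon)$ when $\vartheta_1<1/2$.

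Finally, your test function $(\delta/\varepsilon^2)\chi_{B_\varepsilon(x_0)}$ with $x_0\in\mathcal{S}$ does not work in case~(b), since then $x_0\in\partial D$ and the Green function degenerates. The paper places the test vortex at interior points $x^\varepsilon$ with $\operatorname{dist}(x^\varepsilon,\partial D)\sim(\ln\frac1\varepsilon)^{-1/\alpha}$, which is exactly what produces the $\delta^2\ln\ln\frac1\varepsilon$ correction in the energy lower bound (Lemma~3.2) and ultimately in $\mu^\varepsilon$.
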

Here, the condition $\mathcal{S}\cap D\not=\varnothing$ means there exists some deepest point inside the lake, while $\mathcal{S}\subset \partial D$ means the deepest point is only on the boundary. When the vanishment is gentler than the critical value $\delta(\varepsilon)=1/\ln\frac{1}{\varepsilon}$, Theorem \ref{thm1} asserts $supp(\zeta^\varepsilon)$ will be located around the deepest position in the lake. For the first case, the vortices will be far away from the boundary; for the second although the vortices will concentrate near the boundary, there must be a small positive distance between $supp(\zeta^\varepsilon)$ and $\partial D$.

When the vanishing rate $\delta(\varepsilon)$ is of the critical value $1/\ln\frac{1}{\varepsilon}$, the limiting behavior of solutions will be somehow different. To illustrate it, we denote $\phi(x):=\frac{1}{4\pi}\kappa_0b(x)+q(x)$. By properties of $b(x)$ and $q(x)$, we have $\phi(x)\in C(\bar{D})\cap C^1(D)$. Suppose $\phi(x)$ attains its maximum at a unique point $\hat{x}\in D$, in the next theorem we claim that the support of the vortex will shrink and get close to $\hat{x}$.
\begin{theorem}\label{thm2}
	Let $(D,b)$ be a continuous regular lake, $f$ be a function satisfying (H1) and (H2). Suppose $\delta(\varepsilon)=(\ln\frac{1}{\varepsilon})^{-1}$ and $\text{max}_{\bar{D}}\phi=\phi(\hat{x})$. Moreover, assume $\phi$ attains its maximum at only one point $\hat{x}\in D$. Then for all sufficiently small $\varepsilon>0$, there exists a family of solutions $(\psi^\varepsilon,\zeta^\varepsilon)$ with the following properties:
	\begin{itemize}
		\item[(i)]For any $p>1$, $\zeta^\varepsilon\in L^p(D)$, $\psi^\varepsilon\in W^{2,p}_{\text{loc}}(D)$ and satisfies
		\begin{equation*}	
		\mathcal{L}\psi^\varepsilon=\zeta^\varepsilon\ \ \text{a.e.} \ \text{in} \ D.
		\end{equation*}
		$(\psi^\varepsilon,\zeta^\varepsilon)$ is of the form
		\begin{equation*}
		\psi^\varepsilon=\mathcal{K}\zeta^\varepsilon+q-\mu^\varepsilon,\ \  \zeta^\varepsilon=\frac{\delta(\varepsilon)}{\varepsilon^2}f(\psi^\varepsilon),\ \
		\int_D\zeta^\varepsilon d\nu=\kappa_0\delta(\varepsilon)
		\end{equation*}
		for some $\mu^\varepsilon$ depending on $\varepsilon$. 
		\item[(ii)] 
		For every $\beta\in(0,1)$, there holds
		\begin{equation}
		diam\left(supp(\zeta^\varepsilon)\right)\le 2\varepsilon^{1-\beta}.
		\end{equation} 
		Furthermore, Let the center of vorticity be
		\begin{equation*}
		X^\varepsilon=\frac{\int_D x\zeta^\varepsilon(x)dm(x)}{\int_D \zeta^\varepsilon(x)dm(x)}.
		\end{equation*}
	   One has $X^\varepsilon\to\hat{x}$ as $\varepsilon\to 0^+$.
	   \item[(iii)] 
	   One has 
	   \begin{equation*}
	   \mathcal{K}\zeta^\varepsilon=\frac{1}{2\pi}\kappa_0b(\hat{x})+o_\varepsilon(1)
	   \end{equation*}
	   and
	   \begin{equation*}
	   \mu^\varepsilon=\phi(\hat{x})+o_\varepsilon(1)=\frac{1}{2\pi}\kappa_0b(\hat{x})+q(\hat{x})+o_\varepsilon(1).
	   \end{equation*}
	   \item[(iv)]
	   Define the rescaled version of $\zeta^\varepsilon$ to be
	   \begin{equation*}
	   \xi^\varepsilon(x)=\frac{\varepsilon^2}{\delta(\varepsilon)}\zeta^\varepsilon(X^\varepsilon+\varepsilon x)=\varepsilon^2\cdot\ln\frac{1}{\varepsilon}\cdot\zeta^\varepsilon(X^\varepsilon+\varepsilon x).
	   \end{equation*}
	   Then every accumulation points of $\xi^\varepsilon(x)$ as $\varepsilon \to 0^+$, in the weak topology of $L^2$, are radially nonincreasing functions.
	\end{itemize}	
\end{theorem}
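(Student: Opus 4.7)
The plan is to adapt the modified vorticity method used to prove Theorem~\ref{thm1} to the critical vanishing rate $\delta(\varepsilon)=(\ln\tfrac{1}{\varepsilon})^{-1}$, where both the self-interaction and the background contributions enter the leading-order energy at the same order $\delta(\varepsilon)$. On the admissible set
\[
M_\varepsilon := \Big\{\zeta \in L^\infty(D) : 0\le\zeta\le\tfrac{\delta(\varepsilon)}{\varepsilon^2}\Lambda,\ \int_D \zeta\,d\nu = \kappa_0\delta(\varepsilon)\Big\},
\]
with $\Lambda$ a large constant so that the sup-bound is inactive in the limit, I would maximize the penalized energy
\[
E_\varepsilon(\zeta) := \tfrac{1}{2}\!\int_D \zeta\,\mathcal{K}\zeta\, d\nu + \int_D q\,\zeta\, d\nu - \tfrac{\delta(\varepsilon)}{\varepsilon^2}\!\int_D F^*\!\Big(\tfrac{\varepsilon^2}{\delta(\varepsilon)}\zeta\Big) d\nu,
\]
where $F^*$ is a Legendre conjugate of a primitive of $f$. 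Compactness of $\mathcal{K}:L^{p'}(D,d\nu)\to L^p(D,d\nu)$ together with assumption (H2) produces a maximizer $\zeta^\varepsilon\in M_\varepsilon$, and the Euler--Lagrange equation, with multiplier $\mu^\varepsilon$ for the mass constraint, gives $\zeta^\varepsilon=\tfrac{\delta(\varepsilon)}{\varepsilon^2}f(\psi^\varepsilon)$ with $\psi^\varepsilon=\mathcal{K}\zeta^\varepsilon+q-\mu^\varepsilon$; elliptic regularity then upgrades $\psi^\varepsilon\in W^{2,p}_{\mathrm{loc}}(D)$. This proves item~(i).

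For items (ii) and (iii), I would compare the maximal energy with that of a trial vorticity $\zeta_{\mathrm{trial}}$ obtained by placing the mass $\kappa_0\delta(\varepsilon)$ uniformly in a ball of radius $\simeq\varepsilon$ centered at $\hat{x}$. Using Definition~\ref{def2} and the expansion $G(x,y)=\tfrac{1}{2\pi}\ln\tfrac{1}{|x-y|}+H(x,y)$ with $H$ bounded, a direct computation yields
\[
E_\varepsilon(\zeta_{\mathrm{trial}}) = \tfrac{\kappa_0^2\, b(\hat{x})}{4\pi}\,\delta(\varepsilon)^2 \ln\tfrac{1}{\varepsilon} + \kappa_0\, q(\hat{x})\,\delta(\varepsilon) + o_\varepsilon(\delta(\varepsilon)) = \kappa_0\,\phi(\hat{x})\,\delta(\varepsilon) + o_\varepsilon(\delta(\varepsilon)),
\]
using $\delta(\varepsilon)\ln\tfrac{1}{\varepsilon}=1$. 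Matching this lower bound against the upper bound $E_\varepsilon(\zeta^\varepsilon)\le \kappa_0\phi(X^\varepsilon)\delta(\varepsilon)+o_\varepsilon(\delta(\varepsilon))$, obtained by Jensen's inequality applied to the logarithmic self-interaction, forces $\phi(X^\varepsilon)\to\phi(\hat{x})$, hence $X^\varepsilon\to\hat{x}$ by uniqueness of the maximizer. A standard concentration argument (if $\mathrm{diam}(\mathrm{supp}\,\zeta^\varepsilon)>2\varepsilon^{1-\beta}$ along a subsequence, the self-interaction drops by at least $c\beta\delta(\varepsilon)$) then gives the diameter bound. Once $\mathrm{supp}\,\zeta^\varepsilon\subset B_{\varepsilon^{1-\beta}}(X^\varepsilon)$, the Green kernel representation produces $\mathcal{K}\zeta^\varepsilon(x)=\tfrac{\kappa_0 b(\hat{x})}{2\pi}\delta(\varepsilon)\ln\tfrac{1}{\varepsilon}+o_\varepsilon(1)=\tfrac{\kappa_0 b(\hat{x})}{2\pi}+o_\varepsilon(1)$ uniformly on the support, and since $\psi^\varepsilon=f^{-1}\big(\tfrac{\varepsilon^2}{\delta(\varepsilon)}\zeta^\varepsilon\big)=o_\varepsilon(1)$ there, the Euler--Lagrange identity pins $\mu^\varepsilon=\mathcal{K}\zeta^\varepsilon(\hat{x})+q(\hat{x})+o_\varepsilon(1)=\tfrac{\kappa_0 b(\hat{x})}{2\pi}+q(\hat{x})+o_\varepsilon(1)$.

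For (iv), I would rescale to $\xi^\varepsilon(x)=\tfrac{\varepsilon^2}{\delta(\varepsilon)}\zeta^\varepsilon(X^\varepsilon+\varepsilon x)$ on a fixed ball $B_R(0)$, extract a weak $L^2$ limit using the uniform $L^\infty$ bound inherited from $M_\varepsilon$, and apply a Schwarz-symmetrization argument: to leading order the rescaled variational problem reduces to the standard mass-constrained maximization of $\tfrac{1}{4\pi}\!\iint\ln\tfrac{1}{|x-y|}\xi(x)\xi(y)\,dx\,dy$ on $\mathbb{R}^2$, for which the strict rearrangement inequality forces any weak limit to coincide with its own symmetric-decreasing rearrangement.

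The main obstacle, distinguishing this theorem from Theorem~\ref{thm1}, is the sharp matched-asymptotic expansion that captures the critical balance between the $b$- and $q$-contributions. In the subcritical regime of Theorem~\ref{thm1} the self-interaction dominates and the location is determined by $\max b$ alone; at the critical rate $\delta(\varepsilon)=(\ln\tfrac{1}{\varepsilon})^{-1}$ both terms contribute exactly at order $\delta(\varepsilon)$, so the location is selected by the combined functional $\phi=\tfrac{\kappa_0}{4\pi}b+q$. Keeping the precise leading constants in the Green kernel under control (the factor $b(\hat{x})$ coming from $d\nu=b\,dm$ and the factor $\tfrac{1}{2\pi}$ from the fundamental solution of $-\Delta$), while uniformly bounding the nonlinear penalty via (H2) so that $\psi^\varepsilon=o_\varepsilon(1)$ on $\mathrm{supp}\,\zeta^\varepsilon$ is preserved to the required precision, is where the most delicate work lies.
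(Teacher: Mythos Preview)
Your overall framework---maximize the penalized energy on $M_\varepsilon$, derive the Euler--Lagrange form, then localize by matching upper and lower energy bounds, and finally rescale and use rearrangement for (iv)---coincides with the paper's. The trial computation at $\hat{x}$ and the identification of the critical balance producing $\phi=\tfrac{\kappa_0}{4\pi}b+q$ are also the same. There is, however, a genuine gap in your localization argument for (ii).

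The upper bound $E_\varepsilon(\zeta^\varepsilon)\le\kappa_0\phi(X^\varepsilon)\delta(\varepsilon)+o_\varepsilon(\delta(\varepsilon))$ does not follow from Jensen's inequality. What the bathtub principle actually yields is the pointwise bound $\tfrac{1}{2}\mathcal{K}\zeta^\varepsilon+q\le\phi+C\delta(\varepsilon)$, hence $E_q(\zeta^\varepsilon)\le\int_D\phi\,\zeta^\varepsilon\,d\nu+C\delta^2(\varepsilon)\le\kappa_0\phi(\hat{x})\delta(\varepsilon)+C\delta^2(\varepsilon)$. Matching with the lower bound therefore only gives $\int_D\phi\,\zeta^\varepsilon\,d\nu/(\kappa_0\delta(\varepsilon))\to\phi(\hat{x})$, i.e.\ \emph{weak} concentration: an $o_\varepsilon(1)$ fraction of the mass may still lie far from $\hat{x}$. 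This is exactly what the paper proves first (its Lemma~4.5), and it does give $X^\varepsilon\to\hat{x}$; but it is not enough to run Turkington's pointwise argument on the whole of $\mathrm{supp}(\zeta^\varepsilon)$, because that argument requires replacing $b(x)$ and $q(x)$ by $b(\hat{x})$ and $q(\hat{x})$ in the upper bound for $\mathcal{K}\zeta^\varepsilon(x)+q(x)$, which is only legitimate for $x$ already known to lie near $\hat{x}$. Your energy-drop heuristic (``if the diameter exceeds $2\varepsilon^{1-\beta}$ the self-interaction drops by $c\beta\delta(\varepsilon)$'') also fails for the same reason: a tiny mass placed far away enlarges the diameter with negligible energy cost.

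The paper handles this by splitting $\zeta^\varepsilon=\zeta^\varepsilon_1+\zeta^\varepsilon_2$ with $\zeta^\varepsilon_1=\zeta^\varepsilon\chi_{B_{l_0}(\hat{x})}$, first obtaining the asymptotics $\mu^\varepsilon=\tfrac{\kappa_0}{2\pi}b(\hat{x})+q(\hat{x})+o_\varepsilon(1)$ from the weak concentration and (H2$'$), then proving the diameter bound \emph{only} for $\zeta^\varepsilon_1$ via Turkington, and finally eliminating $\zeta^\varepsilon_2$ by a separate argument that you have not mentioned: one checks $\phi(\hat{x})>\max_{\bar D}q$ (a consequence of $b>0$ in $D$ and the hypothesis $\hat{x}\in D$), and observes that for $x$ bounded away from $\hat{x}$ the main mass is far from $x$, so $\mathcal{K}\zeta^\varepsilon(x)=o_\varepsilon(1)$ and hence $\mathcal{K}\zeta^\varepsilon(x)+q(x)\le\max_{\bar D}q+o_\varepsilon(1)<\phi(\hat{x})+o_\varepsilon(1)=\mu^\varepsilon$, forcing $\zeta^\varepsilon_2=0$. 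Without this two-step structure and the inequality $\phi(\hat{x})>\max_{\bar D}q$, the full diameter bound $\mathrm{diam}(\mathrm{supp}\,\zeta^\varepsilon)\le2\varepsilon^{1-\beta}$ claimed in (ii) cannot be concluded.
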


When the vanishment happens to be sharper than the critical value $1/\ln\frac{1}{\varepsilon}$, the precies asymptotic location of $supp(\zeta^\varepsilon)$ will be the maximum point of $q$. Significantly, we do not require $f$ satisfying $(\text{H2})$ and the support of $\zeta^\varepsilon$ may not concentrate near one point. However, if $q$ has only one maximum point in $\bar{D}$, $supp(\zeta^\varepsilon)$ must concentrate near it. Theorem \ref{thm3} generalizes the result in \cite{CW3}. In order to clarify it, we denote $\mathcal{M}:=\{x\in\bar{D}\ |\ q(x)=\text{max}_{\bar{D}} q\}$.
\begin{theorem}\label{thm3}
Let $(D,b)$ be a continuous regular lake, $f$ be a function satisfying (H1). As $\varepsilon$ goes to zero, $\delta(\varepsilon)$ satisfies $\delta(\varepsilon)\ln\frac{1}{\varepsilon}=o_\varepsilon(1)$. Then for all sufficiently small $\varepsilon>0$, there exists a family of solutions $(\psi^\varepsilon,\zeta^\varepsilon)$ with the following properties:
\begin{itemize}
	\item[(i)]
	For any $p>1$, $\zeta^\varepsilon\in L^p(D)$, $\psi^\varepsilon\in W^{2,p}_{\text{loc}}(D)$ and satisfies
	\begin{equation*}	
	\mathcal{L}\psi^\varepsilon=\zeta^\varepsilon\ \ \text{a.e.} \ \text{in} \ D.
	\end{equation*}
	$(\psi^\varepsilon,\zeta^\varepsilon)$ is of the form
	\begin{equation*}
	\psi^\varepsilon=\mathcal{K}\zeta^\varepsilon+q-\mu^\varepsilon,\ \  \zeta^\varepsilon=\frac{\delta(\varepsilon)}{\varepsilon^2}f(\psi^\varepsilon),\ \
	\int_D\zeta^\varepsilon d\nu=\kappa_0\delta(\varepsilon)
	\end{equation*}
	for some $\mu^\varepsilon$ depending on $\varepsilon$. 
	\item[(ii)] 
	One has 
	\begin{equation*}
	\mathcal{K}\zeta^\varepsilon=o_\varepsilon(1), \ \ \ 
	\mu^\varepsilon=\text{max}_{\bar{D}} q+o_\varepsilon(1).
	\end{equation*}
	Furthermore, as $\varepsilon$ goes to zero, $\text{supp}(\zeta^\varepsilon)$ will shrink to $\mathcal{M}$. Namely, for every $l>0$, there holds $$supp{\zeta^\varepsilon}\subset \mathcal{M}_l:=\{x\in D|dist(x,\mathcal{M})<l\}$$ provided $\varepsilon>0$ is sufficient small. 
\end{itemize}	
\end{theorem}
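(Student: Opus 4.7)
The plan is to construct $(\psi^\varepsilon,\zeta^\varepsilon)$ via the same Turkington-style maximization used for Theorems \ref{thm1} and \ref{thm2}, and then exploit the sharper vanishing rate $\delta(\varepsilon)\ln\frac{1}{\varepsilon}=o_\varepsilon(1)$ to reduce the assumptions on $f$ and to shift the localization from $\mathcal{S}$ to $\mathcal{M}$. Concretely, I would work in the admissible class
\begin{equation*}
\mathcal{A}_\varepsilon=\Bigl\{\zeta\in L^\infty(D):\ 0\le\zeta\le \tfrac{\delta(\varepsilon)}{\varepsilon^2}\Lambda,\ \int_D\zeta\,d\nu=\kappa_0\delta(\varepsilon)\Bigr\}
\end{equation*}
for a large constant $\Lambda$, and maximize
\begin{equation*}
\mathcal{E}_\varepsilon(\zeta)=\tfrac12\int_D\zeta\,\mathcal{K}\zeta\,d\nu+\int_D q\,\zeta\,d\nu-\tfrac{\varepsilon^2}{\delta(\varepsilon)}\int_D J(\zeta)\,d\nu,
\end{equation*}
where $J$ is the Legendre-type penalty whose derivative is $f^{-1}$ (extended by $0$ on $(-\infty,f(0^+)]$). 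A direct-method argument, using weak-$\ast$ compactness of $\mathcal{A}_\varepsilon$ and weak continuity of the quadratic term via the kernel representation in Definition \ref{def2}, produces a maximizer $\zeta^\varepsilon$. Writing down the Euler–Lagrange condition and letting $\mu^\varepsilon$ be the Lagrange multiplier of the circulation constraint yields $\zeta^\varepsilon=(\delta(\varepsilon)/\varepsilon^2)f(\mathcal{K}\zeta^\varepsilon+q-\mu^\varepsilon)$ a.e., which together with the standard elliptic regularity for $\mathcal{L}$ gives statement (i); one also shows that for $\varepsilon$ small the upper constraint $\Lambda$ is not saturated, so that no bathtub artefact survives in the limit.

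The core of the argument is a quantitative comparison of the three pieces of $\mathcal{E}_\varepsilon$. For any $\zeta\in\mathcal{A}_\varepsilon$, the log-singularity of $G$ combined with the constraint $\|\zeta\|_\infty\le\Lambda\delta/\varepsilon^2$ and $\|\zeta\|_1=\kappa_0\delta$ yields a bathtub-type bound $\int_D\zeta\,\mathcal{K}\zeta\,d\nu\le C\,\delta(\varepsilon)^2\ln\frac{1}{\varepsilon}$, which is $o_\varepsilon(1)\cdot\delta(\varepsilon)$ by hypothesis. The same estimate applied pointwise, splitting $\mathcal{K}\zeta^\varepsilon(x)=b(x)\int G(x,y)\zeta^\varepsilon(y)d\nu+\int R(x,y)\zeta^\varepsilon(y)d\nu$ into the log-singular part on a shrinking ball plus a bounded remainder, gives $\|\mathcal{K}\zeta^\varepsilon\|_\infty=O(\delta\ln\frac{1}{\varepsilon})=o_\varepsilon(1)$, which is the first half of (ii). To pin down $\mu^\varepsilon$, I would test $\mathcal{E}_\varepsilon$ from below against a concentrated trial $\zeta_0=A_\varepsilon(\delta/\varepsilon^2)\chi_{B_\varepsilon(x^\ast)}$ with $x^\ast\in\mathcal{M}$ and $A_\varepsilon$ calibrated to the circulation. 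Both the self-interaction and the penalty for this trial are $o(\delta)$, while $\int q\zeta_0\,d\nu=\kappa_0\delta\,\max_{\bar D}q+o(\delta)$, so $\mathcal{E}_\varepsilon(\zeta^\varepsilon)/\delta(\varepsilon)\to\kappa_0\max_{\bar D}q$. Combined with the a.e. identity $\mathcal{K}\zeta^\varepsilon+q-\mu^\varepsilon=f^{-1}(\varepsilon^2\zeta^\varepsilon/\delta(\varepsilon))$ on $\operatorname{supp}\zeta^\varepsilon$ and the $o_\varepsilon(1)$-smallness of $\mathcal{K}\zeta^\varepsilon$, this forces $\mu^\varepsilon=\max_{\bar D}q+o_\varepsilon(1)$.

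The shrinking of $\operatorname{supp}\zeta^\varepsilon$ to $\mathcal{M}$ is then a concentration-compactness statement at the level of the linear functional $\int q\zeta\,d\nu$: if a positive fraction of mass of $\zeta^\varepsilon$ stayed in the complement of $\mathcal{M}_l$, then $\int q\zeta^\varepsilon\,d\nu/\delta\le \kappa_0\max_{\bar D}q-c_l$ for some $c_l>0$ (by continuity of $q$ and compactness of $\bar D\setminus\mathcal{M}_l$), contradicting the matching of upper and lower bounds established above. Standard $L^p$ and $W^{2,p}_{\mathrm{loc}}$ estimates, together with $\zeta^\varepsilon\in L^\infty$ with fixed essential support diameter tending to zero, complete (ii).

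The main obstacle is doing all of this \emph{without} assumption (H2): the penalty term $(\varepsilon^2/\delta)\int J(\zeta^\varepsilon)d\nu$ cannot be controlled by the other terms via the usual concavity argument of Turkington. The way around this is that the sharper vanishing $\delta(\varepsilon)=o(1/\ln\frac{1}{\varepsilon})$ makes the penalty a lower-order contribution automatically: since $\zeta^\varepsilon\le\Lambda\delta/\varepsilon^2$ and, by the diameter estimate, is supported on a set of $\nu$-measure $O(\varepsilon^2)$, the crude bound $\int J(\zeta^\varepsilon)d\nu\le J(\Lambda\delta/\varepsilon^2)\cdot O(\varepsilon^2)$ already gives $(\varepsilon^2/\delta)\int J(\zeta^\varepsilon)d\nu=o(\delta)$ regardless of the profile of $f$. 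Making this circular-looking estimate genuinely non-circular — deriving the measure estimate on $\operatorname{supp}\zeta^\varepsilon$ prior to (or simultaneously with) the matching of energy bounds — is the technical point that needs to be handled carefully, and it is the reason (H2) is dispensable in this regime.
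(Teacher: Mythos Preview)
Your setup (the maximization in $\mathcal{A}_\varepsilon$, the Euler--Lagrange form, and the bathtub bound $\|\mathcal{K}\zeta^\varepsilon\|_\infty=O(\delta(\varepsilon)\ln\frac{1}{\varepsilon})=o_\varepsilon(1)$) matches the paper and is correct. The genuine gap is in the lower bound $\mu^\varepsilon\ge\max_{\bar D}q-o_\varepsilon(1)$. Your route is to match upper and lower energy bounds, conclude that the penalty is $o(\delta)$, and then extract $\mu^\varepsilon$ from the identity $\psi^\varepsilon=f^{-1}(\varepsilon^2\zeta^\varepsilon/\delta)$ on the support. But without (H2) the bound $\mathcal{F}_\varepsilon(\zeta^\varepsilon)=o(\delta)$ says nothing about $\int\zeta^\varepsilon f^{-1}(\varepsilon^2\zeta^\varepsilon/\delta)\,d\nu$: one only has $F_*(s)\le sf^{-1}(s)$, and the reverse inequality is exactly (H2$'$). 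Integrating the identity against $\zeta^\varepsilon$ therefore yields only $\mu^\varepsilon\ge\max_{\bar D}q-f^{-1}(\Lambda)-o(1)$, which is useless. Your proposed rescue via a support--measure estimate is, as you note, circular; it is also ineffective even if granted, since $|\operatorname{supp}\zeta^\varepsilon|=O(\varepsilon^2)$ gives the crude bound $\frac{\delta}{\varepsilon^2}F_*(\Lambda)\cdot O(\varepsilon^2)=O(\delta)$, not $o(\delta)$. In fact no diameter estimate is claimed (or obtainable by the Turkington scheme) in this regime, so this line cannot be made to work.

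The paper bypasses energy entirely for this step and argues directly from the circulation constraint: if along a subsequence $\mu^\varepsilon\le\max_{\bar D}q-a$, then on a fixed neighbourhood $\mathcal{M}_{l_1}$ of $\mathcal{M}$ one has $\psi^\varepsilon=\mathcal{K}\zeta^\varepsilon+q-\mu^\varepsilon\ge a/4$ for small $\varepsilon$, hence $\zeta^\varepsilon\ge(\delta/\varepsilon^2)f(a/4)$ there, and $\int_D\zeta^\varepsilon\,d\nu\ge(\delta/\varepsilon^2)f(a/4)\,|\mathcal{M}_{l_1}|\to\infty$, contradicting $\int\zeta^\varepsilon\,d\nu=\kappa_0\delta$. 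This uses only (H1), and explains transparently why (H2) is dispensable here. Once $\mu^\varepsilon=\max_{\bar D}q+o(1)$ and $\mathcal{K}\zeta^\varepsilon=o(1)$ are both in hand, the \emph{full support containment} (not merely the mass concentration your energy argument would yield) is immediate: for any $x\in\operatorname{supp}\zeta^\varepsilon$ one has $q(x)\ge\mu^\varepsilon-\mathcal{K}\zeta^\varepsilon(x)=\max_{\bar D}q-o(1)$, forcing $x\in\mathcal{M}_l$.
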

This paper is organized as follows. In section 2, we studied the variational problem and give the existence and form of solutions. In section 3, we give the proof of Theorem \ref{thm1}. Since the proof is similar to the case $\delta(\varepsilon)=1$, we refer to \cite{CZZ} for more details. In section 4, we give the proof of Theorem \ref{thm2}, in which $\delta(\varepsilon)$ is of critical value. In Section 5, the proof of Theorem \ref{thm3} is presented, which is a general version of the result in \cite{CW3}.

We also bring to the attention of the reader that there is a similar situation with similar results for three-dimensional axisymmetric incompressible inviscid flows where $b=r$, see \cite{AS} \cite{BF1} with stream-function methods and \cite{CWZ} \cite{FT} with vorticity methods. However there is not any classification result (about $\delta(\varepsilon)$) for three-dimensional axisymmetric incompressible inviscid flows yet.

\section{Variational problem}
Denote $f^{-1}$ the inverse function of $f$ in $[0,+\infty)$ and $f^{-1}\equiv0$ in $(-\infty,f(0^+)]$ and denote $F_*(s):=\int_0^sf^{-1}(r)dr$ as the primitive function of $f^{-1}$, one can varifies assumption $(\text{H2})$ is equivalent to 

$(\text{H2}')$ There exists $\vartheta_1\in(0,1)$ such that $$F_*(s)\ge\vartheta_1sf^{-1}(s), \ \ \ \forall s\ge0.$$
Let $\kappa_0>0$ be a fixed number. Define
\begin{equation*}
\mathcal{A}_{\varepsilon,\Lambda}:=\{\zeta\in L^\infty(D)~|~ 0\le \zeta \le \frac{\Lambda\delta(\varepsilon)}{\varepsilon^2}~ \mbox{ a.e. in }D, \int_{D}\zeta(x)d\nu (x)=\kappa_0\delta(\varepsilon) \},
\end{equation*}
It is obvious that if we take $\Lambda>f(0^+)+1$ be a positive number and $\varepsilon$ be sufficiently small, $\mathcal{A}_{\varepsilon,\Lambda}$ is not empty.
Consider the maximization problem of the following functional over $\mathcal{A}_{\varepsilon,\Lambda}$
$$\mathcal{E}(\zeta)=E_q(\zeta)-\mathcal{F}_\varepsilon(\zeta) \ \ \ \zeta\in\mathcal{A}_{\varepsilon,\Lambda},$$
where 
$$E_q(\zeta)=\frac{1}{2}\int_D \zeta(x)\mathcal{K}\zeta(x)d\nu(x)+\int_D q\zeta d\nu(x),$$ 
$$ \mathcal{F}_\varepsilon(\zeta)=\frac{\delta(\varepsilon)}{\varepsilon^2}\int_D F_*(\frac{\varepsilon^2}{\delta(\varepsilon)}\zeta(x))d\nu(x).$$
By our assumptions on $F_*$ and $f^{-1}$, $\mathcal{F}_\varepsilon(\zeta)$ is a convex functional over $\mathcal{A}_{\varepsilon,\Lambda}$.
The following Lemma shows an absolute maximum for $\mathcal{E}$ over $\mathcal{A}_{\varepsilon,\Lambda}$  can be easily found.

\begin{lemma}\label{lem2-1}
	$\mathcal{E}$ is bounded from above and attains its maximum value over $\mathcal{A}_{\varepsilon,\Lambda}$.
\end{lemma}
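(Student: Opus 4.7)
The plan is to establish the two assertions separately by standard direct-method arguments, exploiting the fact that the admissible set is $L^\infty$-bounded, convex, and closed, while the obstruction term $\mathcal{F}_\varepsilon$ is convex.

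For the upper bound, I would first observe that since $b\in C^\alpha(\bar D)$ and $q\in C^1(\bar D)$ are bounded, the linear term satisfies
\[
\int_D q\zeta\, d\nu\le \|q\|_{L^\infty(D)}\kappa_0\delta(\varepsilon).
\]
For the quadratic term, I would use the kernel representation of $\mathcal{K}$ from Definition \ref{def2}: because $G$ is weakly singular (logarithmic) and $R$ is bounded, for every $\zeta\in\mathcal{A}_{\varepsilon,\Lambda}$ one has $\mathcal{K}\zeta\in L^\infty(D)$ with a bound depending only on $\|\zeta\|_{L^\infty}$ and $\|\zeta\|_{L^1(d\nu)}$, hence on $\Lambda$, $\delta(\varepsilon)$, $\varepsilon$ and $\kappa_0$. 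Together with $\mathcal{F}_\varepsilon(\zeta)\ge0$ (since $F_*$ is nonnegative on $[0,\infty)$), this yields $\sup_{\mathcal{A}_{\varepsilon,\Lambda}}\mathcal{E}<+\infty$.

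For the existence of a maximizer, I would apply the direct method. Let $\{\zeta_n\}\subset\mathcal{A}_{\varepsilon,\Lambda}$ be a maximizing sequence. Since $\|\zeta_n\|_{L^\infty}\le\Lambda\delta(\varepsilon)/\varepsilon^2$, by Banach--Alaoglu we can extract a subsequence (still denoted $\zeta_n$) converging weakly-$\ast$ in $L^\infty(D)$, and in particular weakly in $L^2(D,d\nu)$, to some $\zeta^\ast$. The admissible set $\mathcal{A}_{\varepsilon,\Lambda}$ is convex and closed in $L^2(D,d\nu)$ (the pointwise bounds $0\le\zeta\le\Lambda\delta(\varepsilon)/\varepsilon^2$ are preserved under weak convergence via Mazur's lemma, and the mass constraint is linear and continuous), so $\zeta^\ast\in\mathcal{A}_{\varepsilon,\Lambda}$.

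The crucial step is showing $\mathcal{E}(\zeta^\ast)\ge\limsup_n\mathcal{E}(\zeta_n)$. The linear term $\int_D q\zeta\, d\nu$ passes to the limit trivially. For the quadratic term, using the kernel representation, the operator $\zeta\mapsto\mathcal{K}\zeta$ maps the uniformly bounded sequence $\{\zeta_n\}$ into a sequence that is strongly convergent in $L^2(D,d\nu)$ to $\mathcal{K}\zeta^\ast$, by a standard compactness argument for integral operators with weakly singular kernels (essentially equicontinuity in $L^2$ plus Arzel\`a--Ascoli type reasoning, or the compact embedding $H(D)\hookrightarrow L^2(D,d\nu)$). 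Hence
\[
\int_D \zeta_n\,\mathcal{K}\zeta_n\, d\nu\longrightarrow \int_D \zeta^\ast\,\mathcal{K}\zeta^\ast\, d\nu
\]
by weak$\times$strong convergence. Finally, $\mathcal{F}_\varepsilon$ is a convex integral functional and is therefore weakly lower semicontinuous on $L^2(D,d\nu)$, giving $\liminf_n\mathcal{F}_\varepsilon(\zeta_n)\ge\mathcal{F}_\varepsilon(\zeta^\ast)$. Combining these, $\mathcal{E}(\zeta^\ast)\ge\limsup_n\mathcal{E}(\zeta_n)=\sup_{\mathcal{A}_{\varepsilon,\Lambda}}\mathcal{E}$, so the maximum is attained.

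The main obstacle I anticipate is verifying the strong $L^2$-convergence of $\mathcal{K}\zeta_n$; this is where the specific structure of the lake operator enters, and one must invoke the kernel representation from Definition \ref{def2} (the $b\cdot G$ part and the bounded remainder $R$ both define compact operators on $L^2(D,d\nu)$). All other pieces are soft functional-analytic facts.
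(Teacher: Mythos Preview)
Your proposal is correct and follows essentially the same approach as the paper: both use the direct method with weak-$\ast$ compactness in $L^\infty$, continuity of $E_q$ along the maximizing sequence, and weak lower semicontinuity of the convex term $\mathcal{F}_\varepsilon$. The only cosmetic difference is that the paper handles the quadratic term in one line by citing $G\in L^1(D\times D)$, whereas you spell out the underlying compactness mechanism (strong $L^2$-convergence of $\mathcal{K}\zeta_n$ via the weakly singular kernel and bounded remainder $R$); this is exactly what makes the paper's terse argument work, so your version is simply a more explicit rendering of the same idea.
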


\begin{proof}
	Since $G(x,y)\in L^1(D\times D)$, $q\in C^2(D)\cap C^1(\bar{D})$, and $R(x,y)\in L^\infty(D\times D)$, we have $$E_q(\zeta)\leq \frac{(\Lambda\delta(\varepsilon))^2(\text{max}_{\bar{D}} b)^3}{2\varepsilon^4}||G||_{L^1(D\times D)}+\kappa_0\delta(\varepsilon)\text{max}_{\bar{D}}q+(\kappa_0\delta(\varepsilon))^2||R||_{L^\infty(D\times D)},\,\,\forall\,\zeta\in \mathcal{A}_{\varepsilon,\Lambda}.$$
	For $\mathcal{F}_\varepsilon$ , by H\"older's inequality we have \[|\mathcal{F}_\varepsilon|\leq \frac{1}{\varepsilon^2}F(\Lambda)|D|,\ \forall\,\zeta\in \mathcal{A}_{\varepsilon,\Lambda}.\]
	Therefore $\mathcal{E}$ is bounded from above over $\mathcal{A}_{\varepsilon,\Lambda}$.
	Let $\{\zeta_{j}\}\subset \mathcal{A}_{\varepsilon,\Lambda}$ be a sequence such that as $j\to +\infty$ 
	$$\mathcal{E}(\zeta_{j}) \to \sup_{\zeta\in \mathcal{A}_{\varepsilon,\Lambda}}\mathcal{E}({\zeta}).$$
    Thus we may assume, up to a subsequence, that $\zeta_j\to\zeta_0$ weakly star in $L^\infty(D)$ as $j\to\infty$ for some $\zeta_0\in \mathcal{A}_{\varepsilon,\Lambda}$.
	It suffices to prove
	\[\mathcal{E}(\zeta_0)\geq\limsup_{j\to\infty}\mathcal{E}(\zeta_j).\]
	Since $G(x,y)\in L^{1}(D\times D)$ and $q\in L^1(D)$, we have
	\begin{equation}\label{2-1}
	\lim_{j\to\infty}E_q(\zeta_j)=E_q(\zeta_0).
	\end{equation}
	According to $(\text{H1})$, $\mathcal{F}_\varepsilon$ is a continuous convex functional. Thus it is also weakly lower semicontinuous and one has
	\begin{equation}\label{2-2}
	\liminf_{j\to +\infty} \mathcal{F}_\varepsilon(\zeta_j)\ge \mathcal{F}_\varepsilon(\zeta_0).
	\end{equation}
	Combining \eqref{2-1} and \eqref{2-2} we get the desired result.
	
\end{proof}

The next lemma gives the profile of maximizer of $\mathcal{E}$ over $\mathcal{A}_{\varepsilon,\Lambda}$.

\begin{lemma}\label{lem2-2}
	Let $\zeta^{\varepsilon,\Lambda}$ be a maximizer of $\mathcal{E}$ over $\mathcal{A}_{\varepsilon,\Lambda}$. Then there exists some $\mu^{\varepsilon,\Lambda}$ such that
	\begin{equation}\label{2-3}
	\zeta^{\varepsilon,\Lambda}=\frac{\delta(\varepsilon)}{\varepsilon^2}f(\psi^{\varepsilon,\Lambda}){\chi}_{\{x\in D \mid 0<\psi^{\varepsilon,\Lambda}(x)<f^{-1}(\Lambda)\}}+\frac{\Lambda\delta(\varepsilon)}{\varepsilon^2}\chi_{_{\{x\in D\mid\psi^{\varepsilon,\Lambda}(x) \geq f^{-1}(\Lambda)\}}} \ \ \mbox{ a.e. in }  D,
	\end{equation}
	where
	\begin{equation}\label{2-4}
	\psi^{\varepsilon,\Lambda}:=\mathcal{K}\zeta^{\varepsilon,\Lambda}+q-\mu^{\varepsilon,\Lambda}.
	\end{equation}
	Moreover, when $\varepsilon$ is sufficiently small, $\mu^{\varepsilon,\Lambda}$ has the following lower bound
	\begin{equation}\label{2-5}
	\mu^{\varepsilon,\Lambda} \ge -f^{-1}(f(0^+)+1)+\text{min}_{\bar{D}}q-1.
	\end{equation}
\end{lemma}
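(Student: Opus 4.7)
The plan is the classical Lagrange multiplier / ``bathtub'' argument for Turkington-type variational problems. First, I would compute the first variation of $\mathcal{E}$ at $\zeta^{\varepsilon,\Lambda}$ along an admissible direction $\eta$. Using self-adjointness of $\mathcal{K}$ on $L^2(D,d\nu)$ (which follows from Definition \ref{def1}) and $F_*'=f^{-1}$, one obtains
\begin{equation*}
\left.\frac{d}{dt}\mathcal{E}\bigl(\zeta^{\varepsilon,\Lambda}+t\eta\bigr)\right|_{t=0^+} = \int_D \eta(x)\,\Bigl[\mathcal{K}\zeta^{\varepsilon,\Lambda}(x) + q(x) - f^{-1}\!\Bigl(\tfrac{\varepsilon^2}{\delta(\varepsilon)}\zeta^{\varepsilon,\Lambda}(x)\Bigr)\Bigr]\,d\nu(x).
\end{equation*}

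Second, I would convert maximality into a pointwise Euler--Lagrange condition via a two-point exchange. For Lebesgue points $x\in\{\zeta^{\varepsilon,\Lambda}<\Lambda\delta(\varepsilon)/\varepsilon^2\}$, $y\in\{\zeta^{\varepsilon,\Lambda}>0\}$ and $\nu$-equimeasurable shrinking neighborhoods $U\ni x$, $V\ni y$, the test direction $\eta=\chi_U-\chi_V$ preserves the mass constraint and keeps $\zeta^{\varepsilon,\Lambda}+t\eta\in\mathcal{A}_{\varepsilon,\Lambda}$ for $t>0$ small. Maximality forces the first variation to be non-positive; shrinking $U,V$ to points and invoking Lebesgue differentiation yields a constant $\mu^{\varepsilon,\Lambda}$ such that a.e.\ in $D$
\begin{equation*}
\mathcal{K}\zeta^{\varepsilon,\Lambda}+q-f^{-1}\!\Bigl(\tfrac{\varepsilon^2}{\delta(\varepsilon)}\zeta^{\varepsilon,\Lambda}\Bigr)
\begin{cases}
=\mu^{\varepsilon,\Lambda} & \text{on } \{0<\zeta^{\varepsilon,\Lambda}<\Lambda\delta(\varepsilon)/\varepsilon^2\},\\
\le\mu^{\varepsilon,\Lambda} & \text{on } \{\zeta^{\varepsilon,\Lambda}=0\},\\
\ge\mu^{\varepsilon,\Lambda} & \text{on } \{\zeta^{\varepsilon,\Lambda}=\Lambda\delta(\varepsilon)/\varepsilon^2\}.
\end{cases}
\end{equation*}
Setting $\psi^{\varepsilon,\Lambda}:=\mathcal{K}\zeta^{\varepsilon,\Lambda}+q-\mu^{\varepsilon,\Lambda}$ and using that $f$ is strictly increasing on $[0,\infty)$ and vanishes on $(-\infty,0]$, applying $f$ in each branch gives the three regimes of \eqref{2-3}: on the middle set, $\psi^{\varepsilon,\Lambda}=f^{-1}(\tfrac{\varepsilon^2}{\delta(\varepsilon)}\zeta^{\varepsilon,\Lambda})\in(0,f^{-1}(\Lambda))$, hence $\zeta^{\varepsilon,\Lambda}=\tfrac{\delta(\varepsilon)}{\varepsilon^2}f(\psi^{\varepsilon,\Lambda})$; on the saturated set, $\psi^{\varepsilon,\Lambda}\ge f^{-1}(\Lambda)$; and $\zeta^{\varepsilon,\Lambda}>0\Rightarrow\psi^{\varepsilon,\Lambda}\ge 0$ (since $f^{-1}\ge 0$), so both indicators in \eqref{2-3} vanish where $\psi^{\varepsilon,\Lambda}<0$ and $\zeta^{\varepsilon,\Lambda}=0$ there, making \eqref{2-3} valid a.e.

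For the lower bound \eqref{2-5}, I would argue by contradiction using the kernel representation of Definition \ref{def2}. Non-negativity of $G$ and of $\zeta^{\varepsilon,\Lambda}$, boundedness of $R$, and the mass constraint yield
\begin{equation*}
\mathcal{K}\zeta^{\varepsilon,\Lambda}(x)\ge -\|R\|_{L^\infty(D\times D)}\kappa_0\delta(\varepsilon)\ge -\tfrac{1}{2}
\end{equation*}
for $\varepsilon$ sufficiently small. If \eqref{2-5} failed, then $\psi^{\varepsilon,\Lambda}>f^{-1}(f(0^+)+1)+\tfrac{1}{2}$ throughout $D$, so strict monotonicity of $f$ gives $f(\psi^{\varepsilon,\Lambda})>f(0^+)+1$ a.e.; combined with $\Lambda>f(0^+)+1$ and \eqref{2-3}, this forces $\zeta^{\varepsilon,\Lambda}\ge\tfrac{\delta(\varepsilon)}{\varepsilon^2}(f(0^+)+1)$ a.e., whence
\begin{equation*}
\int_D\zeta^{\varepsilon,\Lambda}\,d\nu\ge\tfrac{\delta(\varepsilon)}{\varepsilon^2}(f(0^+)+1)|D|\gg\kappa_0\delta(\varepsilon),
\end{equation*}
contradicting $\int_D\zeta^{\varepsilon,\Lambda}\,d\nu=\kappa_0\delta(\varepsilon)$. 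The main technical point is the rigorous passage from the variational inequality to the pointwise Euler--Lagrange formulation via the two-point exchange and careful measure-theoretic handling of the boundary level sets $\{\psi^{\varepsilon,\Lambda}=0\}$ and $\{\psi^{\varepsilon,\Lambda}=f^{-1}(\Lambda)\}$; the remaining computations are algebraic bookkeeping.
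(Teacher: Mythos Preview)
Your proposal is correct and follows essentially the same route as the paper: compute the first variation along admissible competitors, derive the three-branch Euler--Lagrange condition with multiplier $\mu^{\varepsilon,\Lambda}$, read off \eqref{2-3}, and then prove \eqref{2-5} by contradiction via the kernel decomposition $\mathcal{K}\zeta=bG\ast\zeta+R\ast\zeta$ and the mass constraint. The only cosmetic difference is that the paper tests against convex combinations $\zeta_s=\zeta^{\varepsilon,\Lambda}+s(\zeta-\zeta^{\varepsilon,\Lambda})$ and then invokes the bathtub principle (Lieb--Loss) to obtain the pointwise conditions and an explicit formula for $\mu^{\varepsilon,\Lambda}$, whereas you reach the same conclusion by the equivalent two-point exchange argument.
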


\begin{proof}
	We take a family of test functions as follows
	\begin{equation*}
	\zeta_{s}=\zeta^{\varepsilon,\Lambda}+s({\zeta}-\zeta^{\varepsilon,\Lambda}),\ \ \ s\in[0,1],
	\end{equation*}
	where ${\zeta}$ is an arbitrary element of $\mathcal{A}_{\varepsilon,\Lambda}$. Since $\zeta^{\varepsilon,\Lambda}$ is a maximizer, we have
	\begin{equation*}
	0\ge\frac{d\mathcal E(\zeta_{s})}{ds}\bigg|_{s=0^+}
	=\int_{D}({\zeta}-\zeta^{\varepsilon,\Lambda})\big(\mathcal{K}\zeta^{\varepsilon,\Lambda}+q-f^{-1}(\frac{\delta(\varepsilon)}{\varepsilon^2}\zeta^{\varepsilon,\Lambda}))d\nu(x),
	\end{equation*}
	
	that is,
	\begin{equation*}
	\int_{D}\zeta^{\varepsilon,\Lambda}\big(\mathcal{K}\zeta^{\varepsilon,\Lambda}+q-f^{-1}(\frac{\delta(\varepsilon)}{\varepsilon^2}\zeta^{\varepsilon,\Lambda}))d\nu(x)\ge \int_{D}{\zeta}\big(\mathcal{K}\zeta^{\varepsilon,\Lambda}+q-f^{-1}(\frac{\delta(\varepsilon)}{\varepsilon^2}\zeta^{\varepsilon,\Lambda}))d\nu(x).
	\end{equation*}
	for all ${\zeta}\in \mathcal{A}_{\varepsilon,\Lambda}.$
	By an adaptation of the bathtub principle (see Lieb--Loss \cite{LL}, \S 1.14) we obtain
	\begin{equation}\label{2-6}
	\begin{split}
	\mathcal{K}\zeta^{\varepsilon,\Lambda}+q-\mu^{\varepsilon,\Lambda} &\ge f^{-1}(\frac{\varepsilon^2}{\delta(\varepsilon)}\zeta^{\varepsilon,\Lambda}) \ \ \  \mbox{whenever}\  \zeta^{\varepsilon,\Lambda}=\frac{\Lambda\delta(\varepsilon)}{\varepsilon^2}, \\
	\mathcal{K}\zeta^{\varepsilon,\Lambda}+q-\mu^{\varepsilon,\Lambda} &= f^{-1}(\frac{\varepsilon^2}{\delta(\varepsilon)}\zeta^{\varepsilon,\Lambda})\ \ \  \mbox{whenever}\  0<\zeta^{\varepsilon,\Lambda}<\frac{\Lambda\delta(\varepsilon)}{\varepsilon^2}, \\
	\mathcal{K}\zeta^{\varepsilon,\Lambda}+q-\mu^{\varepsilon,\Lambda} &\le f^{-1}(\frac{\varepsilon^2}{\delta(\varepsilon)}\zeta^{\varepsilon,\Lambda}) \ \ \  \mbox{whenever}\  \zeta^{\varepsilon,\Lambda}=0,
	\end{split}
	\end{equation}
	where $\mu^{\varepsilon,\Lambda}$ is a real number determined by
	$$\mu^{\varepsilon,\Lambda}=\inf\{s\in\mathbb R\mid|\{x\in D\mid\mathcal{K}\zeta^{\varepsilon,\Lambda}+q-f^{-1}(\frac{\varepsilon^2}{\delta(\varepsilon)}\zeta^{\varepsilon,\Lambda})>s\}|\le \frac{\kappa_0\varepsilon^2}{\Lambda}\}.$$
	Now the desired form $\eqref{2-3}$ follows immediately.
	
	Next we prove \eqref{2-5}. Suppose not, as $\varepsilon\to 0^+$ for $x\in D$ there holds
	\begin{equation*}
	\begin{split}
	\psi^{\varepsilon,\Lambda}(x)&=\mathcal{K}\zeta^{\varepsilon,\Lambda}(x)+q(x)-\mu^{\varepsilon,\Lambda}\\
	&=b(x)\int_D G(x,y)\zeta^{\varepsilon,\Lambda}d\nu(y)+\int_D R(x,y)\zeta^{\varepsilon,\Lambda}d\nu(y)+q(x)-\mu^{\varepsilon,\Lambda}\\
	&\ge(\int_D R(x,y)\zeta^{\varepsilon,\Lambda}d\nu(y)+1)+(q-\text{min}_{\bar{D}}q)+f^{-1}(f(0^+)+1)\\
	&\ge f^{-1}(f(0^+)+1),	
	\end{split}
	\end{equation*}
	which implies $\zeta^{\varepsilon,\Lambda}\ge\frac{(f(0^+)+1)\delta(\varepsilon)}{\varepsilon^2}\chi_D$. Thus $\int_{D}\zeta(x)d\nu (x)=\frac{(f(0^+)+1)\delta(\varepsilon)}{\varepsilon^2}|D|>\kappa_0\delta(\varepsilon)$ when $\varepsilon$ is sufficiently small, which leads to a contradiction. Thus the proof is completed.
\end{proof}	

The following Lemma is a variant of the result in \cite{B}.
\begin{lemma}\label{lem2-3}
	Let $(\psi,\zeta)\in W^{2,p}_{loc}(D)\times L^\infty(D)$ for some $p>1$ satisfying $\mathcal{L}\psi=\zeta\ \ a.e \ \ in\ D$. Suppose that $\zeta=f(\psi)\ \ a.e\ \ in\ D$, for some monotonic function $f$. Then $div(\zeta\nabla^{\perp}\psi)=0$ as a distribution. 
\end{lemma}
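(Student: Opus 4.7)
The plan is to reduce the claim to the elementary fact that a rotated gradient is divergence-free. Formally, if $F$ is a primitive of $f$, the chain rule gives $\nabla F(\psi) = f(\psi)\nabla\psi = \zeta\nabla\psi$, hence $\zeta\nabla^\perp\psi = (\nabla F(\psi))^\perp$; and $\text{div}\bigl((\nabla g)^\perp\bigr) = \partial_1\partial_2 g - \partial_2\partial_1 g$ vanishes distributionally for any Sobolev scalar $g$. Thus the conclusion would follow at once once the chain rule is justified. The real work is to make this rigorous when $f$ is merely monotone and possibly discontinuous.

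First I would note that in two dimensions $W^{2,p}_{\mathrm{loc}}(D) \hookrightarrow C^{0,\alpha}_{\mathrm{loc}}(D)$ with $\nabla\psi \in L^q_{\mathrm{loc}}(D)$ for some $q>2$ when $p>1$, so $\psi$ is continuous. Fix $\varphi \in C^\infty_c(D)$ and set $K := \mathrm{supp}\,\varphi$; on $K$ the values of $\psi$ lie in some bounded interval $[m,M]$ on which the monotone function $f$ is bounded. I would then construct smooth monotone approximations $f_n \in C^1(\mathbb{R})$ of $f$ by mollification (truncating $f$ outside $[m,M]$ if needed), with $\sup_n \|f_n\|_{L^\infty([m,M])} < \infty$ and $f_n(s)\to f(s)$ at every continuity point of $f$. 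With $F_n(s):=\int_0^s f_n(t)\,dt$, the classical chain rule yields $\nabla F_n(\psi) = f_n(\psi)\nabla\psi$ a.e., and hence
\begin{equation*}
\int_D f_n(\psi)\,\nabla^\perp\psi\cdot\nabla\varphi\,dx \;=\; \int_D (\nabla F_n(\psi))^\perp\cdot\nabla\varphi\,dx \;=\; 0,
\end{equation*}
since rotated gradients of $W^{1,1}_{\mathrm{loc}}$ functions are divergence-free in the distributional sense (verified by mollifying $F_n(\psi)$ and using the classical $\partial_1\partial_2-\partial_2\partial_1=0$).

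It remains to pass to the limit $n\to\infty$. The monotone function $f$ has an at most countable discontinuity set $\{c_k\}$, and on each level set $\{\psi=c_k\}$ one has $\nabla\psi=0$ almost everywhere by the standard Sobolev-function property of preimages of single points. Consequently $f_n(\psi)\nabla^\perp\psi \to f(\psi)\nabla^\perp\psi = \zeta\nabla^\perp\psi$ pointwise a.e.\ on $D$, and on $K$ the sequence is dominated by $\bigl(\sup_n\|f_n\|_{L^\infty([m,M])}\bigr)|\nabla\psi|\in L^2(K)$. Dominated convergence then gives
\begin{equation*}
\int_D \zeta\,\nabla^\perp\psi\cdot\nabla\varphi\,dx \;=\; 0 \qquad \text{for every } \varphi\in C^\infty_c(D),
\end{equation*}
which is precisely $\text{div}(\zeta\nabla^\perp\psi)=0$ as a distribution. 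The main obstacle is the lack of smoothness of $f$; monotonicity handles it cleanly by restricting the discontinuities to a countable set and by supplying the uniform $L^\infty$ control needed to pass to the limit.
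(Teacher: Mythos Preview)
Your proof is correct and follows the standard route for this kind of result: approximate the monotone $f$ by smooth $f_n$, write $f_n(\psi)\nabla^\perp\psi$ as a rotated gradient via the chain rule (hence divergence-free), and pass to the limit by dominated convergence, the countable discontinuity set of $f$ being harmless since $\nabla\psi=0$ a.e.\ on each level set. The paper itself omits the proof and simply refers to Lemma~3.4 of \cite{CWZ}, so no finer comparison with the authors' own argument is possible from the text at hand.
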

\begin{proof}
	The proof is the same as Lemma 3.4 of \cite{CWZ} and we omit it.
\end{proof}

Then we turn to analyze the limiting behavior of $\zeta^{\varepsilon,\Lambda}$ with different $\delta(\varepsilon)$ as $\varepsilon$ goes to zero.

\section{Proof of Theorem \ref{thm1}}
In Theorem \ref{thm1}, we denote $\mathcal{S}=\{x\in\bar{D}\ |\ b(x)=\text{max}_{\bar{D}} b\}$ and consider the vanishing rate $\delta(\varepsilon)$ with following properties
\begin{itemize}
	\item[(a)] If $\mathcal{S}\cap D\not=\varnothing$, there holds $\delta(\varepsilon)=o_\varepsilon(1)$ and $(\ln\frac{1}{\varepsilon})^{-1}/\delta(\varepsilon)=o_\varepsilon(1)$.
	\item[(b)] If $\mathcal{S}\subset \partial D$, there holds $\delta(\varepsilon)=o_\varepsilon(1)$ and $(\ln\ln\frac{1}{\varepsilon}/\ln\frac{1}{\varepsilon})/\delta(\varepsilon)=o_\varepsilon(1)$.
\end{itemize}
To simplify our proof, we denote $$H(x,y):=\frac{1}{2\pi}\ln\frac{diam(D)}{|x-y|}-G(x,y).$$
First, we give following estimate for $H$, which is required in the further analysis (see \cite{D2}). 
\begin{lemma}\label{lem3-1}
	For all $x,y\in D$, we have
	\begin{equation*}
	\begin{split}
	\frac{1}{2\pi}\ln\frac{diam(D)}{\text{max}\{|x-y|,dist(x,\partial D),dist(y,\partial D)\}}&\ge H(x,y)\\
	&\ge\frac{1}{2\pi}\ln\frac{diam(D)}{|x+y|+2\text{max}\{dist(x,\partial D),dist(y,\partial D)\}}.
	\end{split}
	\end{equation*}
\end{lemma}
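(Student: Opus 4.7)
The plan is to rewrite the identity $G(x,y) = \frac{1}{2\pi}\ln\frac{1}{|x-y|} + h(x,y)$, where $h(\cdot,y)$ denotes the regular part of the Dirichlet Green's function, i.e., the harmonic function on $D$ with boundary values $h(z,y) = \frac{1}{2\pi}\ln|z-y|$ for $z\in\partial D$. With this notation, $H(x,y) = \frac{1}{2\pi}\ln diam(D) - h(x,y)$, so both inequalities reduce to two-sided estimates on $h$ obtained through the maximum principle applied to suitable comparison functions.

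For the upper bound on $H$, I would first note that $G\ge 0$ gives $H(x,y)\le\frac{1}{2\pi}\ln\frac{diam(D)}{|x-y|}$ immediately. Next, since $|z-y|\ge dist(y,\partial D)$ for every $z\in\partial D$, the harmonic function $h(\cdot,y)$ has boundary values bounded below by $\frac{1}{2\pi}\ln dist(y,\partial D)$, so the minimum principle yields $h(x,y)\ge\frac{1}{2\pi}\ln dist(y,\partial D)$, i.e., $H(x,y)\le\frac{1}{2\pi}\ln\frac{diam(D)}{dist(y,\partial D)}$. Interchanging the roles of $x$ and $y$ via the symmetry $h(x,y)=h(y,x)$ yields the analogous estimate with $dist(x,\partial D)$, and taking the smallest of these three produces the claimed upper bound.

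For the lower bound on $H$, which is equivalent to an upper estimate on $h$, the plan is to construct a harmonic majorant of $h(\cdot,y)$ via reflection. Assume without loss of generality that $d_y:=dist(y,\partial D)\ge dist(x,\partial D)$, let $z_y\in\partial D$ realize this distance, and set $y^*:=2z_y-y$, so that $|y^*-y|=2d_y$ and (for sufficiently regular boundary) $y^*\notin\overline D$. The auxiliary function $\Phi(z):=\frac{1}{2\pi}\ln|z-y^*|$ is harmonic on $D$ and, by a geometric argument using that $D$ lies on the $y$-side of the perpendicular bisector of the segment from $y$ to $y^*$, one checks that $\Phi(z)\ge h(z,y)$ on $\partial D$. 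The maximum principle then gives $h(x,y)\le\frac{1}{2\pi}\ln|x-y^*|$, and the stated bound will follow once $|x-y^*|$ is controlled by $|x+y|+2d_y$ through the triangle inequality combined with an appropriate choice of reflected point.

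The main obstacle is the reflection step in the lower bound: ensuring $|z-y^*|\ge|z-y|$ for every $z\in\partial D$ requires either convexity or sufficient local regularity of $\partial D$ at $z_y$. For a general continuous regular lake one must adapt Dekeyser's construction in \cite{D2}, e.g.\ by using a regularized distance or an inscribed-ball reflection, which is precisely how the additive factor $2\max\{dist(x,\partial D),dist(y,\partial D)\}$ enters in its full generality.
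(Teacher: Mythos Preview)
The paper does not actually prove this lemma: immediately before the statement it writes ``we give following estimate for $H$, which is required in the further analysis (see \cite{D2})'' and then moves on. So the paper's ``proof'' is simply a citation to Dekeyser \cite{D2}, and there is nothing further to compare your argument against.

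Your treatment of the upper bound is correct and complete: the bound by $\frac{1}{2\pi}\ln\frac{diam(D)}{|x-y|}$ is just $G\ge 0$, and the two distance bounds follow from the minimum principle applied to the regular part $h(\cdot,y)$, together with the symmetry $h(x,y)=h(y,x)$.

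For the lower bound, your reflection idea is exactly the right heuristic, and you are honest about its limitation: the comparison $|z-y^*|\ge |z-y|$ for all $z\in\partial D$ is equivalent to $D$ lying on one side of the tangent line at $z_y$, which is a convexity-type condition that a general Lipschitz domain does not satisfy. Since you already point to Dekeyser \cite{D2} for the general case, your proposal ends up in the same place as the paper itself. One remark that may clear up your last paragraph: the quantity $|x+y|$ in the stated lower bound is not translation-invariant and is almost certainly a misprint for $|x-y|$. With $|x-y|$ in place, the triangle inequality $|x-y^*|\le |x-y|+|y-y^*|=|x-y|+2d_y$ finishes the convex case immediately, and no ``appropriate choice of reflected point'' is needed.
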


By choosing appropriate test functons, one obtains a lower bound of $\mathcal{E}(\zeta^{\varepsilon,\Lambda})$ in both cases (a) and (b).
\begin{lemma}\label{lem3-2}
		If $\mathcal{S}\cap D\not=\varnothing$, there holds $$\mathcal{E}(\zeta^{\varepsilon,\Lambda})\ge \frac{\kappa_0^2\text{max}_{\bar{D}} b}{4\pi}\delta^2(\varepsilon)\ln{\frac{1}{\varepsilon}}+C\delta(\varepsilon);$$ 
		if $\mathcal{S}\subset \partial D$, there holds $$\mathcal{E}(\zeta^{\varepsilon,\Lambda})\ge \frac{\kappa_0^2\text{max}_{\bar{D}} b}{4\pi}\delta^2(\varepsilon)\ln{\frac{1}{\varepsilon}}-\frac{\alpha\kappa_0^2\text{max}_{\bar{D}} b}{4\pi}\delta^2(\varepsilon)\ln\ln{\frac{1}{\varepsilon}}+C\delta(\varepsilon),$$ 
		where $C$ are constants independent of $\varepsilon$ and $\Lambda$.
\end{lemma}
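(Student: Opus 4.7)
\medskip

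The plan is to use the maximizer property, namely $\mathcal{E}(\zeta^{\varepsilon,\Lambda}) \ge \mathcal{E}(\tilde\zeta)$ for any admissible $\tilde\zeta\in\mathcal{A}_{\varepsilon,\Lambda}$, and to exhibit an explicit test function whose energy realizes the claimed lower bound. The natural candidate is a normalized characteristic function
\[
\tilde\zeta_\varepsilon(x) = \frac{\kappa_0\delta(\varepsilon)}{|B_{r_\varepsilon}(x_\varepsilon)|}\,\chi_{B_{r_\varepsilon}(x_\varepsilon)}(x),
\]
with center $x_\varepsilon\in D$ approaching a point of $\mathcal{S}$ and radius $r_\varepsilon$ comparable to $\varepsilon$. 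The constraint $\tilde\zeta_\varepsilon\le\Lambda\delta(\varepsilon)/\varepsilon^{2}$ forces $r_\varepsilon\ge c\,\varepsilon$ with $c=c(\Lambda,\kappa_0,\max b)$, and $\int\tilde\zeta_\varepsilon\,d\nu=\kappa_0\delta(\varepsilon)$ by construction, so $\tilde\zeta_\varepsilon\in\mathcal{A}_{\varepsilon,\Lambda}$.

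With this test function in hand, I would evaluate the three pieces of $\mathcal{E}$ separately. The nonlinear part $\mathcal{F}_\varepsilon(\tilde\zeta_\varepsilon)$ is $O(\delta(\varepsilon))$, because $\tfrac{\varepsilon^{2}}{\delta(\varepsilon)}\tilde\zeta_\varepsilon$ is a bounded constant on a set of $\nu$-measure $O(\varepsilon^{2})$ and $F_*$ is continuous. The potential part $\int q\,\tilde\zeta_\varepsilon\,d\nu$ is likewise $O(\delta(\varepsilon))$, using continuity of $q$ and $x_\varepsilon\to x_0\in\mathcal{S}$. The decisive contribution is the quadratic term: splitting
\[
\tfrac12\!\int\!\zeta\mathcal{K}\zeta\,d\nu = \tfrac12\!\int\!\!\int b(x)G(x,y)\tilde\zeta_\varepsilon(x)\tilde\zeta_\varepsilon(y)\,d\nu(x)d\nu(y) + O(\delta^{2}(\varepsilon)),
\]
and inserting $G(x,y)=\tfrac{1}{2\pi}\ln\tfrac{\mathrm{diam}(D)}{|x-y|}-H(x,y)$, the logarithmic part produces $\tfrac{1}{2\pi}\ln\tfrac{1}{r_\varepsilon}$ after averaging, while the $H$ term is controlled by Lemma~\ref{lem3-1}. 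For case (a), picking $x_\varepsilon\equiv x_0\in\mathcal{S}\cap D$ (so $\mathrm{dist}(x_\varepsilon,\partial D)$ is bounded away from zero) makes $H(x,y)$ bounded on $B_{r_\varepsilon}\times B_{r_\varepsilon}$, hence the quadratic term equals $\tfrac{\kappa_0^{2}b(x_0)}{4\pi}\delta^{2}(\varepsilon)\ln\tfrac{1}{\varepsilon}+O(\delta^{2}(\varepsilon))$, which together with the other two pieces yields the first assertion since $b(x_0)=\max_{\bar D}b$.

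For case (b), the difficulty is that $b$ attains its maximum only on $\partial D$, while $G(x,y)$ degenerates as $x,y$ approach the boundary. I would choose $x_\varepsilon\in D$ with $\mathrm{dist}(x_\varepsilon,\partial D) =d_\varepsilon:=(\ln\tfrac{1}{\varepsilon})^{-\beta}$ for some small $\beta>0$ and $x_\varepsilon\to x_0\in\mathcal{S}\cap\partial D$. Hölder continuity of $b$ then gives $b(x_\varepsilon)\ge\max_{\bar D}b-C d_\varepsilon^{\alpha_0}$, which is negligible compared with the leading $\delta^{2}(\varepsilon)\ln\tfrac{1}{\varepsilon}$. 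The upper bound on $H$ in Lemma~\ref{lem3-1} produces $H(x,y)\le\tfrac{1}{2\pi}\ln\tfrac{\mathrm{diam}(D)}{d_\varepsilon}=\tfrac{\beta}{2\pi}\ln\ln\tfrac{1}{\varepsilon}+O(1)$ for $x,y\in B_{r_\varepsilon}(x_\varepsilon)$, so the quadratic term becomes
\[
\tfrac{\kappa_0^{2}\max_{\bar D}b}{4\pi}\delta^{2}(\varepsilon)\ln\tfrac{1}{\varepsilon}-\tfrac{\beta\kappa_0^{2}\max_{\bar D}b}{4\pi}\delta^{2}(\varepsilon)\ln\ln\tfrac{1}{\varepsilon}+O(\delta^{2}(\varepsilon)),
\]
and taking $\alpha:=\beta$ (for any admissible $\beta$) yields the claimed estimate. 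The main obstacle I anticipate is the bookkeeping in case (b): one must simultaneously choose $r_\varepsilon\sim\varepsilon$ and $d_\varepsilon\sim(\ln\tfrac{1}{\varepsilon})^{-\beta}$ so that $r_\varepsilon\ll d_\varepsilon$ (required for the asymptotic evaluation of $H$), verify the admissibility constraint with $b(x_\varepsilon)$ instead of $b(x_0)$, and track the Hölder error in $b$ together with the $R$-kernel and $q$-terms so that none of them overpower the advertised $\ln\ln$ correction.
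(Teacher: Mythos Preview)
Your treatment of case (a) is correct and essentially identical to the paper's: place a test vortex of radius $\sim\varepsilon$ at a fixed interior point of $\mathcal{S}$, use that $H$ is bounded there, and read off the leading term $\tfrac{\kappa_0^2\max b}{4\pi}\delta^2\ln\tfrac{1}{\varepsilon}$.

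In case (b), however, there is a genuine gap. You take $d_\varepsilon=(\ln\tfrac{1}{\varepsilon})^{-\beta}$ with $\beta>0$ \emph{small}, and you claim the H\"older error $|b(x_\varepsilon)-\max_{\bar D}b|\le C d_\varepsilon^{\alpha_0}$ is ``negligible compared with the leading $\delta^{2}\ln\tfrac{1}{\varepsilon}$''. That comparison is not the relevant one: the H\"older defect enters the leading term as
\[
-\,C\,d_\varepsilon^{\alpha_0}\,\delta^2(\varepsilon)\ln\tfrac{1}{\varepsilon}
= -\,C\,\delta^2(\varepsilon)\,(\ln\tfrac{1}{\varepsilon})^{\,1-\alpha_0\beta},
\]
and for small $\beta$ the exponent $1-\alpha_0\beta$ is positive, so this error dwarfs the $\delta^2\ln\ln\tfrac{1}{\varepsilon}$ correction and, for generic $\delta(\varepsilon)$ admissible under hypothesis (b), need not be $O(\delta(\varepsilon))$ either. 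The bound you write down is therefore false for small $\beta$. The fix is to take $\beta$ \emph{large}, namely $\beta=1/\alpha_0$ (this is exactly the paper's choice $d_\varepsilon=(\ln\tfrac{1}{\varepsilon})^{-1/\alpha}$), so that the H\"older error becomes $O(\delta^2)$ and is absorbed in $C\delta$. Note also that the $\alpha$ appearing in the lemma's statement is the fixed H\"older exponent of $b$ from the standing assumption $b\in C^{\alpha}(\bar D)$, not a free parameter you may set equal to your $\beta$; once $\beta=1/\alpha$ is forced, the coefficient of the $\ln\ln$ term is determined and your line ``taking $\alpha:=\beta$ (for any admissible $\beta$)'' does not make sense. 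The anticipated ``bookkeeping obstacle'' you flag at the end is thus not merely bookkeeping---it dictates the scale $d_\varepsilon$ and hence the entire shape of the estimate.
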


\begin{proof}
	If $\mathcal{S}\cap D\not=\varnothing$, we fix $\bar{x}\in\mathcal{S}\cap D$ and $0<r<dist(\bar{x},\partial D)$. Let $b_0=\text{inf}_{B_r(\bar{x})}b$ and $$\tilde{\zeta}^{\varepsilon,\Lambda}=\frac{\delta(\varepsilon)b_0}{\varepsilon^2b(x)} \chi_{_{B_{\varepsilon \sqrt{\kappa_0/\pi b_0}}(\bar{x})}}.$$
    it is obvious that $\tilde{\zeta}^{\varepsilon,\Lambda} \in\mathcal{A}_{\varepsilon,\Lambda}$ for all sufficiently small $\varepsilon$. Since $\zeta^{\varepsilon,\Lambda}$ is a maximizer, we have $\mathcal{E}(\zeta^{\varepsilon,\Lambda})\ge \mathcal{E}(\tilde{\zeta}^{\varepsilon,\Lambda})$. A simple calculation yields that
	\begin{equation*}
	\begin{split}
	\mathcal{E}(\zeta^{\varepsilon,\Lambda})&\ge\frac{1}{2}\int_D \tilde{\zeta}^{\varepsilon,\Lambda}(x)\mathcal{K}\tilde{\zeta}^{\varepsilon,\Lambda}d\nu(x)+\int_Dq(x)\tilde{\zeta}^{\varepsilon,\Lambda}(x)d\nu(x)-\frac{\delta(\varepsilon)}{\varepsilon^2}\int_D F(\frac{\varepsilon^2}{\delta(\varepsilon)}\tilde{\zeta}^{\varepsilon,\Lambda}(x))d\nu(x) \\
	&\ge\frac{1}{2}\int_D\int_D\frac{b(x)}{2\pi}\ln\frac{1}{|x-y|}\tilde{\zeta}^{\varepsilon,\Lambda}(x)\tilde{\zeta}^{\varepsilon,\Lambda}(y)d\nu(x)d\nu(y)+\delta(\varepsilon)\text{min}_{\bar{D}}q-\frac{\kappa_0b(\bar{x})}{b_0}\delta(\varepsilon)F(1)\\
	&=\frac{\kappa_0^2}{4\pi}(b(\bar{x})-(\varepsilon \sqrt{\kappa_0/\pi b_0})||b||_{C^{1}(D)})\delta^2(\varepsilon)\ln{\frac{1}{\varepsilon}}+C\delta(\varepsilon)\cdot\\
	&=\frac{\kappa_0^2\text{max}_{\bar{D}} b}{4\pi}\delta^2(\varepsilon)\ln{\frac{1}{\varepsilon}}+C\delta(\varepsilon).\\
	\end{split}
	\end{equation*}
	If $\mathcal{S}\subset \partial D$, take $\bar{x}\in S$ fixed. Since D is a Lipschitz domain, it satisfies an interior cone condition. So we can choose a family of points $\{x^\varepsilon\}\subset D$ and $\theta\in(0,1)$ such that $$\frac{\theta}{(\ln\frac{1}{\varepsilon})^{\frac{1}{\alpha}}}\le dist(x^\varepsilon,\partial D)\le dist(x^\varepsilon,\bar{x})=\frac{1}{(\ln\frac{1}{\varepsilon})^{\frac{1}{\alpha}}}.$$
	Take $r>0$ is sufficiently small, such that $b_0:= \text{inf}_{D\cap B_r(\bar{x})}b>0$. Then we define $$\hat{\zeta}^{\varepsilon,\Lambda}=\frac{\delta(\varepsilon)b_0}{\varepsilon^2b(x)} \chi_{_{B_{\varepsilon \sqrt{\kappa_0/\pi b_0}}(x^\varepsilon)}}. $$
	One can see that $\hat{\zeta}^{\varepsilon,\Lambda}\in\mathcal{A}_{\varepsilon,\Lambda}$ when $\varepsilon$ is sufficiently small. Notice that $\mathcal{E}(\zeta^{\varepsilon,\Lambda})\ge \mathcal{E}(\hat{\zeta}^{\varepsilon,\Lambda})$, by Lemma \ref{lem3-1} and the integral kernel representation of $\mathcal{K}$, we obtain
	\begin{equation*}
	\begin{split}
	\mathcal{E}(\zeta^{\varepsilon,\Lambda})&\ge\frac{1}{2}\int_D \hat{\zeta}^{\varepsilon,\Lambda}(x)\mathcal{K}\hat{\zeta}^{\varepsilon,\Lambda}d\nu(x)+\int_Dq(x)\hat{\zeta}^{\varepsilon,\Lambda}(x)d\nu(x)-\frac{\delta(\varepsilon)}{\varepsilon^2}\int_D F(\frac{\varepsilon^2}{\delta(\varepsilon)}\hat{\zeta}^{\varepsilon,\Lambda}(x))d\nu(x) \\
	&\ge\frac{\kappa_0^2}{4\pi}(b(\bar{x})-(\varepsilon \sqrt{\kappa_0/\pi b_0})^\alpha||b||_{C^{\alpha}(\bar{D})})\delta^2(\varepsilon)\ln{\frac{1}{\varepsilon}}-\frac{\alpha\kappa_0^2\text{sup}_Db}{4\pi}\delta^2(\varepsilon)\ln\ln{\frac{1}{\varepsilon}}+C\delta(\varepsilon)\\
	&=\frac{\kappa_0^2\text{max}_{\bar{D}} b}{4\pi}\delta^2(\varepsilon)\ln{\frac{1}{\varepsilon}}-\frac{\alpha\kappa_0^2\text{max}_{\bar{D}} b}{4\pi}\delta^2(\varepsilon)\ln\ln{\frac{1}{\varepsilon}}+C\delta(\varepsilon).
	\end{split}
	\end{equation*}
	Thus the proof is completed.	
\end{proof}

To study further properties of $\zeta^{\varepsilon,\Lambda}$, we are going to estimate the Lagrange multiplier $\mu^{\varepsilon,\Lambda}$.
\begin{lemma}\label{lem3-3}
	If $\mathcal{S}\cap D\not=\varnothing$, we have
	$$ \mu^{\varepsilon,\Lambda}\ge \frac{\kappa_0\text{max}_{\bar{D}} b}{2\pi}\delta(\varepsilon)\ln{\frac{1}{\varepsilon}}-|1-2\vartheta_1|f^{-1}(\Lambda)+C.$$ 
	If $\mathcal{S}\subset \partial D$, we have
	$$ \mu^{\varepsilon,\Lambda}\ge \frac{\kappa_0\text{max}_{\bar{D}} b}{2\pi}\delta(\varepsilon)\ln{\frac{1}{\varepsilon}}-\frac{\alpha\kappa_0\text{max}_{\bar{D}} b}{2\pi}\delta(\varepsilon)\ln\ln{\frac{1}{\varepsilon}}-|1-2\vartheta_1|f^{-1}(\Lambda)+C.$$ 
	where $\vartheta_1$ is the positive number in $(\text{H2}')$ and $C$ are constants independent of $\varepsilon$ and $\Lambda$.
\end{lemma}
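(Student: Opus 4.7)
Our plan is to extract a lower bound on $\mu^{\varepsilon,\Lambda}$ from an Euler-Lagrange identity combined with the energy lower bound of Lemma \ref{lem3-2}. Pairing $\psi^{\varepsilon,\Lambda}=\mathcal{K}\zeta^{\varepsilon,\Lambda}+q-\mu^{\varepsilon,\Lambda}$ with $\zeta^{\varepsilon,\Lambda}$, integrating against $d\nu$, and using $E_q(\zeta^{\varepsilon,\Lambda})=\mathcal{E}(\zeta^{\varepsilon,\Lambda})+\mathcal{F}_\varepsilon(\zeta^{\varepsilon,\Lambda})$ to eliminate $\int \zeta^{\varepsilon,\Lambda}\mathcal{K}\zeta^{\varepsilon,\Lambda}d\nu$, one arrives at the identity
\begin{equation*}
\mu^{\varepsilon,\Lambda}\kappa_0\delta(\varepsilon)=2\mathcal{E}(\zeta^{\varepsilon,\Lambda})+2\mathcal{F}_\varepsilon(\zeta^{\varepsilon,\Lambda})-\int_D q\,\zeta^{\varepsilon,\Lambda}d\nu-\int_D \zeta^{\varepsilon,\Lambda}\psi^{\varepsilon,\Lambda}d\nu.
\end{equation*}
Lemma \ref{lem3-2} supplies the principal contribution $2\mathcal{E}(\zeta^{\varepsilon,\Lambda})\ge \frac{\kappa_0^2\max_{\bar D}b}{2\pi}\delta(\varepsilon)^2\ln(1/\varepsilon)+C\delta(\varepsilon)$, with the additional $\ln\ln(1/\varepsilon)$ correction in case (b), while $-\int_D q\,\zeta^{\varepsilon,\Lambda}d\nu\ge -\|q\|_{L^\infty}\kappa_0\delta(\varepsilon)$ is trivial.

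The crux is to bound $2\mathcal{F}_\varepsilon(\zeta^{\varepsilon,\Lambda})-\int\zeta^{\varepsilon,\Lambda}\psi^{\varepsilon,\Lambda}d\nu$ from below. Using the profile \eqref{2-3}, on the non-plateau set $\{0<\zeta^{\varepsilon,\Lambda}<\Lambda\delta(\varepsilon)/\varepsilon^2\}$ one has $\psi^{\varepsilon,\Lambda}=f^{-1}(\varepsilon^2\zeta^{\varepsilon,\Lambda}/\delta(\varepsilon))$, while on the plateau $\{\zeta^{\varepsilon,\Lambda}=\Lambda\delta(\varepsilon)/\varepsilon^2\}$ one has $\psi^{\varepsilon,\Lambda}\ge f^{-1}(\Lambda)$. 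Combining this splitting with (H2$'$) in the form $\mathcal{F}_\varepsilon(\zeta)\ge\vartheta_1\int_D\zeta f^{-1}(\varepsilon^2\zeta/\delta(\varepsilon))d\nu$ and a short algebraic manipulation yields
\begin{equation*}
2\mathcal{F}_\varepsilon(\zeta^{\varepsilon,\Lambda})-\int_D\zeta^{\varepsilon,\Lambda}\psi^{\varepsilon,\Lambda}d\nu\ge (2\vartheta_1-1)\int_D\zeta^{\varepsilon,\Lambda}f^{-1}\!\left(\tfrac{\varepsilon^2\zeta^{\varepsilon,\Lambda}}{\delta(\varepsilon)}\right)d\nu - B,
\end{equation*}
where $B:=\int_{\{\zeta^{\varepsilon,\Lambda}=\Lambda\delta(\varepsilon)/\varepsilon^2\}}\zeta^{\varepsilon,\Lambda}(\psi^{\varepsilon,\Lambda}-f^{-1}(\Lambda))d\nu\ge 0$. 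Since $f^{-1}(\varepsilon^2\zeta^{\varepsilon,\Lambda}/\delta(\varepsilon))\le f^{-1}(\Lambda)$ on $\mathrm{supp}(\zeta^{\varepsilon,\Lambda})$ and $\int_D\zeta^{\varepsilon,\Lambda}d\nu=\kappa_0\delta(\varepsilon)$, the first term on the right is bounded below by $-|1-2\vartheta_1|f^{-1}(\Lambda)\kappa_0\delta(\varepsilon)$ irrespective of the sign of $2\vartheta_1-1$.

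The main obstacle is the plateau remainder $B$: we must show that after division by $\kappa_0\delta(\varepsilon)$ its contribution is absorbed into $C$, i.e.\ that $B=O(\delta(\varepsilon)^2)$. For $\Lambda$ chosen sufficiently large, Lemma \ref{lem2-2} and the boundary value $\psi^{\varepsilon,\Lambda}=q-\mu^{\varepsilon,\Lambda}$ on $\partial D$ force the plateau to lie strictly inside $D$, so $\psi^{\varepsilon,\Lambda}=f^{-1}(\Lambda)$ on its boundary. On the plateau $\mathcal{L}\psi^{\varepsilon,\Lambda}=\Lambda\delta(\varepsilon)/\varepsilon^2$, and the constraint $\int_D\zeta^{\varepsilon,\Lambda}d\nu=\kappa_0\delta(\varepsilon)$ forces $|\text{plateau}|_\nu\le \kappa_0\varepsilon^2/\Lambda$; comparison with the explicit radial solution of $\mathcal{L}u=\Lambda\delta(\varepsilon)/\varepsilon^2$ on a ball of comparable size with zero boundary data then produces $\|\psi^{\varepsilon,\Lambda}-f^{-1}(\Lambda)\|_{L^\infty(\text{plateau})}=O(\delta(\varepsilon))$, whence $B=O(\delta(\varepsilon)^2)$.

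Assembling the above bounds into the identity and dividing by $\kappa_0\delta(\varepsilon)$ produces
\begin{equation*}
\mu^{\varepsilon,\Lambda}\ge \tfrac{\kappa_0\max_{\bar D}b}{2\pi}\delta(\varepsilon)\ln(1/\varepsilon)-|1-2\vartheta_1|f^{-1}(\Lambda)+C
\end{equation*}
in case (a), and the analogous inequality with an additional $-\frac{\alpha\kappa_0\max_{\bar D}b}{2\pi}\delta(\varepsilon)\ln\ln(1/\varepsilon)$ term in case (b), the only difference between the two cases arising from the input of Lemma \ref{lem3-2}.
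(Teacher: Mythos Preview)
Your overall strategy is correct and is essentially the paper's: derive the identity
\[
\kappa_0\delta(\varepsilon)\,\mu^{\varepsilon,\Lambda}=2\mathcal{E}(\zeta^{\varepsilon,\Lambda})+2\mathcal{F}_\varepsilon(\zeta^{\varepsilon,\Lambda})-\int_D q\,\zeta^{\varepsilon,\Lambda}\,d\nu-\int_D\zeta^{\varepsilon,\Lambda}\psi^{\varepsilon,\Lambda}\,d\nu,
\]
feed in Lemma~\ref{lem3-2}, and use (H2$'$) together with the profile \eqref{2-3} to reduce matters to controlling the plateau remainder
\[
B=\int_{\{\zeta^{\varepsilon,\Lambda}=\Lambda\delta(\varepsilon)/\varepsilon^2\}}\zeta^{\varepsilon,\Lambda}\bigl(\psi^{\varepsilon,\Lambda}-f^{-1}(\Lambda)\bigr)\,d\nu.
\]
Everything up to this point is fine and matches the paper.

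The gap is in your treatment of $B$. You propose a maximum-principle comparison: on the plateau $P$ the function $w=\psi^{\varepsilon,\Lambda}-f^{-1}(\Lambda)$ solves $\mathcal{L}w=\Lambda\delta(\varepsilon)/\varepsilon^2$ with $w=0$ on $\partial P$, and you claim $\|w\|_{L^\infty(P)}=O(\delta(\varepsilon))$ by ``comparison with the explicit radial solution of $\mathcal{L}u$ on a ball of comparable size''. Three issues arise. First, $\mathcal{L}=-\tfrac{1}{b}\mathrm{div}(\tfrac{1}{b}\nabla\,\cdot\,)$ has variable coefficients, so there is no explicit radial solution to compare with. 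Second, even granting a Talenti-type rearrangement for such operators, the resulting constant depends on the ellipticity bounds for $\mathcal{L}$ on $P$, i.e.\ on $\inf_P b$; at this point in the argument nothing is known about the location of $P$, and in case~(b) (where $\mathcal{S}\subset\partial D$) $b$ may degenerate nearby, so the constant is not uniform. Third, your appeal to ``$\Lambda$ chosen sufficiently large'' to force $P\Subset D$ is extraneous to the hypotheses of the lemma, whose constant must be independent of $\Lambda$.

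The paper bounds $B$ by a different, more robust device (the Turkington energy trick): one sets $U^{\varepsilon,\Lambda}:=\bigl(\psi^{\varepsilon,\Lambda}-f^{-1}(\Lambda)-c\bigr)_+$ for a constant $c$ (built from $\max q$, $\min q$) chosen so that, thanks to the lower bound \eqref{2-5} on $\mu^{\varepsilon,\Lambda}$, $U^{\varepsilon,\Lambda}\in H(D)$ regardless of where $P$ sits. Integration by parts gives $\int_D |\nabla U^{\varepsilon,\Lambda}|^2 b^{-2}\,d\nu=\int_D\zeta^{\varepsilon,\Lambda}U^{\varepsilon,\Lambda}\,d\nu$, and H\"older together with the 2D Sobolev inequality and $|P|_\nu\le \kappa_0\varepsilon^2/\Lambda$ then yield $\int_D\zeta^{\varepsilon,\Lambda}U^{\varepsilon,\Lambda}\,d\nu\le C\delta(\varepsilon)^2$ with $C$ independent of $\varepsilon$ and $\Lambda$. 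This global $L^2$ argument requires no information on the shape or location of $P$ and no pointwise comparison, which is precisely what your approach lacks.
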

\begin{proof}
	Using \eqref{2-3} and assumption $(\text{H2}')$, we obtain
	\begin{equation}\label{3-1}
	\begin{split}
	2\mathcal E(\zeta^{\varepsilon,\Lambda}) &=  \int_D\zeta^{\varepsilon,\Lambda}\mathcal{K}\zeta^{\varepsilon,\Lambda}d\nu+2\int_Dq\zeta^{\varepsilon,\Lambda}d\nu-\frac{2\delta(\varepsilon)}{\varepsilon^2}\int_DF_*(\frac{\varepsilon^2}{\delta(\varepsilon)}\zeta^{\varepsilon,\Lambda})d\nu \\
	& \le\int_D \zeta^{\varepsilon,\Lambda} \psi^{\varepsilon,\Lambda}d\nu+\int_Dq\zeta^{\varepsilon,\Lambda}d\nu-2\vartheta_1\int_D \zeta^{\varepsilon,\Lambda} f^{-1}(\frac{\varepsilon^2}{\delta(\varepsilon)}\zeta^{\varepsilon,\Lambda})d\nu+\kappa_0\delta(\varepsilon)\mu^{\varepsilon,\Lambda} \\
	&=\int_{0<\zeta<\frac{\delta\Lambda}{\varepsilon^2}}\zeta^{\varepsilon,\Lambda} f^{-1}(\frac{\varepsilon^2}{\delta(\varepsilon)}\zeta^{\varepsilon,\Lambda})d\nu+\int_{\zeta=\frac{\delta\Lambda}{\varepsilon^2}}\zeta^{\varepsilon,\Lambda} \psi^{\varepsilon,\Lambda}d\nu\\
	&\ \ \  -2\vartheta_1\int_D \zeta^{\varepsilon,\Lambda} f^{-1}(\frac{\varepsilon^2}{\delta(\varepsilon)}\zeta^{\varepsilon,\Lambda})d\nu+\int_Dq\zeta^{\varepsilon,\Lambda}d\nu+\kappa_0\delta(\varepsilon)\mu^{\varepsilon,\Lambda} \\
	&\ge\int_{D}\zeta^{\varepsilon,\Lambda} f^{-1}(\frac{\varepsilon^2}{\delta(\varepsilon)}\zeta^{\varepsilon,\Lambda})d\nu-\int_{\zeta=\frac{\delta\Lambda}{\varepsilon^2}}\zeta^{\varepsilon,\Lambda}f^{-1}(\Lambda)d\nu+\kappa_0\delta(\varepsilon)\text{max}_{\bar{D}}q\\
	&\ \ \ +\int_{\zeta=\frac{\delta\Lambda}{\varepsilon^2}}\zeta^{\varepsilon,\Lambda} \psi^{\varepsilon,\Lambda}d\nu-2\vartheta_1\int_D \zeta^{\varepsilon,\Lambda} f^{-1}(\frac{\varepsilon^2}{\delta(\varepsilon)}\zeta^{\varepsilon,\Lambda})d\nu+\kappa_0\delta(\varepsilon)\mu^{\varepsilon,\Lambda}\\
	& \le |1-2\vartheta_1|\kappa_0\delta(\varepsilon) f^{-1}(\Lambda)+\int_D\zeta^{\varepsilon,\Lambda}\left(\psi^{\varepsilon,\Lambda}-\text{max}_{\bar{D}}q-f^{-1}(\Lambda)+\text{min}_{\bar{D}}q-2\right)_+d\nu\\
	&\ \ \ +\kappa_0\delta(\varepsilon)(2\text{max}_{\bar{D}}q-\text{min}_{\bar{D}}q+2)+\kappa_0\delta(\varepsilon)\mu^{\varepsilon,\Lambda}.\\
	\end{split}
	\end{equation}
	Denote $U^{\varepsilon,\Lambda}:=\left(\psi^{\varepsilon,\Lambda}-\text{max}_{\bar{D}}q-f^{-1}(\Lambda)+\text{max}_{\bar{D}}q-2\right)_+$. Since $\mu^{\varepsilon,\Lambda}\ge-f^{-1}(\Lambda)+\text{min}_{\bar{D}}q-1$, by Lemma \ref{lem2-2} we have $U^{\varepsilon,\Lambda}\in H(D)$.
	Hence by integration by parts we have
	\begin{equation}\label{3-2}
	\int_D\frac{|\nabla U^{\varepsilon,\Lambda}|^2}{b^2}d\nu= \int_D \zeta^{\varepsilon,\Lambda}U^{\varepsilon,\Lambda}d\nu.
	\end{equation}
	Then by H\"older's inequality and Sobolev's inequality
	\begin{equation}\label{3-3}
	\begin{split}
	\int_D \zeta^{\varepsilon,\Lambda}U^{\varepsilon,\Lambda}d\nu(x)
	& \le \frac{\delta(\varepsilon)\Lambda}{\varepsilon^2}|\{x\in D\mid\zeta^{\varepsilon,\Lambda}=\frac{\delta(\varepsilon)\Lambda}{\varepsilon^2}\}|^{\frac{1}{2}}\left(\int_D |U^{\varepsilon,\Lambda}|^2dm\right)^{\frac{1}{2}}\\
	& \le \frac{C\delta(\varepsilon)\Lambda }{\varepsilon^2}|\{x\in D\mid\zeta^{\varepsilon,\Lambda}=\frac{\delta(\varepsilon)\Lambda}{\varepsilon^2}\}|^{\frac{1}{2}}\left(\int_D |\nabla U^{\varepsilon,\Lambda}|dm \right)\\
	& \le \frac{C\delta(\varepsilon)\Lambda}{\varepsilon^2}|\{x\in D\mid\zeta^{\varepsilon,\Lambda}=\frac{\delta(\varepsilon)\Lambda}{\varepsilon^2}\}|^{\frac{1}{2}}\left(\int_D \frac{|\nabla U^{\varepsilon,\Lambda}|^2}{b}d\nu \right)\\
	& \le C\kappa_0\delta(\varepsilon)\left(\int_D {\frac{|\nabla U^{\varepsilon,\Lambda}|^2}{b^2}d\nu}\right)^{\frac{1}{2}}.\\
	\end{split}
	\end{equation}
	Here $C$ is a constant independent of $\varepsilon$ and $\Lambda$. From \eqref{3-2} and \eqref{3-3} we conclude that $\int_D \zeta^{\varepsilon,\Lambda}U^{\varepsilon,\Lambda}d\nu(x)=C\delta^2(\varepsilon)$ , which together with \eqref{3-1} and Lemma \ref{lem3-2} leads to the desired result.
\end{proof}

Now we can estimate the diameter of $supp(\zeta^{\varepsilon,\Lambda})$ as $\varepsilon$ tends to zero. To this end, we follows the strategy given in \cite{T} or the method developed in \cite{CWZ}. 
\begin{lemma}\label{lem3-4}
	Suppose $\delta(\varepsilon)$ satisfies restriction $(\text{a})$ or $(\text{b})$ given in Theorem \ref{thm1}, then for every $\gamma\in(0,1)$, There exists some $R_0>1$ such that
	\begin{equation}\label{3-4}
	diam\left(supp(\zeta^{\varepsilon,\Lambda})\right)\le R_0\varepsilon^\gamma,
	\end{equation}
	where $R_0$ may depend on $\Lambda$, but not on $\varepsilon$. We have
	\begin{equation}\label{3-5}
	\lim\limits_{\varepsilon\to0^+}\frac{\ln diam(supp(\zeta^{\varepsilon,\Lambda}))}{\ln\varepsilon}=1.
	\end{equation}
\end{lemma}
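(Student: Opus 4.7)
My plan is to adapt Turkington's vortex-concentration argument (cf.~\cite{T,CWZ,CZZ}) by combining the Euler--Lagrange description of $\zeta^{\varepsilon,\Lambda}$ with the sharp lower bound on $\mu^{\varepsilon,\Lambda}$ from Lemma~\ref{lem3-3}. Since $f$ is strictly increasing on $[0,+\infty)$ with $f(0^+)\geq 0$, formula \eqref{2-3} forces $\text{supp}(\zeta^{\varepsilon,\Lambda})\subset\{\psi^{\varepsilon,\Lambda}\geq 0\}$, whence by \eqref{2-4} and Lemma~\ref{lem3-3}
\begin{equation*}
\mathcal{K}\zeta^{\varepsilon,\Lambda}(x)\geq\mu^{\varepsilon,\Lambda}-q(x)\geq\frac{\kappa_0\max_{\bar D}b}{2\pi}\,\delta(\varepsilon)\ln\tfrac{1}{\varepsilon}\,(1-o_\varepsilon(1))
\end{equation*}
for every $x\in\text{supp}(\zeta^{\varepsilon,\Lambda})$. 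The restrictions on $\delta(\varepsilon)$ in (a) and (b) are exactly what absorb the lower-order constants (and the $\alpha\kappa_0\max_{\bar D}b\,\delta(\varepsilon)\ln\ln(1/\varepsilon)/(2\pi)$ correction in case (b)) into $o_\varepsilon(1)$. This is the pointwise lower bound we intend to contradict when the support spreads out.

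Fix an auxiliary exponent $\gamma_0\in(\gamma,1)$. The first substep is a concentration claim: there exist a constant $c_0\in(0,1)$ and a point $x^*_\varepsilon\in D$ such that $\int_{B_{\varepsilon^{\gamma_0}}(x^*_\varepsilon)}\zeta^{\varepsilon,\Lambda}\,d\nu\geq c_0\kappa_0\delta(\varepsilon)$. Suppose no such ball exists. For any $x\in\text{supp}(\zeta^{\varepsilon,\Lambda})$, split $\zeta^{\varepsilon,\Lambda}=\zeta_1+\zeta_2$ with $\zeta_1:=\zeta^{\varepsilon,\Lambda}\chi_{B_{\varepsilon^{\gamma_0}}(x)}$. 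The rearrangement inequality under the constraint $\zeta^{\varepsilon,\Lambda}\leq \Lambda\delta(\varepsilon)/\varepsilon^2$ bounds the $\zeta_1$-contribution by $\tfrac{b(x)m_1}{2\pi}\ln(1/\varepsilon)+O(\delta(\varepsilon))$, while the kernel bound $G(x,y)\leq\tfrac{1}{2\pi}\ln(\text{diam}(D)/|x-y|)$ from Lemma~\ref{lem3-1} applied for $|x-y|\geq\varepsilon^{\gamma_0}$ gives $\tfrac{b(x)\gamma_0 m_2}{2\pi}\ln(1/\varepsilon)+O(\delta(\varepsilon))$ for $\zeta_2$. Since $m_1<c_0\kappa_0\delta(\varepsilon)$ and $m_1+m_2=\kappa_0\delta(\varepsilon)$,
\begin{equation*}
\mathcal{K}\zeta^{\varepsilon,\Lambda}(x)<\frac{b(x)\kappa_0\delta(\varepsilon)}{2\pi}\bigl[\gamma_0+c_0(1-\gamma_0)\bigr]\ln\tfrac{1}{\varepsilon}+O(\delta(\varepsilon)),
\end{equation*}
which violates the lower bound since $(1-\gamma_0)(1-c_0)>0$ is fixed.

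With $x^*_\varepsilon$ in hand, estimate \eqref{3-4} is proved by a second contradiction. Assume $x\in\text{supp}(\zeta^{\varepsilon,\Lambda})$ with $|x-x^*_\varepsilon|>R_0\varepsilon^\gamma$ and $R_0\geq 2$. Decompose $\zeta^{\varepsilon,\Lambda}=\zeta_1+\zeta_2$ with $\zeta_1:=\zeta^{\varepsilon,\Lambda}\chi_{B_{\varepsilon^{\gamma_0}}(x^*_\varepsilon)}$, so $m_1\geq c_0\kappa_0\delta(\varepsilon)$ by the concentration step and $m_2\leq(1-c_0)\kappa_0\delta(\varepsilon)$. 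For $y\in\text{supp}(\zeta_1)$ one has $|x-y|\geq R_0\varepsilon^\gamma-\varepsilon^{\gamma_0}\geq\tfrac{R_0}{2}\varepsilon^\gamma$ for small $\varepsilon$, so Lemma~\ref{lem3-1} yields $b(x)\int G(x,y)\zeta_1\,d\nu\leq\tfrac{b(x)m_1\gamma}{2\pi}\ln(1/\varepsilon)+O(\delta(\varepsilon))$, while the rearrangement bound gives $b(x)\int G(x,y)\zeta_2\,d\nu\leq\tfrac{b(x)m_2}{2\pi}\ln(1/\varepsilon)+O(\delta(\varepsilon))$. Summing,
\begin{equation*}
\mathcal{K}\zeta^{\varepsilon,\Lambda}(x)\leq\frac{b(x)\kappa_0\delta(\varepsilon)}{2\pi}\bigl[1-c_0(1-\gamma)\bigr]\ln\tfrac{1}{\varepsilon}+O(\delta(\varepsilon)).
\end{equation*}
Comparison with the lower bound and $b(x)\leq\max_{\bar D}b$ forces $c_0(1-\gamma)\leq o_\varepsilon(1)$, a contradiction. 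Hence $\text{supp}(\zeta^{\varepsilon,\Lambda})\subset B_{R_0\varepsilon^\gamma}(x^*_\varepsilon)$ and $\text{diam}(\text{supp}(\zeta^{\varepsilon,\Lambda}))\leq 2R_0\varepsilon^\gamma$, proving \eqref{3-4}.

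For \eqref{3-5}, the bound \eqref{3-4} yields $\ln\text{diam}(\text{supp}(\zeta^{\varepsilon,\Lambda}))/\ln\varepsilon\geq\gamma+o_\varepsilon(1)$ for each $\gamma\in(0,1)$ (note $\ln\varepsilon<0$), hence $\liminf\geq 1$. Conversely, $\zeta^{\varepsilon,\Lambda}\leq \Lambda\delta(\varepsilon)/\varepsilon^2$ and $\int\zeta^{\varepsilon,\Lambda}\,d\nu=\kappa_0\delta(\varepsilon)$ force the Lebesgue measure of the support to be at least $\kappa_0\varepsilon^2/(\Lambda\max_{\bar D}b)$, whence $\text{diam}(\text{supp}(\zeta^{\varepsilon,\Lambda}))\geq c\varepsilon$ for an absolute $c>0$, giving $\limsup\leq 1$. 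The main obstacle lies in case (b): the $-O(\delta(\varepsilon)\ln\ln(1/\varepsilon))$ correction from Lemma~\ref{lem3-3} and the possibility that $x^*_\varepsilon$ is close to $\partial D$ (where the bounded correction $R(x,y)$ and the upper bound on $H(x,y)$ from Lemma~\ref{lem3-1} must be handled carefully) need tracking; the restriction $\ln\ln(1/\varepsilon)/(\delta(\varepsilon)\ln(1/\varepsilon))=o_\varepsilon(1)$ is exactly what makes these errors subleading.
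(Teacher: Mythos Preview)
Your argument is correct, but it follows a different route from the paper's. The paper uses the classical one-radius Turkington estimate: for every $x\in\text{supp}(\zeta^{\varepsilon,\Lambda})$ it shows that more than half of the normalized mass $\Gamma=\kappa_0^{-1}\delta(\varepsilon)^{-1}b\,\zeta^{\varepsilon,\Lambda}$ lies in $B_{R\varepsilon}(x)$, by combining the pointwise inequality $\mu^{\varepsilon,\Lambda}\le\mathcal{K}\zeta^{\varepsilon,\Lambda}(x)+q(x)$ with Lemma~\ref{lem3-3} and splitting the Newtonian integral at radius $R\varepsilon$. One gets $\int_{D\setminus B_{R\varepsilon}(x)}\Gamma\,dm\le (h(\Lambda)+C)/(\delta(\varepsilon)\ln R)$ in case~(a), chooses $R=e^{A/\delta(\varepsilon)}$ to make this $<\tfrac12$, and then the usual two-ball argument yields $\text{diam}\le 2R\varepsilon$; restriction~(a) finally converts $2R\varepsilon$ into $R_0\varepsilon^\gamma$. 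Case~(b) is identical with $R=e^{A/\delta(\varepsilon)}(\ln\tfrac1\varepsilon)^{2\alpha/\delta(\varepsilon)}$.

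Your two-step scheme (first locate a concentration ball $B_{\varepsilon^{\gamma_0}}(x^*_\varepsilon)$ carrying a fixed fraction $c_0$ of the mass, then rule out any support point at distance $>R_0\varepsilon^\gamma$ from $x^*_\varepsilon$) is the variant used e.g.\ in~\cite{CWZ}. It is slightly more elaborate but has the advantage that the restriction on $\delta(\varepsilon)$ is used only to absorb lower-order terms into an $o_\varepsilon(1)$ factor in the lower bound, so your $R_0$ comes out independent of $\Lambda$ (only the $\varepsilon$-threshold depends on $\Lambda$), which is marginally sharper than the statement. The paper's approach is shorter and keeps explicit track of the $\Lambda$-dependent constant $h(\Lambda)=\tfrac{2\pi}{\kappa_0\max_{\bar D}b}|1-2\vartheta_1|f^{-1}(\Lambda)$ throughout. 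Both arguments handle~\eqref{3-5} identically, via the isodiametric inequality for the reverse bound.
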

\begin{proof}
	For each $x\in supp(\zeta^{\varepsilon,\Lambda})$ there holds
	\begin{equation}\label{3-6}
	\mu^{\varepsilon,\Lambda}\le\mathcal{K}\zeta^{\varepsilon,\Lambda}(x)+q(x)\le\frac{\text{max}_{\bar{D}}b}{2\pi}\int_D \ln\frac{1}{|x-y|}\zeta^{\varepsilon,\Lambda}(y)d\nu(y)+\text{max}_{\bar{D}}q+C,
	\end{equation}
	where $C$ is a constant independent of $\varepsilon$ and $\Lambda$. To normalize the $L_1$ norm of $\zeta^{\varepsilon,\Lambda}$, we denote $\Gamma(y)=\kappa_0^{-1}\delta^{-1}\zeta^{\varepsilon,\Lambda}(y)b(y)$. One can obtain $\int_D\Gamma(y)dm(y)=1$. We devide our proof into three steps.
	
	The first step is to consider the case $\mathcal{S}\cap D\not=\varnothing$. According to Lemma \ref{lem3-3} and \eqref{3-6}, one obtains
	\begin{equation}\label{3-7}
    -\delta^{-1}(\varepsilon)(h(\Lambda)+C)\le\int_D\ln\frac{\varepsilon}{|x-y|}\Gamma(y)dm(y),
	\end{equation}
	where $h(\Lambda)=\frac{2\pi}{\kappa_0\text{max}_{\bar{D}}b}|1-2\vartheta_1|f^{-1}(\Lambda)$. If we take arbitrary $R>1$, a simple calculation yields
	\begin{equation*}
	\begin{split}
	\int_D\ln\frac{\varepsilon}{|x-y|}\Gamma(y)dm(y)&=\int_{D\setminus B_{R\varepsilon}(x)}\ln\frac{\varepsilon}{|x-y|}\Gamma(y)dm(y)+\int_{B_{R\varepsilon}(x)}\ln\frac{\varepsilon}{|x-y|}\Gamma(y)dm(y)\\
	&\le \int_{D\setminus B_{R\varepsilon}(x)}\ln(\frac{1}{R})\Gamma(y)dm(y)+\frac{\Lambda\text{sup}_Db}{\varepsilon^2\kappa_0}\int_{|y|\le\varepsilon}\ln\frac{\varepsilon}{|y|}dy\\
	&\le\ln\frac{1}{R}\int_{D\setminus B_{R\varepsilon}(x)}\Gamma(y)dm(y)+C,
	\end{split}
	\end{equation*}
	where $C$ is a constant not depend on $\varepsilon$. Using \eqref{3-7} we obtain
	\begin{equation}\label{3-8}
	\int_{D\setminus B_{R\varepsilon}(x)}\Gamma(y)dm(y)\le(h(\Lambda)+C)/(\delta(\varepsilon)\ln R).
	\end{equation}
	We can just let $R=e^{\frac{A}{\delta(\varepsilon)}}$, where $A$ is a constant such that the right side of the inequality is less than $\frac{1}{2}$. Hence we have
	\begin{equation}\label{3-9}
	\int_{D\setminus B_{R\varepsilon}(x)}\Gamma(y)dm(y)<\frac{1}{2}.
	\end{equation}
    Now we claim that $diam\left(supp(\zeta^{\varepsilon,\Lambda})\right)\le 2R\varepsilon$. Otherwise there would exist $x_1,x_2\in supp(\zeta^{\varepsilon,\Lambda})$ with the property $B_{R\varepsilon}(x_1)\cap B_{R\varepsilon}(x_2)=\varnothing$. Thus by \eqref{3-9}, there holds
	\begin{equation*}
	\int_D\Gamma(y)dm(y)\ge\int_{B_{R\varepsilon}(x_1)}\Gamma(y)dm(y)+\int_{B_{R\varepsilon}(x_2)}\Gamma(y)dm(y)>1,
	\end{equation*}
	which contradicts with $\int_D\Gamma(y)dm(y)=1$. Recall that $R=\exp(\frac{A}{\delta(\varepsilon)})$. By restriction $(\text{a})$ in Theorem \ref{thm1}, we deduce that for every $\gamma\in(0,1)$ as $\varepsilon$ sufficiently small, there exists some $R_0>1$ not depend on $\varepsilon$ such that $$diam\left(supp(\zeta^{\varepsilon,\Lambda})\right)\le 2R\varepsilon\le R_0\varepsilon^\gamma.$$ 
	
	The second step is to deal with the case  $\mathcal{S}\subset \partial D$. By \eqref{3-6} there holds
	\begin{equation}\label{3-10}
	\delta^{-1}(\varepsilon)(\alpha\ln\ln\frac{1}{\varepsilon}+h(\Lambda)+C)\le\int_D\ln\frac{\varepsilon}{|x-y|}\Gamma(y)dm(y).
	\end{equation}
	Hence instead of \eqref{3-8}, we obtain
	\begin{equation}\label{3-11}
	\int_{D\setminus B_{R\varepsilon}(x)}\Gamma(y)dm(y)\le(\alpha\ln\ln\frac{1}{\varepsilon}+h(\Lambda)+C)/(\delta(\varepsilon)\ln R),
	\end{equation}
	where $C$ does not depend on $\varepsilon$. This time we choose $R=e^{\frac{A}{\delta(\varepsilon)}}(\ln\frac{1}{\varepsilon})^{\frac{2\alpha}{\delta(\varepsilon)}}$ to ensure the right side is less than $\frac{1}{2}$. By restriction $(\text{b})$ in Theorem \ref{thm1}, one can obtain the same result just as the case $\mathcal{S}\cap D\not=\varnothing$.
	
	The last step is to prove \eqref{3-5}. By \eqref{3-4}, we have
	\begin{equation}\label{3-12}
	\liminf\limits_{\varepsilon\to0^+}\frac{\ln diam(supp(\zeta^{\varepsilon,\Lambda}))}{\ln\varepsilon}\ge 1.
	\end{equation}
	On the other hand, from the isodiametric inequality  
	\begin{equation*}
	\kappa_0\delta(\varepsilon)=\int_D\zeta^{\varepsilon,\Lambda}(x)d\nu(x)\le\frac{\pi\text{sup}_Db\Lambda\delta(\varepsilon)}{4\varepsilon^2}diam(supp(\zeta^{\varepsilon,\Lambda}))^2,
	\end{equation*}
	which implies
	\begin{equation}\label{3-13}
	\limsup\limits_{\varepsilon\to0^+}\frac{\ln diam(supp(\zeta^{\varepsilon,\Lambda}))}{\ln\varepsilon}\le 1.
	\end{equation}
	Combining \eqref{3-12} and \eqref{3-13}, we get \eqref{3-5}.
\end{proof}

The next lemma shows $supp(\zeta^{\varepsilon,\Lambda})$ can not get too close to $\partial D$.
\begin{lemma}\label{lem3-5}
	If $\mathcal{S}\cap D\not=\varnothing$, there exists some constant $\eta>0$ not depend on $\varepsilon$ such that 
	\begin{equation}\label{3-14}
	dist(supp(\zeta^{\varepsilon,\Lambda}),\partial D)>\eta.
	\end{equation}
	If $\mathcal{S}\subset \partial D$, there exist constants $C_d,\gamma_0>0$ not depend on $\varepsilon$ such that 
	\begin{equation}\label{3-15}
	dist(supp(\zeta^{\varepsilon,\Lambda}),\partial D)\ge\frac{C_d}{(\ln\frac{1}{\varepsilon})^{\gamma_0}}.
	\end{equation}
\end{lemma}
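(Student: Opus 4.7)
The plan is to combine the pointwise inequality $\mathcal{K}\zeta^{\varepsilon,\Lambda}(x)+q(x)\ge \mu^{\varepsilon,\Lambda}$ valid for every $x\in\mathrm{supp}(\zeta^{\varepsilon,\Lambda})$ (which follows from $\psi^{\varepsilon,\Lambda}(x)\ge 0$ whenever $\zeta^{\varepsilon,\Lambda}(x)>0$) with the lower bounds on $\mu^{\varepsilon,\Lambda}$ furnished by Lemma \ref{lem3-3}, using an upper bound on $\mathcal{K}\zeta^{\varepsilon,\Lambda}(x)$ that degenerates as $d(x):=\mathrm{dist}(x,\partial D)\to 0$.

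To obtain the upper bound, I would split $G(x,y)=\frac{1}{2\pi}\ln\frac{\mathrm{diam}(D)}{|x-y|}-H(x,y)$ and use the kernel representation of Definition \ref{def2}. A rearrangement argument based on $\|\zeta^{\varepsilon,\Lambda}\|_\infty\le\Lambda\delta(\varepsilon)/\varepsilon^2$ and $\int\zeta^{\varepsilon,\Lambda}\,d\nu=\kappa_0\delta(\varepsilon)$ controls the logarithmic kernel by $\frac{\kappa_0\delta(\varepsilon)}{2\pi}\ln(1/\varepsilon)+O(\delta(\varepsilon))$, and the bounded term $R$ contributes $O(\delta(\varepsilon))$. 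For the $H$-integral, Lemma \ref{lem3-4} gives $|x-y|\le R_0\varepsilon^\gamma$ for every $y\in\mathrm{supp}(\zeta^{\varepsilon,\Lambda})$ and hence $d(y)\le d(x)+R_0\varepsilon^\gamma$, so by Lemma \ref{lem3-1},
\[
H(x,y)\ge\frac{1}{2\pi}\ln\frac{\mathrm{diam}(D)}{2d(x)+3R_0\varepsilon^\gamma}.
\]
Assembling the three pieces gives, for every $x\in\mathrm{supp}(\zeta^{\varepsilon,\Lambda})$,
\[
\mathcal{K}\zeta^{\varepsilon,\Lambda}(x)\le\frac{b(x)\kappa_0\delta(\varepsilon)}{2\pi}\ln\frac{2d(x)+3R_0\varepsilon^\gamma}{\varepsilon}+C(\Lambda)\delta(\varepsilon).
\]

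Inserting this into $\mu^{\varepsilon,\Lambda}\le\mathcal{K}\zeta^{\varepsilon,\Lambda}(x)+q(x)$ with $b(x)\le\max_{\bar D}b$ and invoking Lemma \ref{lem3-3}, the leading $\frac{\kappa_0\max_{\bar D}b}{2\pi}\delta(\varepsilon)\ln(1/\varepsilon)$ terms cancel and one is left with an inequality of the form $\delta(\varepsilon)\ln\frac{1}{2d(x)+3R_0\varepsilon^\gamma}\le C_\Lambda+\alpha\delta(\varepsilon)\ln\ln(1/\varepsilon)$, where the $\ln\ln$ term is absent in case (a) and present in case (b). In case (a), the absence of the $\ln\ln$ correction together with the observation that support cannot accumulate at boundary maximum points of $b$ (otherwise the upper bound on $\mathcal{E}(\zeta^{\varepsilon,\Lambda})$ obtained from the preceding pointwise estimate of $\mathcal{K}\zeta^{\varepsilon,\Lambda}$ would contradict the interior test function lower bound of Lemma \ref{lem3-2}(a)) allows one to conclude $d(x)\ge\eta$ for some $\varepsilon$-independent $\eta>0$, proving \eqref{3-14}. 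In case (b), exponentiating and invoking $\ln\ln(1/\varepsilon)/(\delta(\varepsilon)\ln(1/\varepsilon))=o_\varepsilon(1)$ converts the inequality into $d(x)\ge C_d/(\ln(1/\varepsilon))^{\gamma_0}$ for suitable constants $C_d,\gamma_0>0$, proving \eqref{3-15}.

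The main technical obstacle is the careful bookkeeping of the $\Lambda$-dependent constants in the rearrangement and $R$-term bounds to ensure the final $\eta$ in case (a) is genuinely $\varepsilon$-independent, and to extract the clean polynomial rate in case (b); the subtle distinguishing point between the two cases is precisely the mismatch between interior and boundary-supported test-function placement in Lemma \ref{lem3-2}, which propagates through Lemma \ref{lem3-3} into the presence or absence of the $\ln\ln$ correction in the decisive inequality above.
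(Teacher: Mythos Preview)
Your strategy coincides with the paper's (which only sketches the idea and defers to \cite{CZZ}): compare the lower bound on $\mu^{\varepsilon,\Lambda}$ from Lemma~\ref{lem3-3} (equivalently, the energy lower bound of Lemma~\ref{lem3-2}) with a pointwise upper bound on $\mathcal{K}\zeta^{\varepsilon,\Lambda}$ that picks up a penalty $\sim\delta(\varepsilon)\ln(1/d(x))$ from the regular part $H$ of the Green's function when the support approaches $\partial D$.

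There is, however, a genuine quantitative gap in the conclusion you draw. After the leading $\frac{\kappa_0\max_{\bar D}b}{2\pi}\delta(\varepsilon)\ln(1/\varepsilon)$ terms cancel, the inequality you obtain in case~(a) is of the form
\[
\delta(\varepsilon)\,\ln\frac{1}{2d(x)+O(\varepsilon^\gamma)}\ \le\ C_\Lambda,
\]
which yields only $d(x)\gtrsim\exp\bigl(-C_\Lambda/\delta(\varepsilon)\bigr)$. In \cite{CZZ} one has $\delta(\varepsilon)\equiv 1$, so this \emph{is} a fixed positive $\eta$; in the present paper $\delta(\varepsilon)\to 0$, and the bound tends to zero. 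Your supplementary energy comparison at boundary maximum points of $b$ runs into the same wall: the $H$-penalty enters the energy at order $\delta(\varepsilon)^2$ while the slack against the interior test function is $O(\delta(\varepsilon))$, so the resulting constraint is again $\ln(1/d)\le C/\delta(\varepsilon)$. The same stray factor $\exp(-C_\Lambda/\delta(\varepsilon))$ contaminates case~(b) and prevents you from extracting the clean polynomial rate $C_d/(\ln(1/\varepsilon))^{\gamma_0}$, since it decays faster than any fixed power of $\ln(1/\varepsilon)$. To close the argument you need an additional mechanism that removes the $1/\delta(\varepsilon)$ from the exponent (or else accept a weaker distance bound); the paper's one-line deferral to the $\delta\equiv 1$ case in \cite{CZZ} does not obviously supply it. As a minor side remark, note that Lemma~\ref{lem3-1} as stated has $|x+y|$ in the lower bound for $H$, not $|x-y|$; your bound $H(x,y)\ge\frac{1}{2\pi}\ln\frac{\mathrm{diam}(D)}{2d(x)+3R_0\varepsilon^\gamma}$ tacitly uses the latter.
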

\begin{proof}
	The idea of this lemma is simple: when the vortex concentrates to the boundary, its total energy is strictly less than it to a inner point, due to the regular part of the Green's function. For the proof we refer to Lemma 2.8 and Lemma 3.7 of \cite{CZZ}. 
\end{proof}

To eliminate the patch part in \eqref{2-3}, we give a refined version of Lemma \ref{3-1}. Recall that the center of vorticity is defined by
\begin{equation*}
X^\varepsilon:=\frac{\int_D x\zeta^{\varepsilon,\Lambda}(x)dm(x)}{\int_D \zeta^{\varepsilon,\Lambda}(x)dm(x)},
\end{equation*}
and the neighborhood of $\mathcal{S}$ is defined by
$$\mathcal{S}_l:=\{x\in D|dist(x,\mathcal{S})<l\}.$$
\begin{lemma}\label{lem3-6}
	For all sufficiently small $\varepsilon>0$, we have 
	\begin{equation}\label{3-16}
	\mu^{\varepsilon,\Lambda}\ge \frac{\kappa_0b(X^\varepsilon)}{2\pi}\delta(\varepsilon)\ln{\frac{1}{\varepsilon}}-b(X^\varepsilon)H(X^\varepsilon,X^\varepsilon)\kappa_0\delta(\varepsilon)-|1-2\vartheta_1|f^{-1}(\Lambda)+C,
	\end{equation}
	where $C$ does not depend on $\varepsilon$ and $\Lambda$.
\end{lemma}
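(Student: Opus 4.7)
The plan is to refine the argument of Lemma \ref{lem3-3} by replacing the lower bound on $\mathcal{E}(\zeta^{\varepsilon,\Lambda})$ coming from Lemma \ref{lem3-2} (whose test function sits at a maximum of $b$) with a new lower bound produced by a test function centered at the center of vorticity $X^\varepsilon$ itself. Lemma \ref{lem3-5} guarantees that $\text{dist}(X^\varepsilon,\partial D)$ stays comfortably larger than $\varepsilon$ in both regimes (a) and (b), and since $b\in C^\alpha(\bar D)\cap C^1(D)$ with $b>0$ in $D$, I set $b_0:=\tfrac{1}{2}b(X^\varepsilon)$ and $r_\varepsilon:=\varepsilon\sqrt{\kappa_0/(\pi b_0)}$, so that for all small $\varepsilon$ we have $B_{r_\varepsilon}(X^\varepsilon)\subset D$ and $b\ge b_0$ on this ball. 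I then define
$$\tilde{\zeta}^\varepsilon(x):=\frac{\delta(\varepsilon) b_0}{\varepsilon^2 b(x)}\chi_{_{B_{r_\varepsilon}(X^\varepsilon)}}(x),$$
which lies in $\mathcal{A}_{\varepsilon,\Lambda}$ and satisfies $\int_D\tilde{\zeta}^\varepsilon d\nu=\kappa_0\delta(\varepsilon)$; the maximizing property yields $\mathcal{E}(\zeta^{\varepsilon,\Lambda})\ge \mathcal{E}(\tilde{\zeta}^\varepsilon)$.

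Next I compute $\mathcal{E}(\tilde{\zeta}^\varepsilon)$ via the integral kernel of $\mathcal{K}$ combined with the splitting $G(x,y)=\frac{1}{2\pi}\ln\frac{\text{diam}(D)}{|x-y|}-H(x,y)$. On the support of $\tilde{\zeta}^\varepsilon$, H\"older continuity of $b$ gives $b(x)=b(X^\varepsilon)+O(\varepsilon^\alpha)$, while continuity of $H$ at the interior point $X^\varepsilon$ together with boundedness of $R$ produce $H(x,y)=H(X^\varepsilon,X^\varepsilon)+o_\varepsilon(1)$ and $R(x,y)=O(1)$ uniformly on $B_{r_\varepsilon}(X^\varepsilon)\times B_{r_\varepsilon}(X^\varepsilon)$. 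A direct evaluation of the logarithmic self-energy of a uniform disk of radius $r_\varepsilon\sim\varepsilon$ then yields
$$\frac{1}{2}\int_D\tilde{\zeta}^\varepsilon\,\mathcal{K}\tilde{\zeta}^\varepsilon\, d\nu\;\ge\;\frac{\kappa_0^2 b(X^\varepsilon)}{4\pi}\delta^2(\varepsilon)\ln\frac{1}{\varepsilon}-\frac{b(X^\varepsilon)H(X^\varepsilon,X^\varepsilon)\kappa_0^2}{2}\delta^2(\varepsilon)+O(\delta^2(\varepsilon)),$$
while the linear piece $\int q\tilde{\zeta}^\varepsilon d\nu$ and the convex piece $\mathcal{F}_\varepsilon(\tilde{\zeta}^\varepsilon)$ are both $O(\delta(\varepsilon))$.

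Feeding this refined lower bound on $\mathcal{E}(\zeta^{\varepsilon,\Lambda})$ back into the chain \eqref{3-1}--\eqref{3-3} of the proof of Lemma \ref{lem3-3} gives
$$\kappa_0\delta(\varepsilon)\mu^{\varepsilon,\Lambda}\;\ge\;2\mathcal{E}(\zeta^{\varepsilon,\Lambda})-|1-2\vartheta_1|\kappa_0\delta(\varepsilon)f^{-1}(\Lambda)-O(\delta(\varepsilon)),$$
and dividing by $\kappa_0\delta(\varepsilon)$ produces \eqref{3-16}. The main obstacle is the case $\mathcal{S}\subset\partial D$, where $X^\varepsilon$ may approach $\partial D$ at a rate of $(\ln\frac{1}{\varepsilon})^{-\gamma_0}$ and $H(X^\varepsilon,X^\varepsilon)$ is not uniformly bounded in $\varepsilon$; here one must exploit the quantitative distance bound from Lemma \ref{lem3-5} to ensure $r_\varepsilon\ll \text{dist}(X^\varepsilon,\partial D)$, so that $H$ varies by only $o_\varepsilon(1)$ on $B_{r_\varepsilon}(X^\varepsilon)\times B_{r_\varepsilon}(X^\varepsilon)$ and the leading-order logarithmic self-energy of $\tilde{\zeta}^\varepsilon$ is preserved.
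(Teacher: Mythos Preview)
Your proposal is correct and follows essentially the same route as the paper: place a disk-type test function at the center of vorticity $X^\varepsilon$, use Lemma \ref{lem3-5} to guarantee it sits well inside $D$, expand $\mathcal{K}$ through $G=\tfrac{1}{2\pi}\ln\tfrac{\text{diam}(D)}{|x-y|}-H$ to extract the $\tfrac{\kappa_0^2 b(X^\varepsilon)}{4\pi}\delta^2\ln\tfrac{1}{\varepsilon}$ and $-\tfrac{1}{2}b(X^\varepsilon)H(X^\varepsilon,X^\varepsilon)\kappa_0^2\delta^2$ terms, and then reuse the chain \eqref{3-1}--\eqref{3-3} from Lemma \ref{lem3-3}. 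The only stylistic difference is that where you invoke ``continuity of $H$'' with the quantitative distance bound from Lemma \ref{lem3-5}, the paper writes down the interior gradient estimate for harmonic functions explicitly, obtaining $|H(x,y)-H(X^\varepsilon,X^\varepsilon)|\le C_H\varepsilon^{1/2}(\ln\tfrac{1}{\varepsilon})^{\gamma_0}\ln\ln\tfrac{1}{\varepsilon}$ on the support of the test function and then choosing $\varepsilon<\varepsilon_1(\Lambda)$ so that this is below $1$; your closing remark about case (b) is exactly this point.
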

\begin{proof}
	Let us fix small $r>0$ such that $b_0=\text{inf}_{\mathcal{S}_r}>0$. Define $$ \bar{\zeta}^{\varepsilon,\Lambda}=\frac{\delta(\varepsilon)b_0}{\varepsilon^2b(x)} \chi_{_{B_{\varepsilon \sqrt{\kappa_0/\pi b_0}}(X^\varepsilon)}}.$$
	By Lemma \ref{lem3-5}, $\bar{\zeta}^{\varepsilon,\Lambda}$ is in $\mathcal{A}^{\varepsilon,\Lambda}$ if $\varepsilon$ is sufficiently small. We apply the interior estimate for harmonic functions, and deduce that for all $x,y\in supp(\zeta^{\varepsilon,\Lambda})$, there holds
	\begin{equation}\label{3-17}
	\begin{split}
	|H(x,y)-&H(X^\varepsilon,X^\varepsilon)|\\
	&\le|H(x,y)-H(X^\varepsilon,y)|+|H(X^\varepsilon,y)-H(X^\varepsilon,X^\varepsilon)|\\
	&\le\frac{C|x-X^\varepsilon|}{dist(supp(\zeta)^{\varepsilon,\Lambda}),\partial D}|\text{sup}_DH(\cdot,y)|+\frac{C|y-X^\varepsilon|}{dist(supp(\zeta)^{\varepsilon,\Lambda}),\partial D}|\text{sup}_DH(X^\varepsilon,\cdot)|\\
	&\le C_H\varepsilon^{\frac{1}{2}}(\ln\frac{1}{\varepsilon})^{\gamma_0}\ln\ln\frac{1}{\varepsilon},
	\end{split}
	\end{equation} 
where $C_H$ may depend on $\Lambda$ but not on $\varepsilon$, $\gamma_0$ is the positive number in Lemma \ref{lem3-5}. Just let $\varepsilon<\varepsilon_1(\Lambda)$ such that $C_H\varepsilon^{\frac{1}{2}}(\ln\frac{1}{\varepsilon})^{\gamma_0}\ln\ln\frac{1}{\varepsilon}<1$,  one can calculate as in the proof of Lemma \ref{lem3-2} to obtain 
\begin{equation}\label{3-18}
\mathcal{E}(\zeta^{\varepsilon,\Lambda})\ge \frac{\kappa_0^2b(X^\varepsilon)}{4\pi}\delta^2(\varepsilon)\ln{\frac{1}{\varepsilon}}-\frac{1}{2}b(X^\varepsilon)H(X^\varepsilon,X^\varepsilon)\kappa_0^2\delta^2(\varepsilon)+C\delta(\varepsilon),
\end{equation}
where $C$ does not depend on $\varepsilon$ and $\Lambda$. \eqref{3-16} follows from \eqref{3-18} by a same argument as in the proof of Lemma \ref{lem3-3}. The proof is thus completed.
\end{proof}

In the next Lemma, we obtains a prior upper bound of $\psi^{\varepsilon,\Lambda}$ with respect to $\Lambda$.
\begin{lemma}\label{lem3-7}
	For $\varepsilon$ sufficiently small, one has 
	$$\psi^{\varepsilon,\Lambda} \le |1-2\delta_0|f^{-1}(\Lambda)+\frac{\text{max}_{\bar{D}} b}{4\pi}\delta(\varepsilon)\ln\Lambda+C,$$
	where $C$ does not depend on $\varepsilon$ and $\Lambda$.
\end{lemma}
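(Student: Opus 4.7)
The plan is to bound $\psi^{\varepsilon,\Lambda}=\mathcal{K}\zeta^{\varepsilon,\Lambda}+q-\mu^{\varepsilon,\Lambda}$ pointwise by matching a sharp upper bound for $\mathcal{K}\zeta^{\varepsilon,\Lambda}$ against the sharp lower bound for $\mu^{\varepsilon,\Lambda}$ furnished by Lemma \ref{lem3-6}. First observe that outside $\mathrm{supp}(\zeta^{\varepsilon,\Lambda})$ the representation \eqref{2-3} forces $\psi^{\varepsilon,\Lambda}\le 0$, so the claim is trivially satisfied there; it therefore suffices to prove the estimate for $x\in\mathrm{supp}(\zeta^{\varepsilon,\Lambda})$. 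On the support, Lemma \ref{lem3-4} supplies the key diameter control $|x-X^\varepsilon|\le R_0\varepsilon^\gamma$ for any $\gamma\in(0,1)$, which will let me treat both $b$ and $H$ as essentially constant there.

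Next I would decompose the kernel via $G(x,y)=\frac{1}{2\pi}\ln\frac{\mathrm{diam}(D)}{|x-y|}-H(x,y)$ and attack the logarithmic part with the bathtub principle (as in Lemma \ref{lem2-2}), applied to the density $\rho:=\zeta^{\varepsilon,\Lambda} b$, which satisfies $\rho\le \Lambda\delta(\varepsilon)\max_{\bar D}b/\varepsilon^2$ and $\int_D\rho\,dm=\kappa_0\delta(\varepsilon)$. The worst case concentrates all the mass on $B_r(x)$ with $r=\varepsilon\sqrt{\kappa_0/(\pi\Lambda\max_{\bar D}b)}$, and direct evaluation of $\int_{B_r(x)}\ln(1/|x-y|)\,dm$ yields
\begin{equation*}
\int_D\ln\tfrac{1}{|x-y|}\zeta^{\varepsilon,\Lambda}\,d\nu\le \kappa_0\delta(\varepsilon)\bigl(\ln\tfrac{1}{\varepsilon}+\tfrac12\ln\Lambda\bigr)+C\delta(\varepsilon).
\end{equation*}
For the regular piece, an interior estimate for harmonic functions exactly as in \eqref{3-17}, combined with Lemmas \ref{lem3-4} and \ref{lem3-5}, gives $|H(x,y)-H(X^\varepsilon,X^\varepsilon)|=o_\varepsilon(1)$ uniformly for $x,y\in\mathrm{supp}(\zeta^{\varepsilon,\Lambda})$, so that $b(x)\int_D H(x,y)\zeta^{\varepsilon,\Lambda}\,d\nu=b(X^\varepsilon)H(X^\varepsilon,X^\varepsilon)\kappa_0\delta(\varepsilon)+o_\varepsilon(\delta(\varepsilon))$.

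Plugging these two estimates together with Lemma \ref{lem3-6} into $\psi^{\varepsilon,\Lambda}=\mathcal{K}\zeta^{\varepsilon,\Lambda}+q-\mu^{\varepsilon,\Lambda}$, the critical observation is that the $b(X^\varepsilon)H(X^\varepsilon,X^\varepsilon)\kappa_0\delta(\varepsilon)$ contribution from the $H$-piece of $\mathcal{K}\zeta^{\varepsilon,\Lambda}$ cancels exactly the corresponding term coming from $-\mu^{\varepsilon,\Lambda}$, and the leading $\frac{\kappa_0}{2\pi}\delta(\varepsilon)\ln(1/\varepsilon)$ terms survive only in the difference $b(x)-b(X^\varepsilon)$. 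What remains after simplification is
\begin{equation*}
\psi^{\varepsilon,\Lambda}(x)\le \frac{\kappa_0\delta(\varepsilon)}{2\pi}\bigl(b(x)-b(X^\varepsilon)\bigr)\ln\tfrac{1}{\varepsilon} + \frac{\max_{\bar D}b\cdot\kappa_0}{4\pi}\delta(\varepsilon)\ln\Lambda + |1-2\vartheta_1|f^{-1}(\Lambda)+C.
\end{equation*}

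The main obstacle is then to absorb the residual $(b(x)-b(X^\varepsilon))\delta(\varepsilon)\ln(1/\varepsilon)$ into $C$. I invoke the H\"older regularity $b\in C^\alpha(\bar D)$ together with Lemma \ref{lem3-4} to obtain $|b(x)-b(X^\varepsilon)|\le C\varepsilon^{\alpha\gamma}$ whenever $x\in\mathrm{supp}(\zeta^{\varepsilon,\Lambda})$; choosing $\gamma\in(0,1)$ close enough to $1$ that $\varepsilon^{\alpha\gamma}\delta(\varepsilon)\ln(1/\varepsilon)=o_\varepsilon(1)$ under both restriction (a) and restriction (b)---possible since $\varepsilon^{\alpha\gamma}$ decays polynomially while $\delta(\varepsilon)\ln(1/\varepsilon)$ grows at most logarithmically---closes the argument. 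The most delicate point of the entire scheme is the cancellation of the $H(X^\varepsilon,X^\varepsilon)\delta(\varepsilon)$ contributions in case (b), which quietly relies on the quantitative distance bound $\mathrm{dist}(\mathrm{supp}(\zeta^{\varepsilon,\Lambda}),\partial D)\ge C_d/(\ln(1/\varepsilon))^{\gamma_0}$ of Lemma \ref{lem3-5}; without that quantitative lower bound, the harmonic interior estimate would fail to give $o_\varepsilon(1)$, and the residual $H$-terms would not be absorbable.
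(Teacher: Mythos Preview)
Your proposal is correct and follows essentially the same route as the paper: restrict to $\mathrm{supp}(\zeta^{\varepsilon,\Lambda})$, decompose $G$ into the logarithmic kernel and $H$, apply the bathtub principle to the logarithmic part, replace $H(x,y)$ by $H(X^\varepsilon,X^\varepsilon)$ via the interior harmonic estimate \eqref{3-17}, and then invoke Lemma \ref{lem3-6} so that both the $\frac{\kappa_0 b(X^\varepsilon)}{2\pi}\delta(\varepsilon)\ln\frac{1}{\varepsilon}$ term and the $b(X^\varepsilon)H(X^\varepsilon,X^\varepsilon)\kappa_0\delta(\varepsilon)$ term cancel. The only cosmetic difference is that the paper replaces $b(x)$ by $b(X^\varepsilon)+O(\varepsilon^{\alpha\gamma})$ at the outset of the bathtub estimate, whereas you carry $b(x)$ through and absorb the residual $(b(x)-b(X^\varepsilon))\delta(\varepsilon)\ln\frac{1}{\varepsilon}$ at the end; either way the term is $O(\varepsilon^{\alpha\gamma}\ln\frac{1}{\varepsilon})=o_\varepsilon(1)$ and gets absorbed into $C$ once $\varepsilon<\varepsilon_1(\Lambda)$.
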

\begin{proof}
	For each $x\in supp(\zeta^{\varepsilon,\Lambda})$, by Lemma \ref{lem3-4} and the bathtub principle we have
\begin{equation*}
\begin{split}
\psi^{\varepsilon,\Lambda}&(x)
\le\frac{b(x)}{2\pi}\int_D \ln\frac{1}{|x-y|}\zeta{^{\varepsilon,\Lambda}}(y)d\nu(y)-b(x)\int_D H(x,y)\zeta{^{\varepsilon,\Lambda}}(y)d\nu(y)-\mu^{_{\varepsilon,\Lambda}}+C\\
&\le(b(X^\varepsilon)+L_0\varepsilon^\alpha||b||_{C^{\alpha}(D)})\frac{\delta(\varepsilon)\Lambda}{2\pi\varepsilon^2}\int_{B_{\varepsilon \sqrt{\kappa_0/\Lambda\pi}}(0)}\ln\frac{1}{|y|}dm(y)-b(X^\varepsilon)H(X^\varepsilon,X^\varepsilon)\kappa_0\delta(\varepsilon)-\mu^{_{\varepsilon,\Lambda}}+C\\
& \le\frac{\kappa_0 b(X^\varepsilon)}{2\pi}\delta(\varepsilon)\ln\frac{1}{\varepsilon}+\frac{\text{max}_{\bar{D}} b}{4\pi}\delta(\varepsilon)\ln\Lambda-b(X^\varepsilon)H(X^\varepsilon,X^\varepsilon)\kappa_0\delta(\varepsilon)-\mu^{_{\varepsilon,\Lambda}}+C,\\
\end{split}
\end{equation*}
where $C$ does not depend on $\varepsilon$ and $\Lambda$. Combining this and Lemma \ref{lem3-6}, we take $\varepsilon<\varepsilon_1(\Lambda)$ and obtain 
$$\psi^{\varepsilon,\Lambda} \le |1-2\delta_0|f^{-1}(\Lambda)+\frac{\text{max}_{\bar{D}} b}{4\pi}\delta(\varepsilon)\ln\Lambda+C,$$
where $C$ is a fixed constant. Thus the proof is complete.
\end{proof}

With Lemma \ref{lem3-7} in hand, we can now eliminate the patch part in \eqref{2-3}.
\begin{lemma}\label{lem3-8}
	Take $\Lambda$ sufficiently large(not depending on $\varepsilon$), then for $\varepsilon>0$ sufficiently small we have
	\begin{equation}\label{3-19}
	|\{x\in D\mid\zeta^{\varepsilon,\Lambda}(x)=\frac{\delta(\varepsilon)\Lambda}{\varepsilon^2}\}|=0.
	\end{equation}
	As a consequence, $\zeta^{\varepsilon,\Lambda}$ has the form
	\begin{equation*}
	\zeta^{\varepsilon,\Lambda}=\frac{\delta(\varepsilon)}{\varepsilon^2}f(\psi^{\varepsilon,\Lambda}).
	\end{equation*}
\end{lemma}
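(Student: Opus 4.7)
The plan is to use the sharp upper bound for $\psi^{\varepsilon,\Lambda}$ in Lemma \ref{lem3-7} together with the saturation threshold $f^{-1}(\Lambda)$ from the profile formula \eqref{2-3}. In view of \eqref{2-3}, the set in \eqref{3-19} coincides (up to a null set) with $\{x\in D\mid \psi^{\varepsilon,\Lambda}(x)\ge f^{-1}(\Lambda)\}$. Outside $\text{supp}(\zeta^{\varepsilon,\Lambda})$ the patch value $\Lambda\delta(\varepsilon)/\varepsilon^2$ cannot be attained since $\zeta^{\varepsilon,\Lambda}=0$ there, so it suffices to rule out $\psi^{\varepsilon,\Lambda}(x)\ge f^{-1}(\Lambda)$ on the support.

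First I would rewrite the conclusion of Lemma \ref{lem3-7} as
\begin{equation*}
\psi^{\varepsilon,\Lambda}(x)-f^{-1}(\Lambda)\le -\sigma f^{-1}(\Lambda)+\frac{\max_{\bar D} b}{4\pi}\,\delta(\varepsilon)\ln\Lambda+C,\quad x\in\text{supp}(\zeta^{\varepsilon,\Lambda}),
\end{equation*}
where $\sigma:=1-|1-2\vartheta_1|\in(0,1]$ by (H2) (recalling $\vartheta_1\in(0,1)$ and the equivalence $(\text{H2})\Leftrightarrow(\text{H2}')$), and $C$ is a constant that depends on neither $\varepsilon$ nor $\Lambda$.

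Next, since $f$ is continuous and strictly increasing on $[0,+\infty)$ by (H1), we have $f^{-1}(\Lambda)\to+\infty$ as $\Lambda\to+\infty$. Hence I can fix $\Lambda_0$ once and for all (independent of $\varepsilon$) so large that
\begin{equation*}
\sigma f^{-1}(\Lambda_0)\ge C+2.
\end{equation*}
With this choice of $\Lambda_0$ fixed, $\delta(\varepsilon)\ln\Lambda_0\to 0$ as $\varepsilon\to 0^+$, so there exists $\varepsilon_2=\varepsilon_2(\Lambda_0)>0$ such that $\frac{\max_{\bar D}b}{4\pi}\delta(\varepsilon)\ln\Lambda_0<1$ for $0<\varepsilon<\varepsilon_2$. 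Combining these estimates yields $\psi^{\varepsilon,\Lambda_0}(x)-f^{-1}(\Lambda_0)\le -1<0$ on $\text{supp}(\zeta^{\varepsilon,\Lambda_0})$, so the patch set $\{\psi^{\varepsilon,\Lambda_0}\ge f^{-1}(\Lambda_0)\}$ is disjoint from the support and therefore has $\nu$-measure zero. Equation \eqref{2-3} then reduces to the smooth formula $\zeta^{\varepsilon,\Lambda_0}=\frac{\delta(\varepsilon)}{\varepsilon^2}f(\psi^{\varepsilon,\Lambda_0})$.

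The main (but mild) obstacle is ensuring the truncation threshold is consistent: one must pick $\Lambda_0$ depending only on $\sigma$, $C$, and $\max_{\bar D}b$ (not on $\varepsilon$), which is possible precisely because the coefficient of $f^{-1}(\Lambda)$ in Lemma \ref{lem3-7} is strictly less than $1$ (this is where the strict inequality $\vartheta_0<1$ in (H2) is essential) and the $\ln\Lambda$ correction carries the prefactor $\delta(\varepsilon)=o_\varepsilon(1)$. Once these two facts are in hand the argument is a one-line comparison.
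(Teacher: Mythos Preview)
Your proof is correct and follows essentially the same approach as the paper's: combine the upper bound from Lemma \ref{lem3-7} with the threshold inequality $\psi^{\varepsilon,\Lambda}\ge f^{-1}(\Lambda)$ on the patch set, then choose $\Lambda$ large via (H1) so that $\sigma f^{-1}(\Lambda)$ dominates the constant $C$, and finally use $\delta(\varepsilon)=o_\varepsilon(1)$ to absorb the $\delta(\varepsilon)\ln\Lambda$ term. Your write-up is in fact slightly cleaner in making explicit that Lemma \ref{lem3-7} is only asserted on $\mathrm{supp}(\zeta^{\varepsilon,\Lambda})$ and that the patch set is contained in the support.
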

\begin{proof}
	Notice that
	\begin{equation}\label{3-20}
	\psi^{\varepsilon,\Lambda}\ge f^{-1}(\Lambda)\ \ \text{on}\ \  \{x\in D\mid\zeta^{\varepsilon,\Lambda}(x)=\frac{\delta(\varepsilon)\Lambda}{\varepsilon^2}\}.
	\end{equation}
	Combining \eqref{3-20} and Lemma \ref{lem3-7}, we conclude that 
	\begin{equation}\label{3-21}
	(1-|1-2\vartheta_0|)f^{-1}(\Lambda)\le \frac{\text{max}_{\bar{D}} b}{4\pi}\delta(\varepsilon)\ln \Lambda+C   \ \ \text{on}\ \  \{x\in D\mid\omega^{\varepsilon,\Lambda}(x)=\frac{\delta(\varepsilon)\Lambda}{\varepsilon^2}\},
	\end{equation}
	where the constant $C$ does not depend on $\varepsilon$ and $\Lambda$. Note that since $\vartheta_0\in(0,1),$ there holds $1-|1-2\vartheta_0|\in(0,1)$. Recalling the assumption $(\text{H1})$ on $f$, We can choose $\Lambda$ sufficiently large such that $$(1-|1-2\vartheta_0|)f^{-1}(\Lambda)\ge C+1,$$
	where $C$ is the same number in \eqref{3-21}. Since $\delta(\varepsilon)\to 0^+$ as $\varepsilon$ goes to zero, there exists $\varepsilon_2(\Lambda)$ such that $\frac{\text{max}_{\bar{D}} b}{4\pi}\delta(\varepsilon)\ln \Lambda<1$ for every $\varepsilon<\varepsilon_2(\Lambda)$. We can take $\varepsilon<\text{min}\{\varepsilon_1(\Lambda),\varepsilon_2(\Lambda)\}$ (recall $\varepsilon_1(\Lambda)$ in Lemma \ref{3-6}) and obtain
	$$|\{x\in D\mid\zeta^{\varepsilon,\Lambda}(x)=\frac{\delta(\varepsilon)\Lambda}{\varepsilon^2}\}|=0.$$
	Hence we complete the proof.
\end{proof}

In the rest of this section, we fix the parameter $\Lambda$ such that \eqref{3-19} holds. To simplify notations, we shall abbreviate $(\mathcal{A}_{\varepsilon,\Lambda},\zeta^{\varepsilon,\Lambda},\psi^{\varepsilon,\Lambda}, \mu^{\varepsilon,\Lambda})$ as $(\mathcal{A}_{\varepsilon},\zeta^{\varepsilon}, \psi^{\varepsilon}, \mu^{\varepsilon})$. We are going to study the location of $supp(\zeta^\varepsilon)$ as $\varepsilon$ goes to zero. 
\begin{lemma}\label{lem3-9}
    As $\varepsilon\to0^+$, if $X^\varepsilon\to X$, then one has $X\in\mathcal{S}$.
\end{lemma}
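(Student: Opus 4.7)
The plan is to derive an upper bound on $\mu^{\varepsilon}$ matching the lower bound of Lemma \ref{lem3-3} up to a vanishing error, but with $\max_{\bar D} b$ replaced by $b(X^\varepsilon)$; comparing the two will force $b(X^\varepsilon)\to\max_{\bar D} b$, hence $X\in\mathcal{S}$.

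First, I would pick any $x^{*}\in \text{supp}(\zeta^\varepsilon)$. By Lemma \ref{lem3-8}, $\zeta^\varepsilon=(\delta(\varepsilon)/\varepsilon^{2})f(\psi^\varepsilon)$ a.e., so having $x^{*}$ in the support together with $(\text{H1})$ forces $\psi^\varepsilon(x^{*})\ge 0$, which by \eqref{2-4} rewrites as
$$\mu^\varepsilon \;\le\; \mathcal{K}\zeta^\varepsilon(x^{*}) + q(x^{*}).$$
Using the kernel representation of Definition \ref{def2} and the elementary bound $G(x,y)\le \frac{1}{2\pi}\ln\frac{1}{|x-y|}+C$, together with the $L^{\infty}$ bound on $R$, this upgrades to
$$\mu^\varepsilon \;\le\; \frac{b(x^{*})}{2\pi}\int_D \ln\frac{1}{|x^{*}-y|}\,\zeta^\varepsilon(y)\,d\nu(y) + O(1).$$

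Second, I would bound the logarithmic integral by the bathtub principle applied to the constraints $0\le \zeta^\varepsilon\le \Lambda\delta(\varepsilon)/\varepsilon^{2}$ and $\int_D\zeta^\varepsilon\,d\nu=\kappa_0\delta(\varepsilon)$: the integral is maximized by packing the mass into a ball about $x^{*}$ of radius of order $\varepsilon$, producing the asymptotic
$$\int_D \ln\frac{1}{|x^{*}-y|}\,\zeta^\varepsilon(y)\,d\nu(y) \;\le\; \kappa_0\delta(\varepsilon)\ln\frac{1}{\varepsilon}+O(\delta(\varepsilon)).$$
From Lemma \ref{lem3-4} we have $|x^{*}-X^\varepsilon|\le R_0\varepsilon^{\gamma}$ for any fixed $\gamma\in(0,1)$, so the H\"older regularity of $b$ yields $|b(x^{*})-b(X^\varepsilon)|=O(\varepsilon^{\gamma\alpha})$, whose contribution after multiplication by $\delta(\varepsilon)\ln(1/\varepsilon)$ is $o_{\varepsilon}(1)$. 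Combining everything,
$$\mu^\varepsilon \;\le\; \frac{\kappa_0 b(X^\varepsilon)}{2\pi}\,\delta(\varepsilon)\ln\frac{1}{\varepsilon}+O(1).$$

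Third, I would combine this with the lower bound from Lemma \ref{lem3-3}. In case (a),
$$\tfrac{\kappa_0\max_{\bar D}b}{2\pi}\delta(\varepsilon)\ln\tfrac{1}{\varepsilon}+O(1) \le \tfrac{\kappa_0 b(X^\varepsilon)}{2\pi}\delta(\varepsilon)\ln\tfrac{1}{\varepsilon}+O(1);$$
the hypothesis $(\ln(1/\varepsilon))^{-1}/\delta(\varepsilon)=o_{\varepsilon}(1)$ makes the $O(1)$ terms negligible after dividing by $\delta(\varepsilon)\ln(1/\varepsilon)$. In case (b) there is an additional correction of order $\delta(\varepsilon)\ln\ln(1/\varepsilon)$ in the lower bound, but the hypothesis $\ln\ln(1/\varepsilon)/(\delta(\varepsilon)\ln(1/\varepsilon))=o_{\varepsilon}(1)$ ensures both this correction and the $O(1)$ remainder divide out. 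In either case we obtain $\max_{\bar D}b\le b(X^\varepsilon)+o_{\varepsilon}(1)$, and passing to the limit gives $b(X)=\max_{\bar D}b$, i.e.\ $X\in\mathcal{S}$.

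The main obstacle is the bathtub estimate in step two: because $\zeta^\varepsilon$ is measured against $d\nu=b\,dm$, the optimal ball has radius $r=\varepsilon\sqrt{\kappa_0/(\Lambda\pi b(x^{*}))}$, so one must verify that the $b(x^{*})$ factor entering $r$ cancels cleanly in the leading $\frac{\kappa_0}{2\pi}\delta(\varepsilon)\ln(1/\varepsilon)$ term while all remainders collapse into $O(\delta(\varepsilon))$. Once this calculation is in hand, the remaining comparison with Lemma \ref{lem3-3} is straightforward bookkeeping.
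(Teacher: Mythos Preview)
Your argument is correct and gives a genuine alternative to the paper's proof. The paper argues by contradiction at the level of the \emph{energy}: assuming $\text{supp}(\zeta^{\varepsilon_i})\subset B_{r_0}(Z)$ with $Z\notin\mathcal{S}$, it bounds $\mathcal{E}(\zeta^{\varepsilon_i})$ from above by
\[
\frac{\kappa_0^{2}\,\sup_{B_{r_0}(Z)}b}{4\pi}\,\delta^{2}(\varepsilon_i)\ln\frac{1}{\varepsilon_i}+C\delta(\varepsilon_i),
\]
using the bathtub principle on the double logarithmic integral, and then contradicts the lower bound of Lemma~\ref{lem3-2}. You instead work one level down, with the \emph{Lagrange multiplier}: from $\psi^{\varepsilon}(x^{*})\ge0$ on the support and a bathtub bound on the single integral $\int_D\ln\frac{1}{|x^{*}-y|}\zeta^\varepsilon\,d\nu$ you get the matching upper bound $\mu^\varepsilon\le\frac{\kappa_0 b(X^\varepsilon)}{2\pi}\delta(\varepsilon)\ln\frac{1}{\varepsilon}+O(1)$, and compare with Lemma~\ref{lem3-3}. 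Since Lemma~\ref{lem3-3} is itself derived from Lemma~\ref{lem3-2}, the two routes are morally the same comparison packaged differently; yours is arguably more direct once Lemma~\ref{lem3-8} is in hand, while the paper's avoids any appeal to the pointwise form of $\zeta^\varepsilon$.

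One small correction to your ``obstacle'' paragraph: in the bathtub step the clean way to proceed is to enlarge the constraint class to $0\le \zeta b\le \Lambda(\max_{\bar D}b)\delta(\varepsilon)/\varepsilon^{2}$ (as in Lemma~\ref{lem4-1}), so the optimal radius carries $\max_{\bar D}b$ rather than $b(x^{*})$; either way the leading term is $\kappa_0\delta(\varepsilon)\ln\frac{1}{\varepsilon}$ and the difference is absorbed in the $O(\delta(\varepsilon))$ remainder, so your conclusion is unaffected.
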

\begin{proof}
	The proof is by contradiction. By Lemma \ref{lem3-4}, we can assume there exist a $r_0>0$, a point $Z\notin\mathcal{S}$ and a subsequence $\{\varepsilon_i\}_i^\infty$ such that $\varepsilon_i\to 0^+$ as $i\to +\infty$ and $supp(\zeta^{\varepsilon_i})\subset B_{r_0}(Z)\subset D\setminus\mathcal{S}_{r_0}$. A simple calculation yields to 
	\begin{equation*}
	\begin{split}
	\mathcal{E}(\zeta^{\varepsilon_i})&=\frac{1}{4\pi}\int_D\int_D b(x)\ln\frac{1}{|x-y|}\zeta^{\varepsilon_i}(x)\zeta^{\varepsilon_i}(y)d\nu(x)d\nu(y)-\frac{1}{2}\int_D\int_D b(x)H(x,y)\zeta^{\varepsilon_i}(x)\zeta^{\varepsilon_i}(y)d\nu(x)d\nu(y)\\
	&\ \ \ \ \ +\int_Dq(x)\zeta^{\varepsilon_i}(x)d\nu(x)-\frac{\delta(\varepsilon_i)}{{\varepsilon_i}^{2}}\int_D F(\frac{{\varepsilon_i}^{2}}{\delta(\varepsilon_i)}\zeta^{\varepsilon_i}(x))d\nu(x)\\
	&\le\frac{\kappa_0^2\text{sup}_{B_{r_0}(Z)}b}{4\pi}\delta^2(\varepsilon_i)\ln\frac{1}{\varepsilon_i}+\frac{\text{sup}_{B_{r_0}(Z)}b}{4\pi}\int_D\int_D\ln\frac{\varepsilon_i}{|x-y|}\zeta^{\varepsilon_i}(x)\zeta^{\varepsilon_i}(y)d\nu(x)d\nu(y)+C\delta(\varepsilon_i).\\
	\end{split}
	\end{equation*}
    Using again the bathtub principle, we deduce
	\begin{equation*}
	\begin{split}
	\int_D\int_D\ln\frac{\varepsilon_i}{|x-y|}\zeta^{\varepsilon_i}(x)\zeta^{\varepsilon_i}(y)d\nu(x)d\nu(y)&\le\frac{\Lambda^2(\text{max}_{\bar{D}} b)^2}{{\varepsilon_i}^{4}}\delta^2(\varepsilon_i)\int_{B_{R_0 \varepsilon_i}(Z)}\int_{B_{R_0 \varepsilon_i}(Z)}\ln\frac{\varepsilon_i}{|x-y|}dm(x)dm(y)\\
	&=\Lambda^2(\text{max}_{\bar{D}} b)^2\delta^2(\varepsilon_i)\int_{B_{R_0}(0)}\int_{B_{R_0}(0)}\ln\frac{1}{|x-y|}dm(x)dm(y)\\	
	&=C\delta^2(\varepsilon_i),
	\end{split}
	\end{equation*}
	where $R_0$ is the one in Lemma \ref{lem3-4} and $C$ does not depend on $\varepsilon_i$ and $\Lambda$. Taking $i$ sufficiently large, we have
	\begin{equation}\label{3-22}
	\mathcal{E}(\zeta^{\varepsilon_i})\le\frac{\kappa_0^2\text{sup}_{B_{r_0}(Z)}b}{4\pi}\delta^2(\varepsilon_i)\ln\frac{1}{\varepsilon_i}+C\delta(\varepsilon_i).
	\end{equation}
	But by lemma \ref{lem3-2}, there holds
	\begin{equation}\label{3-23}
	\mathcal{E}(\zeta^\varepsilon)\ge \frac{\kappa_0^2\text{max}_{\bar{D}} b}{4\pi}\delta^2(\varepsilon)\ln{\frac{1}{\varepsilon}}-\frac{\alpha\kappa_0^2\text{max}_{\bar{D}} b}{4\pi}\delta^2(\varepsilon)\ln\ln{\frac{1}{\varepsilon}}+C\delta(\varepsilon).
	\end{equation}	
    Recalling restrictions $(\text{a})$ and $(\text{b})$ in Theorem \ref{thm1}, since $B_{r_0}(Z)\subset D\setminus\mathcal{S}_{r_0}$, \eqref{3-22} contradicts \eqref{3-23}. Thus we have finished the proof.
\end{proof}

From the above proofs , it is not hard to obtain the following asymptotic expansions.
\begin{lemma}\label{lem3-10}
	If $\mathcal{S}\cap D\not=\varnothing$, one has
	\begin{equation*}
	\mu^\varepsilon= \frac{\kappa_0 \text{max}_{\bar{D}} b}{2\pi}\delta(\varepsilon)\ln \frac{1}{\varepsilon}+O_\varepsilon(1).
	\end{equation*}
	If $\mathcal{S}\subset \partial D$, one has
	\begin{equation*}
	\mu^\varepsilon= \frac{\kappa_0 \text{max}_{\bar{D}} b}{2\pi}\delta(\varepsilon)\ln \frac{1}{\varepsilon}- O_\varepsilon(1)\cdot\delta(\varepsilon)\cdot\ln\ln\frac{1}{\varepsilon}+O_\varepsilon(1).
	\end{equation*}
\end{lemma}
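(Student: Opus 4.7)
The plan is to pair the lower bounds already proved in Lemma \ref{lem3-3} with a matching upper bound, of the same shape, extracted from the proof of Lemma \ref{lem3-7}. Now that $\Lambda$ has been frozen once and for all by Lemma \ref{lem3-8}, every $\Lambda$-dependent constant appearing in those two lemmas---in particular $|1-2\vartheta_1|f^{-1}(\Lambda)$ and $\frac{\text{max}_{\bar D}b}{4\pi}\delta(\varepsilon)\ln\Lambda$ (which is $o_\varepsilon(1)$ since $\delta(\varepsilon)=o_\varepsilon(1)$)---gets absorbed into an $O_\varepsilon(1)$ error. Applied this way, Lemma \ref{lem3-3} immediately delivers the claimed lower bound for $\mu^\varepsilon$ in both cases (a) and (b).

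For the upper bound I would choose any $x_0\in\text{supp}(\zeta^\varepsilon)$. By Lemma \ref{lem3-8} we have $\zeta^\varepsilon=(\delta(\varepsilon)/\varepsilon^2)f(\psi^\varepsilon)$, and since $f$ vanishes on $(-\infty,0]$, this forces $\psi^\varepsilon(x_0)\ge 0$; equivalently $\mu^\varepsilon\le\mathcal{K}\zeta^\varepsilon(x_0)+q(x_0)$. The bathtub-principle computation already carried out in the proof of Lemma \ref{lem3-7}---using the integral kernel representation of $\mathcal{K}$ from Definition \ref{def2}, the diameter bound from Lemma \ref{lem3-4}, and H\"older continuity of $b$ to replace $b(x_0)$ by $b(X^\varepsilon)$ modulo $O(\varepsilon^\alpha)$---then produces
\[\mu^\varepsilon\le \frac{\kappa_0\, b(X^\varepsilon)}{2\pi}\,\delta(\varepsilon)\ln\frac{1}{\varepsilon}-b(X^\varepsilon)H(X^\varepsilon,X^\varepsilon)\kappa_0\delta(\varepsilon)+O_\varepsilon(1).\]
Since $b(X^\varepsilon)\le\text{max}_{\bar D}b$, the leading-order coefficient is already the one in the statement, and it only remains to quantify the middle correction.

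For that, Lemma \ref{lem3-1} controls $H(X^\varepsilon,X^\varepsilon)$ through $\text{dist}(X^\varepsilon,\partial D)$, which is in turn quantified in each case by Lemma \ref{lem3-5}. In case (a) the distance is bounded below by a fixed constant $\eta>0$, so $H(X^\varepsilon,X^\varepsilon)=O(1)$ and the middle term is $O_\varepsilon(\delta(\varepsilon))=O_\varepsilon(1)$, matching the lower bound. In case (b) the distance is only $\ge C_d/(\ln\tfrac{1}{\varepsilon})^{\gamma_0}$, so $H(X^\varepsilon,X^\varepsilon)=O(\ln\ln\tfrac{1}{\varepsilon})$, producing an error of exactly the form $O_\varepsilon(1)\cdot\delta(\varepsilon)\ln\ln\tfrac{1}{\varepsilon}$ required by the statement. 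The main obstacle is really just the bookkeeping: one has to check carefully that every $\Lambda$-dependent quantity is absorbed into $O_\varepsilon(1)$ once $\Lambda$ is fixed, and that the asymmetric upper and lower bounds (the upper one missing the $\ln\ln$ contribution, the lower one having it explicitly in case (b)) do coalesce into the single compact $O_\varepsilon$-expression claimed.
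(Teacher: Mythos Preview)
Your proposal is correct and follows essentially the same route as the paper, which simply cites Lemmas \ref{lem3-2}, \ref{lem3-5} and \ref{lem3-9}; your write-up is in effect a detailed unpacking of that one-line proof through Lemmas \ref{lem3-3} and \ref{lem3-7}. One small simplification worth noting: you correctly observe that the trivial inequality $b(X^\varepsilon)\le\max_{\bar D}b$ already suffices for the upper bound, so Lemma \ref{lem3-9} (the concentration $X^\varepsilon\to\mathcal{S}$) is not actually needed here, even though the paper invokes it.
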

\begin{proof}
	Just combine Lemma \ref{lem3-2}, Lemma \ref{lem3-5} and Lemma \ref{lem3-9} to get the expansions.
\end{proof}

By now, we can obtain the asymptotic shape of $\zeta^\varepsilon$. Recall that we define the rescaled version of $\zeta^\varepsilon$ to be 
$$\xi^\varepsilon(x)=\frac{\varepsilon^2}{\delta(\varepsilon)}\zeta^\varepsilon(X^\varepsilon+\varepsilon x).$$ 
\begin{lemma}\label{lem3-11}
	Every accumulation points of $\xi^\varepsilon(x)$ as $\varepsilon \to 0^+$, in the weak topology of $L^2$, are radially nonincreasing functions.
\end{lemma}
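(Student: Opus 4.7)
The plan is to show that any weak $L^2$ accumulation point of $\xi^\varepsilon$ equals its own Schwarz symmetric decreasing rearrangement, by combining the maximizing property of $\zeta^\varepsilon$ in $\mathcal{A}_{\varepsilon,\Lambda}$ with Riesz's rearrangement inequality applied to the logarithmic self-interaction kernel.

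\textbf{Step 1 (weak compactness).} By construction $0\le\xi^\varepsilon\le\Lambda$ pointwise, and a change of variables gives
\[
\int_{\mathbb{R}^2}\xi^\varepsilon\,dm=\frac{1}{\delta(\varepsilon)}\int_D\zeta^\varepsilon\,dm\le\frac{\kappa_0}{b(X^\varepsilon)}+o_\varepsilon(1),
\]
where we used Lemma \ref{lem3-9} and Lemma \ref{lem3-5} to see that $\mathrm{supp}(\zeta^\varepsilon)$ is concentrated in a region where $b\ge\tfrac12\max_{\bar D}b$ for small $\varepsilon$. Hence $\{\xi^\varepsilon\}$ is uniformly bounded in $L^1\cap L^\infty\subset L^2(\mathbb{R}^2)$, and along a subsequence $\xi^{\varepsilon_k}\rightharpoonup\xi^0$ weakly in $L^2$.

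\textbf{Step 2 (rearranged competitor and energy comparison).} Let $\eta^\varepsilon:=(\xi^\varepsilon)^*$ denote the symmetric decreasing rearrangement of $\xi^\varepsilon$ on $\mathbb{R}^2$, and define
\[
\tilde\zeta^\varepsilon(y):=\frac{\delta(\varepsilon)}{\varepsilon^2}\,\eta^\varepsilon\!\Bigl(\frac{y-X^\varepsilon}{\varepsilon}\Bigr),
\]
multiplied by a $1+O(\varepsilon^\alpha)$ correction (possibly with a small truncation) to enforce $\tilde\zeta^\varepsilon\in\mathcal{A}_{\varepsilon,\Lambda}$. Using the kernel representation in Definition \ref{def2}, rescaling $x=(y-X^\varepsilon)/\varepsilon$, and isolating the explicit $\tfrac{\kappa_0^2 b(X^\varepsilon)}{4\pi}\delta^2(\varepsilon)\ln(1/\varepsilon)$ term shared by both energies (cf.\ Lemma \ref{lem3-2}), one gets
\[
\mathcal{E}(\tilde\zeta^\varepsilon)-\mathcal{E}(\zeta^\varepsilon)=\frac{b(X^\varepsilon)^3\,\delta^2(\varepsilon)}{4\pi}\bigl[I(\eta^\varepsilon)-I(\xi^\varepsilon)\bigr]+o(\delta^2(\varepsilon)),
\]
where $I(u):=\iint \ln(1/|x-y|)\,u(x)u(y)\,dm(x)\,dm(y)$. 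The $o(\delta^2(\varepsilon))$ remainder absorbs: the spatial variation of $b$, $q$, and $H$ across a support of diameter $O(\varepsilon^\gamma)$ (Lemma \ref{lem3-4}); the contribution $\int q\,\zeta^\varepsilon\,d\nu=q(X^\varepsilon)\kappa_0\delta(\varepsilon)+o(\delta(\varepsilon))$, which is rearrangement invariant to this order; and the convex piece $\mathcal{F}_\varepsilon$, which depends only on the distribution function of $\zeta^\varepsilon$ with respect to $d\nu\simeq b(X^\varepsilon)\,dm$ and hence is rearrangement invariant modulo $\varepsilon^\alpha\delta(\varepsilon)$.

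\textbf{Step 3 (Riesz inequality and passage to the limit).} Writing $\ln(1/|x-y|)=\ln(R/|x-y|)-\ln R$ with $R$ larger than the support diameter and noting that the constant $\ln R$ piece cancels by equimeasurability ($\|\eta^\varepsilon\|_1=\|\xi^\varepsilon\|_1$), Riesz's rearrangement inequality yields $I(\eta^\varepsilon)\ge I(\xi^\varepsilon)$. Combined with the maximality $\mathcal{E}(\tilde\zeta^\varepsilon)\le\mathcal{E}(\zeta^\varepsilon)$, we conclude
\[
0\le I(\eta^\varepsilon)-I(\xi^\varepsilon)=o_\varepsilon(1).
\]
On a fixed bounded set, the kernel $\ln|\cdot|^{-1}$ defines a Hilbert--Schmidt, hence compact, integral operator on $L^2$, so the quadratic form $I$ is sequentially continuous under weak $L^2$ convergence of uniformly compactly supported functions. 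Passing to the limit along $\xi^{\varepsilon_k}\rightharpoonup\xi^0$ and $\eta^{\varepsilon_k}\rightharpoonup\eta^0$ (with $\eta^0$ radially symmetric and nonincreasing as the weak limit of such functions), one obtains $I(\eta^0)=I(\xi^0)$. The rigidity case of Riesz's inequality for the strictly symmetric decreasing log kernel (Lieb--Loss, \S3.9) then forces $\xi^0$ to coincide with its own symmetric decreasing rearrangement up to translation; the translation vanishes because $X^\varepsilon$ is by construction the center of mass of $\zeta^\varepsilon$, so $\int x\,\xi^\varepsilon(x)\,dm(x)=0$, a first-moment identity that passes to the weak limit (test against $x\,\mathbf{1}_{B_R}\in L^2$). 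Hence $\xi^0=(\xi^0)^*$ is radially nonincreasing.

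\textbf{Main obstacle.} The most delicate technical point is securing a uniform-in-$\varepsilon$ compact-support bound on $\xi^\varepsilon$: Lemma \ref{lem3-4} only yields $\mathrm{diam}(\mathrm{supp}(\xi^\varepsilon))\le R_0\varepsilon^{\gamma-1}\to\infty$, which is too weak for the Hilbert--Schmidt continuity step used in Step 3. I would close this gap either (a) by refining the bathtub argument in Lemma \ref{lem3-4} using the sharper multiplier estimate from Lemma \ref{lem3-6} to extract a uniform support radius $R_1=R_1(\kappa_0,\Lambda,b(X))$, or (b) by a tightness argument ruling out escape of mass to infinity in the weak limit, using $\xi^\varepsilon\le\Lambda$ and the vanishing first moment. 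Once this uniform support bound is in hand, all continuity steps above apply verbatim.
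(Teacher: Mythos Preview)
Your overall strategy coincides with the paper's: build the symmetric-decreasing competitor, use maximality of $\zeta^\varepsilon$ to force near-equality in Riesz's inequality, pass to the weak limit, and invoke the rigidity case (the paper cites Burchard--Guo rather than Lieb--Loss) together with the vanishing first moment to pin down the translation. The paper does not spell this out for Lemma~\ref{lem3-11} itself---it only refers to \cite{CZZ}---but the parallel argument written out for Lemma~\ref{lem4-12} follows exactly this template.

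You are right to isolate the uniform-support bound as the crux, and you should be aware that the paper does not resolve it either. In \cite{CZZ} the setting is $\delta\equiv 1$, and there the Turkington bathtub argument yields $\mathrm{diam}(\mathrm{supp}\,\zeta^\varepsilon)\le R_0\varepsilon$ with $R_0$ \emph{fixed}, so the rescaled supports are uniformly bounded and the Hilbert--Schmidt continuity step is legitimate. Here, with $\delta(\varepsilon)\to 0$, the same argument only produces the $\varepsilon^\gamma$ bound of Lemma~\ref{lem3-4}, and the paper's own Lemma~\ref{lem4-12} contains the same unjustified ``letting $\varepsilon\to 0^+$'' step. Your proposed fix~(a) via Lemma~\ref{lem3-6} will not close the gap as stated: the $O(1)$ constants in the multiplier lower bound (the $|1-2\vartheta_1|f^{-1}(\Lambda)$ and $C$ terms) survive, and after dividing by $\kappa_0\delta(\varepsilon)$ in the bathtub step they force $\ln R\gtrsim 1/\delta(\varepsilon)\to\infty$; no iteration of the existing estimates removes this. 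Fix~(b) is the more viable route, but it requires a genuine tightness argument---for instance, showing that any escaping mass would strictly lower the logarithmic self-energy by an amount incompatible with $I(\eta^\varepsilon)-I(\xi^\varepsilon)=o(1)$---and this is additional content beyond what either you or the paper supplies.
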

\begin{proof}
	The proof of Lemma \ref{lem3-11} is contained in \cite{CZZ}, inspired by a result in Burchard-Guo \cite{BG}.
\end{proof}

\section{Proof of Theorem \ref{thm2}}
In Theorem \ref{thm2}, the vanishing rate $\delta(\varepsilon)$ is of critical value $1/\ln\frac{1}{\varepsilon}$ and we assume $\phi(x)=\frac{\kappa_0}{4\pi}b(x)+q(x)$ attains its maximum at a unique point $\hat{x}\in D$. We first give a upper bound of $\mathcal{K}\zeta^{\varepsilon,\Lambda}$, so that we can eliminate the patch part at the beginning. 

\begin{lemma}\label{lem4-1}
    One has 
    \begin{equation}\label{4-1}
    \mathcal{N}\zeta^{\varepsilon,\Lambda}:=\int_D\ln\frac{diam(D)}{|x-y|}\zeta^{\varepsilon,\Lambda}(y)d\nu(y)\le\kappa_0+C\delta(\varepsilon),	
    \end{equation}
    where $C>0$ does not depend on $\varepsilon$. As a result, for $\varepsilon$ sufficiently small there holds
    \begin{equation}\label{4-2}
    \mathcal{K}\zeta^{\varepsilon,\Lambda}\le \frac{\text{max}_{\bar{D}}b}{2\pi}\kappa_0+1. 
    \end{equation}
    Thus if we take $\Lambda$ sufficiently large, the patch part of $\zeta^{\varepsilon,\Lambda}$ vanishes and \eqref {2-3} can be substituted by
    \begin{equation}\label{4-3}
    \zeta^{\varepsilon,\Lambda}=\frac{\delta(\varepsilon)}{\varepsilon^2}f(\psi^{\varepsilon,\Lambda}){\chi}_{\{x\in D \mid \psi^{\varepsilon,\Lambda}(x)>0\}}.
    \end{equation}
\end{lemma}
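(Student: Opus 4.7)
The plan is to extract a sharp leading constant from the logarithmic potential $\mathcal{N}\zeta^{\varepsilon,\Lambda}$ via the bathtub principle, feed the result into the integral kernel representation of $\mathcal{K}$, and then eliminate the patch set by combining the resulting upper bound on $\psi^{\varepsilon,\Lambda}$ with the $\Lambda$-independent lower bound on $\mu^{\varepsilon,\Lambda}$ already provided by Lemma \ref{lem2-2}. The criticality $\delta(\varepsilon)\ln\frac{1}{\varepsilon}=1$ is essential here: a slower vanishing of $\delta$ would send the leading coefficient in \eqref{4-1} to infinity and collapse the whole strategy.

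For the bathtub step, observe that $y\mapsto\ln\frac{diam(D)}{|x-y|}$ is radially decreasing about $x$, so by Lieb--Loss \S1.14, under the constraints $0\le\zeta^{\varepsilon,\Lambda}\le\Lambda\delta(\varepsilon)/\varepsilon^2$ and $\int\zeta^{\varepsilon,\Lambda}d\nu=\kappa_0\delta(\varepsilon)$, the integral is largest when $\zeta^{\varepsilon,\Lambda}$ equals its $L^\infty$ bound on $B_{r_x}(x)\cap D$, with $r_x$ determined by $\nu(B_{r_x}(x))=\kappa_0\varepsilon^2/\Lambda$. The inequality $\nu(B_r(x))\le\text{max}_{\bar{D}}b\cdot\pi r^2$ forces $r_x\ge c\varepsilon$ with $c$ independent of $\varepsilon$, so splitting
\[
\ln\frac{diam(D)}{|x-y|}=\ln\frac{diam(D)}{r_x}+\ln\frac{r_x}{|x-y|}
\]
inside the ball reduces the estimate to two elementary radial integrals: the first contributes $\kappa_0\varepsilon^2\Lambda^{-1}\ln\frac{diam(D)}{r_x}$, and the second is bounded by a universal multiple of $\text{max}_{\bar{D}}b\cdot r_x^2$. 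Multiplying by $\Lambda\delta(\varepsilon)/\varepsilon^2$ and using $r_x\in[c\varepsilon,C\varepsilon]$ together with $\delta(\varepsilon)\ln\frac{1}{\varepsilon}=1$ yields
\[
\mathcal{N}\zeta^{\varepsilon,\Lambda}(x)\le\kappa_0\delta(\varepsilon)\ln\frac{1}{\varepsilon}+C\delta(\varepsilon)=\kappa_0+C\delta(\varepsilon),
\]
uniformly in $x\in D$. The delicate point is that the $b$-weight hidden in $d\nu$ cancels against the defining equation for $r_x$ precisely enough to leave the leading coefficient exactly $\kappa_0$, with no dependence on $\Lambda$ or on the local value $b(x)$.

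Once \eqref{4-1} is in hand, \eqref{4-2} follows quickly from the integral kernel representation in Definition \ref{def2}: the bound $G(x,y)\le\frac{1}{2\pi}\ln\frac{diam(D)}{|x-y|}$ from Lemma \ref{lem3-1} and $\|R\|_{L^\infty(D\times D)}<\infty$ give
\[
\mathcal{K}\zeta^{\varepsilon,\Lambda}(x)\le\frac{b(x)}{2\pi}\mathcal{N}\zeta^{\varepsilon,\Lambda}(x)+\|R\|_\infty\kappa_0\delta(\varepsilon)\le\frac{\text{max}_{\bar{D}}b}{2\pi}\kappa_0+C\delta(\varepsilon),
\]
which is $\le\frac{\text{max}_{\bar{D}}b}{2\pi}\kappa_0+1$ for $\varepsilon$ small.

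For the patch elimination, suppose $\{\zeta^{\varepsilon,\Lambda}=\Lambda\delta(\varepsilon)/\varepsilon^2\}$ has positive measure. On this set the Euler--Lagrange conditions \eqref{2-6} force $\psi^{\varepsilon,\Lambda}\ge f^{-1}(\Lambda)$, while \eqref{4-2}, the inequality $q\le\text{max}_{\bar{D}}q$, and the $\Lambda$-free lower bound \eqref{2-5} for $\mu^{\varepsilon,\Lambda}$ combine to give an absolute ceiling
\[
\psi^{\varepsilon,\Lambda}=\mathcal{K}\zeta^{\varepsilon,\Lambda}+q-\mu^{\varepsilon,\Lambda}\le M,
\]
with $M$ depending only on $D,b,q,\kappa_0,f(0^+)$ but independent of $\Lambda$. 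Picking $\Lambda$ so large that $f^{-1}(\Lambda)>M$---possible by the strict monotonicity of $f$ in (H1)---produces a contradiction, so the patch set must have measure zero and \eqref{4-3} follows. The main obstacle is the bathtub computation in the first step; once it delivers precisely the constant $\kappa_0$, the rest of the argument is a clean $\Lambda$-large contradiction.
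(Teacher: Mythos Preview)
Your strategy---bathtub principle for the logarithmic potential, then the kernel representation of $\mathcal{K}$, then patch elimination via the $\Lambda$-independent lower bound \eqref{2-5} on $\mu^{\varepsilon,\Lambda}$---is exactly the paper's approach, and the final two steps are carried out just as in the paper.

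There is, however, a uniformity gap in your bathtub computation. You apply the bathtub principle with respect to the weighted measure $d\nu=b\,dm$, obtaining a competitor supported on $B_{r_x}(x)$ with $\nu(B_{r_x}(x))=\kappa_0\varepsilon^2/\Lambda$. The lower bound $r_x\ge c\varepsilon$ follows as you say from $\nu(B_r)\le(\max_{\bar D}b)\pi r^2$, and this is all you need for the first term in your splitting. But for the second term you invoke $r_x\le C\varepsilon$ without justification, and this can fail uniformly in $x$: if $b$ degenerates at $\partial D$ (which the hypotheses allow), then for $x$ close to the boundary $\nu(B_r(x))\ll r^2$ and $r_x$ may be much larger than $\varepsilon$. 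Your crude bound $\int_{B_{r_x}}\ln\frac{r_x}{|x-y|}\,d\nu\le(\max_{\bar D}b)\cdot\tfrac{\pi}{2}r_x^2$ then blows up after multiplication by $\Lambda\delta(\varepsilon)/\varepsilon^2$.

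The paper sidesteps this by first enlarging $\mathcal{A}_{\varepsilon,\Lambda}$ to the class $\mathcal{B}_{\varepsilon,\Lambda}$ in which the pointwise constraint is placed on the product $\zeta b$ rather than on $\zeta$. Writing $w=\zeta b$, the bathtub problem becomes purely Lebesgue: maximize $\int\ln\frac{diam(D)}{|x-y|}\,w(y)\,dm(y)$ under $0\le w\le C_B\delta(\varepsilon)/\varepsilon^2$ and $\int w\,dm=\kappa_0\delta(\varepsilon)$. The maximizer is then $C_B\delta(\varepsilon)\varepsilon^{-2}\chi_{B_{\varepsilon\sqrt{\kappa_0/(\pi C_B)}}(x)}$ with radius \emph{exactly} proportional to $\varepsilon$, uniformly in $x$, and the computation goes through cleanly. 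Your argument can be repaired in the same way (or, equivalently, by a second rearrangement step bounding $\int_{B_{r_x}}g\,b\,dm$ by $(\max b)\int_{B_\rho}g\,dm$ with $\pi\rho^2\max b=\nu(B_{r_x})$), but as written the claim $r_x\in[c\varepsilon,C\varepsilon]$ is the missing step.
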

\begin{proof}
	By Lemma \ref{lem2-1}, there holds 
	$$\zeta^{\varepsilon,\Lambda}\in\mathcal{A}_{\varepsilon,\Lambda}=\{\zeta\in L^\infty(D)~|~ 0\le \zeta \le \frac{\Lambda\delta(\varepsilon)}{\varepsilon^2}~ \mbox{ a.e. in }D, \int_{D}\zeta(x)d\nu (x)=\kappa_0\delta(\varepsilon) \}.$$ 
	To give a upper bound of $\mathcal{N}\zeta^{\varepsilon,\Lambda}$, we can enlarge $\mathcal{A}_{\varepsilon,\Lambda}$ to be $$\mathcal{B}_{\varepsilon,\Lambda}:=\{\zeta\in L^\infty(D)~|~ 0\le \zeta b(x) \le \frac{\Lambda\text{max}_{\bar{D}}b\delta(\varepsilon)}{\varepsilon^2}=\frac{C_B\delta(\varepsilon)}{\varepsilon^2}~ \mbox{ a.e. in }D, \int_{D}\zeta(x)d\nu (x)=\kappa_0\delta(\varepsilon) \}.$$
	It is obvious that $\text{sup}_{\mathcal{B}_{\varepsilon,\Lambda}}\mathcal{N}\zeta\ge\text{sup}_{\mathcal{A}_{\varepsilon,\Lambda}}\mathcal{N}\zeta$. By the bathtub principle \cite{LL}, we obtains
	\begin{equation*}
	\begin{split}
	\mathcal{N}\zeta^{\varepsilon,\Lambda}&\le\text{sup}_{\mathcal{B}_{\varepsilon,\Lambda}}\mathcal{N}\zeta\\
	&\le\frac{C_B\delta(\varepsilon)}{\varepsilon^2}\int_{B_{\varepsilon\sqrt{\kappa_0/C_b\pi}}(x)}\ln\frac{diam(D)}{|x-y|}dm(y)\\
	&\le \frac{C_B\delta(\varepsilon)}{\varepsilon^2}\int_{B_{\varepsilon\sqrt{\kappa_0/C_b\pi}}(0)}\ln\frac{\varepsilon}{|y|}dm(y)+\frac{C_B\delta(\varepsilon)}{\varepsilon^2}\ln\frac{diam(D)}{\varepsilon}\int_{B_{\varepsilon\sqrt{\kappa_0/C_b\pi}}(0)}dm(y)\\
	&\le \frac{1}{2}\kappa_0\delta(\varepsilon)(1+2\ln(diam(D))+\ln\frac{C_B\pi}{\kappa_0})+\kappa_0\delta(\varepsilon)\ln\frac{1}{\varepsilon}\\
	&=\kappa_0+C\delta(\varepsilon),
	\end{split}
	\end{equation*}
	where $C$ is a positive constant not depend on $\varepsilon$. Since $C_B=\Lambda\text{max}_{\bar{D}}b$, we can take $\varepsilon<\varepsilon_3(\Lambda)$ such that $$\frac{\text{max}_{\bar{D}}b}{2\pi}\mathcal{N}\zeta^{\varepsilon,\Lambda}+\kappa_0\delta(\varepsilon)||R||_{L^\infty(D\cdot D)}< \frac{\text{max}_{\bar{D}}b}{2\pi}\kappa_0+1.$$
	By Definition \ref{def2}, we have $\mathcal{K}\zeta^{\varepsilon,\Lambda}\le\frac{b(x)}{2\pi}\mathcal{N}\zeta^{\varepsilon,\Lambda}+\int_DR(x,y)\zeta^{\varepsilon,\Lambda}d\nu(y)$, and hence obtain \eqref{4-2}. If we fix $\Lambda$ sufficiently large such that
	\begin{equation}\label{4-4}
	f^{-1}(\Lambda)>\text{max}_{\bar{D}}\mathcal{K}\zeta^{\varepsilon,\Lambda}+\text{max}_{\bar{D}}q-\text{min}\mu^{\varepsilon,\Lambda},
	\end{equation}
	we obtain
	$$f^{-1}(\Lambda)>\mathcal{K}\zeta^{\varepsilon,\Lambda}(x)+q(x)-\mu^{\varepsilon,\Lambda}\ \ \ \ \forall x\in D.$$
	So it is obvious that 
	\begin{equation}\label{4-5}
	|\{x\in D\mid\zeta^{\varepsilon,\Lambda}(x)=\frac{\delta(\varepsilon)\Lambda}{\varepsilon^2}\}|=0,
	\end{equation}
	which indicates the patch part of $\zeta^{\varepsilon,\Lambda}$ vanishes and \eqref{4-3} holds. Thus the proof is complete.	
\end{proof}

From now on, we fix $\Lambda$ sufficiently large such that \eqref{4-4} holds, so we can abbreviate $(\mathcal{A}_{\varepsilon,\Lambda},\zeta^{\varepsilon,\Lambda},\psi^{\varepsilon,\Lambda}, \mu^{\varepsilon,\Lambda})$ as $(\mathcal{A}_{\varepsilon},\zeta^{\varepsilon}, \psi^{\varepsilon}, \mu^{\varepsilon})$. We are to obtain an upper bound of $E_q(\zeta^\varepsilon)$ in the following Lemma.
\begin{lemma}\label{lem4-2}
	For $\varepsilon$ sufficiently small one has
	\begin{equation}\label{4-6}
	\frac{1}{2}\mathcal{K}\zeta^\varepsilon+q\le\phi(\hat{x})+C\delta(\varepsilon)
	\end{equation}
	and
	\begin{equation}\label{4-7}
	E_q(\zeta^\varepsilon)=\frac{1}{2}\int_D \zeta^\varepsilon\mathcal{K}\zeta^\varepsilon d\nu+\int_D q\zeta^\varepsilon d\nu(x)\le\phi(\hat{x})\kappa_0\delta(\varepsilon)+C\delta^2(\varepsilon)
	\end{equation}
	where $C>0$ does not depend on $\varepsilon$.
\end{lemma}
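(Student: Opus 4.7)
The plan is to first establish the pointwise bound \eqref{4-6} and then integrate it against $\zeta^\varepsilon d\nu$ to get \eqref{4-7}. The key input is Lemma \ref{lem4-1}, which already provides the near-sharp bound $\mathcal{N}\zeta^\varepsilon \le \kappa_0 + C\delta(\varepsilon)$ on the logarithmic potential. So the proof is essentially a bookkeeping exercise using the Green's function decomposition from Definition \ref{def2}, together with the identity $G(x,y) = \frac{1}{2\pi}\ln\frac{\text{diam}(D)}{|x-y|} - H(x,y)$ introduced in Section 3.

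First I would decompose $\mathcal{K}\zeta^\varepsilon$ as
\begin{equation*}
\mathcal{K}\zeta^\varepsilon(x) = \frac{b(x)}{2\pi}\,\mathcal{N}\zeta^\varepsilon(x) - b(x)\int_D H(x,y)\zeta^\varepsilon(y)\,d\nu(y) + \int_D R(x,y)\zeta^\varepsilon(y)\,d\nu(y).
\end{equation*}
Applying Lemma \ref{lem4-1} to the first term yields $\frac{b(x)}{2\pi}\mathcal{N}\zeta^\varepsilon(x) \le \frac{\kappa_0 b(x)}{2\pi} + C\delta(\varepsilon)$. For the remaining two terms I would use that, by the lower bound in Lemma \ref{lem3-1}, $H$ is bounded below by a constant on $D\times D$, and $R$ is globally bounded by Definition \ref{def2}; combined with the mass constraint $\int_D \zeta^\varepsilon d\nu = \kappa_0\delta(\varepsilon)$, both remainder terms are $O(\delta(\varepsilon))$. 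Adding these estimates gives
\begin{equation*}
\tfrac{1}{2}\mathcal{K}\zeta^\varepsilon(x) + q(x) \le \tfrac{\kappa_0 b(x)}{4\pi} + q(x) + C\delta(\varepsilon) = \phi(x) + C\delta(\varepsilon) \le \phi(\hat{x}) + C\delta(\varepsilon),
\end{equation*}
which is exactly \eqref{4-6}.

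For \eqref{4-7}, the idea is to rewrite the energy as $E_q(\zeta^\varepsilon) = \int_D \bigl(\tfrac{1}{2}\mathcal{K}\zeta^\varepsilon + q\bigr)\zeta^\varepsilon\,d\nu$. Since $\zeta^\varepsilon \ge 0$, the pointwise bound \eqref{4-6} can be integrated against $\zeta^\varepsilon\,d\nu$ directly, producing
\begin{equation*}
E_q(\zeta^\varepsilon) \le \bigl(\phi(\hat{x}) + C\delta(\varepsilon)\bigr)\int_D \zeta^\varepsilon\,d\nu = \phi(\hat{x})\kappa_0\delta(\varepsilon) + C\delta^2(\varepsilon),
\end{equation*}
which finishes the proof. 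I do not expect a serious obstacle here; the only subtle point is tracking that the constants in the $O(\delta(\varepsilon))$ terms arising from $H$ and $R$ are independent of $\varepsilon$ (which follows since $\Lambda$ has already been fixed after Lemma \ref{lem4-1} and the kernel $R$ is bounded uniformly on $D\times D$). The whole argument hinges on the sharpness of Lemma \ref{lem4-1}: without the leading constant $\kappa_0$ in that bound, one could not recover the exact coefficient $\tfrac{\kappa_0}{4\pi} b(\hat{x})$ needed to identify $\phi(\hat{x})$ as the limiting energy density.
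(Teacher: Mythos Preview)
Your proof is correct and follows essentially the same approach as the paper. The paper's argument is terser: it invokes \eqref{4-1} to write $\tfrac{1}{2}\mathcal{K}\zeta^\varepsilon + q \le \tfrac{\kappa_0}{4\pi}b(x) + q(x) + C\delta(\varepsilon) = \phi(x) + C\delta(\varepsilon) \le \phi(\hat{x}) + C\delta(\varepsilon)$ in one line, implicitly absorbing the $H$ and $R$ contributions into the $C\delta(\varepsilon)$ term, and then bounds $E_q(\zeta^\varepsilon) \le \kappa_0\delta(\varepsilon)\,\max_{\bar D}\bigl(\tfrac{1}{2}\mathcal{K}\zeta^\varepsilon + q\bigr)$ exactly as you do; your version simply makes the kernel decomposition and the bounds on $H$ (via Lemma~\ref{lem3-1}) and $R$ explicit.
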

\begin{proof}
	We have $$E_q(\zeta^\varepsilon)\le\int_D\zeta^\varepsilon d\nu \cdot\text{max}_{\bar{D}}(\frac{1}{2}\mathcal{K}\zeta^\varepsilon+q)=\kappa_0\delta(\varepsilon)\text{max}_{\bar{D}}(\frac{1}{2}\mathcal{K}\zeta^\varepsilon+q).$$
	On the other hand, by \eqref{4-1} and the definition of $\hat{x}$ there holds
	$$\frac{1}{2}\mathcal{K}\zeta^\varepsilon+q\le\frac{\kappa_0}{4\pi}b(x)+q(x)+C\delta(\varepsilon)\le\phi(\hat{x})+C\delta(\varepsilon),$$
	where $C>0$ does not depend on $\varepsilon$. Hence we get an upper bound of $E_q(\zeta^\varepsilon)$.
\end{proof}

As Lemma \ref{lem3-2} in Section 3, we choose a proper test function to get a lower bound of $\mathcal{E}(\zeta^\varepsilon)=E_q(\zeta^\varepsilon)-\mathcal{F}_\varepsilon(\zeta^\varepsilon)$.
\begin{lemma}\label{lem4-3}
	As $\varepsilon\to 0^+$, one has
	\begin{equation}\label{4-8}
	\mathcal{E}(\zeta^\varepsilon)=E_q(\zeta^\varepsilon)-\mathcal{F}_\varepsilon(\zeta^\varepsilon)\ge\phi(\hat{x})\kappa_0\delta(\varepsilon)-o_\varepsilon(1)\cdot\delta(\varepsilon)
	\end{equation}
	and
	\begin{equation}\label{4-9}
	E_q(\zeta^\varepsilon)=\phi(\hat{x})\kappa_0\delta(\varepsilon)+V_E(\varepsilon)\cdot\delta(\varepsilon),
	\end{equation}
	where $V_E(\varepsilon)$ is of order $o_\varepsilon(1)$. Moreover, as $\varepsilon\to0^+$ there holds $$-C\text{max}\{f^{-1}(\delta(\varepsilon)),\frac{\ln\ln\frac{1}{\varepsilon}}{\ln\frac{1}{\varepsilon}}\}\le V_E(\varepsilon)\le C\delta(\varepsilon),$$
    where the constant $C>0$ does not depend on $\varepsilon$.	
\end{lemma}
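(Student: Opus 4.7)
The strategy is to combine the upper bound of Lemma~\ref{lem4-2} with a matching lower bound for $\mathcal{E}(\zeta^\varepsilon)$, obtained by testing the maximality inequality $\mathcal{E}(\zeta^\varepsilon)\ge \mathcal{E}(\tilde{\zeta})$ against an explicit family of admissible profiles concentrated at $\hat{x}$. Since $\hat{x}\in D$, fix $r>0$ with $\overline{B_r(\hat{x})}\subset D$ and set $b_0=\inf_{B_r(\hat{x})}b>0$. For a parameter $c\in(0,1)$ to be chosen below, introduce
\[
\tilde{\zeta}_c^\varepsilon(x)=\frac{c\,b_0\,\delta(\varepsilon)}{\varepsilon^2\,b(x)}\,\chi_{_{B_{r_\varepsilon(c)}(\hat{x})}}(x),\qquad r_\varepsilon(c)=\varepsilon\sqrt{\kappa_0/(\pi c b_0)}.
\]
One checks $\int_D\tilde{\zeta}_c^\varepsilon\,d\nu=\kappa_0\delta(\varepsilon)$ and $\tfrac{\varepsilon^2}{\delta(\varepsilon)}\tilde{\zeta}_c^\varepsilon\le c$ pointwise, hence $\tilde{\zeta}_c^\varepsilon\in\mathcal{A}_{\varepsilon,\Lambda}$ as soon as $c<\Lambda$.

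Next I evaluate $\mathcal{E}(\tilde{\zeta}_c^\varepsilon)$ term by term. From the kernel representation of $\mathcal{K}$, the pieces involving the bounded kernels $R(x,y)$ and $H(x,y)$ (both locally bounded on $\overline{B_r(\hat{x})}^{\,2}$) contribute only $O(\delta^2(\varepsilon))$. For the logarithmic contribution, rescaling $x=\hat{x}+r_\varepsilon(c)u$ and exploiting $b\in C^\alpha(\bar{D})$ yields
\[
\frac{1}{4\pi}\iint b(x)\ln\tfrac{1}{|x-y|}\,\tilde{\zeta}_c^\varepsilon(x)\tilde{\zeta}_c^\varepsilon(y)\,d\nu(x)d\nu(y)=\frac{b(\hat{x})\kappa_0^2}{4\pi}\delta^2(\varepsilon)\ln\tfrac{1}{r_\varepsilon(c)}+O(\delta^2(\varepsilon)),
\]
and inserting $\delta(\varepsilon)\ln\tfrac{1}{\varepsilon}=1$ together with $\ln\tfrac{1}{r_\varepsilon(c)}=\ln\tfrac{1}{\varepsilon}+\tfrac12\ln\tfrac{\pi cb_0}{\kappa_0}$ this becomes $\tfrac{b(\hat{x})\kappa_0^2}{4\pi}\delta(\varepsilon)+O(\delta^2(\varepsilon)|\ln c|)$. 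Combined with $\int q\tilde{\zeta}_c^\varepsilon\,d\nu=q(\hat{x})\kappa_0\delta(\varepsilon)+o(\delta(\varepsilon))$ (continuity of $q$ at $\hat{x}$ plus $r_\varepsilon(c)\to 0$), this gives $E_q(\tilde{\zeta}_c^\varepsilon)=\phi(\hat{x})\kappa_0\delta(\varepsilon)+O(\delta^2(\varepsilon)|\ln c|)$. For the convex term, monotonicity of $F_*$ and the elementary bound $F_*(c)\le cf^{-1}(c)$ yield
\[
\mathcal{F}_\varepsilon(\tilde{\zeta}_c^\varepsilon)\le \frac{\delta(\varepsilon)}{\varepsilon^2}F_*(c)\,\|b\|_{\infty}\,\pi r_\varepsilon(c)^2\le C\delta(\varepsilon)\,\frac{F_*(c)}{c}\le C\delta(\varepsilon)f^{-1}(c).
\]

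By maximality of $\zeta^\varepsilon$,
\[
\mathcal{E}(\zeta^\varepsilon)\ge \mathcal{E}(\tilde{\zeta}_c^\varepsilon)\ge \phi(\hat{x})\kappa_0\delta(\varepsilon)-C\delta^2(\varepsilon)|\ln c|-C\delta(\varepsilon)f^{-1}(c).
\]
Specializing to $c=\delta(\varepsilon)$ converts $\delta^2(\varepsilon)|\ln c|$ into $\delta(\varepsilon)\cdot\ln\ln(1/\varepsilon)/\ln(1/\varepsilon)$ and leaves $\delta(\varepsilon)f^{-1}(\delta(\varepsilon))$, which establishes \eqref{4-8}. Since $\mathcal{F}_\varepsilon(\zeta^\varepsilon)\ge 0$, one has $E_q(\zeta^\varepsilon)\ge \mathcal{E}(\zeta^\varepsilon)$, so the same lower bound passes to $V_E(\varepsilon)$; the matching upper bound $V_E(\varepsilon)\le C\delta(\varepsilon)$ is exactly Lemma~\ref{lem4-2}. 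Both bounds are $o_\varepsilon(1)$ (the lower because (H1) forces $f^{-1}(\delta(\varepsilon))\to 0$, and $\ln\ln(1/\varepsilon)/\ln(1/\varepsilon)\to 0$ trivially), whence $V_E(\varepsilon)=o_\varepsilon(1)$. The main delicacy is the tuning of $c$: letting $c\to 0$ kills $\mathcal{F}_\varepsilon(\tilde{\zeta}_c^\varepsilon)$ but inflates the logarithmic defect $\delta^2(\varepsilon)|\ln c|$ from spreading the patch, while keeping $c$ bounded away from $0$ leaves $F_*(c)/c$ of order one so that $\mathcal{F}_\varepsilon$ absorbs the full factor $\delta(\varepsilon)$. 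The compromise $c=\delta(\varepsilon)$ dovetails these antagonistic errors into the stated $\max$-rate.
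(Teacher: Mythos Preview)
Your argument is correct and coincides with the paper's proof: once you specialize your parameter to $c=\delta(\varepsilon)$, your test function $\tilde{\zeta}_c^\varepsilon$ becomes exactly the paper's $\check{\zeta}^\varepsilon=\frac{\delta^2(\varepsilon)b_0}{\varepsilon^2b(x)}\chi_{B_{\varepsilon\sqrt{\kappa_0/\pi b_0\delta(\varepsilon)}}(\hat{x})}$, and the three-term estimate (logarithmic kernel, $q$-term, $\mathcal{F}_\varepsilon$-term) is carried out identically. The only cosmetic difference is that you keep $c$ free until the end and add a heuristic discussion of why $c=\delta(\varepsilon)$ balances the two errors; one small tightening is that for the sharp lower bound on $V_E(\varepsilon)$ you should invoke $q\in C^1(\bar{D})$ rather than mere continuity, so that the $q$-error $O(r_\varepsilon(c)\delta(\varepsilon))$ is genuinely absorbed by the $\delta^2(\varepsilon)|\ln c|$ term.
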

\begin{proof}
	We choose some $r$ satisfying $0<r<dist(\hat{x},\partial D)$. Let $b_0=\text{inf}_{B_r(\hat{x})}b$ and  $$\check{\zeta}^\varepsilon=\frac{\delta^2(\varepsilon)b_0}{\varepsilon^2b(x)} \chi_{_{B_{\varepsilon \sqrt{\kappa_0/\pi b_0\delta(\varepsilon)}}(\hat{x})}}.$$
	We can check easily $\check{\zeta}^\varepsilon \in\mathcal{A}_\varepsilon$ for all sufficiently small $\varepsilon$. Since $\zeta^\varepsilon$ is a maximizer, we have $\mathcal{E}(\zeta^\varepsilon)\ge \mathcal{E}(\check{\zeta}^\varepsilon)$. A simple calculation yields that
	\begin{equation*}
	\begin{split}
	\frac{1}{2}\int_D\check\zeta^\varepsilon\mathcal{K}\check\zeta^\varepsilon d\nu&\ge\frac{1}{4\pi}(b(\hat{x})+(\frac{C\varepsilon}{\sqrt{\delta(\varepsilon)}})||b||_{C^1(D)})\kappa_0^2\delta^2(\varepsilon)(\ln\frac{1}{\varepsilon}-\frac{1}{2}\ln\ln\frac{1}{\varepsilon})\\
	&=\frac{b(\hat{x})}{4\pi}\kappa_0^2\delta(\varepsilon)-\frac{C\ln\ln\frac{1}{\varepsilon}}{\ln\frac{1}{\varepsilon}}\cdot\delta(\varepsilon)+C\varepsilon\cdot\sqrt{\delta(\varepsilon)},\\
	\int_D\check\zeta^\varepsilon qd\nu&=(q(\hat{x})+(\frac{C\varepsilon}{\sqrt{\delta(\varepsilon)}})||q||_{C^1(\bar{D})})\kappa_0\delta(\varepsilon)\\
	&=q(\hat{x})\kappa_0\delta(\varepsilon)+C\varepsilon\cdot\sqrt{\delta(\varepsilon)},
	\end{split}
	\end{equation*}
	where $C>0$ does not depend on $\varepsilon$. Since $F_*(t)\le tf^{-1}(t)$, we have 
	$$-\mathcal{F}_\varepsilon(\check\zeta^\varepsilon)\ge-\int_D\check\zeta^\varepsilon f^{-1}(\delta(\varepsilon))d\nu\ge-f^{-1}(\delta(\varepsilon))\kappa_0\delta(\varepsilon)=-Cf^{-1}(\delta(\varepsilon))\cdot\delta(\varepsilon).$$
	We obtain \eqref{4-8} by adding all these three terms up. Now we can combine \eqref{4-7} and \eqref{4-8} to obtain
	$$\phi(\hat{x})\kappa_0\delta(\varepsilon)-Cf^{-1}(\delta(\varepsilon))\cdot\delta(\varepsilon)-\frac{C\ln\ln\frac{1}{\varepsilon}}{\ln\frac{1}{\varepsilon}}\cdot\delta(\varepsilon)+C\varepsilon(\ln\frac{1}{\varepsilon})^\frac{1}{2}\cdot\delta(\varepsilon)\le E_q(\zeta^\varepsilon)\le\phi(\hat{x})\kappa_0\delta(\varepsilon)+C\delta^2(\varepsilon).$$
	This gives \eqref{4-9}. Thus the proof is complete.
\end{proof}

Now we are going to estimate $\mathcal{F}_\varepsilon(\zeta^\varepsilon)$. Thanks to assumption $(\text{H2}')$, the energy contributed by the vortex core can be controlled by $\mathcal{F}_\varepsilon(\zeta^\varepsilon)$.
\begin{lemma}\label{lem4-4}
	As $\varepsilon\to 0^+$, one has 
	\begin{equation}\label{4-10}
	\mathcal{F}_\varepsilon(\zeta^\varepsilon)\le o_\varepsilon(1)\cdot\delta(\varepsilon)
	\end{equation}
	and
	\begin{equation}\label{4-11}
	\int_D\zeta^\varepsilon\psi^\varepsilon d\nu=\int_D\zeta^\varepsilon(\mathcal{K}\zeta^\varepsilon+q-\mu^\varepsilon)d\nu=\int_D \zeta^\varepsilon f^{-1}(\frac{\varepsilon^2}{\delta(\varepsilon)}\zeta^\varepsilon)d\nu(x)\le o_\varepsilon(1)\cdot\delta(\varepsilon).
	\end{equation}
\end{lemma}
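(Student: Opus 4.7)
The plan is to derive the first inequality by subtracting the two previous lemmas, then use hypothesis $(\text{H2}')$ and the Euler--Lagrange characterisation of $\zeta^\varepsilon$ to obtain the second chain.

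For the first bound, I would simply write $\mathcal{F}_\varepsilon(\zeta^\varepsilon) = E_q(\zeta^\varepsilon) - \mathcal{E}(\zeta^\varepsilon)$. Combining the upper bound $E_q(\zeta^\varepsilon)\le \phi(\hat{x})\kappa_0\delta(\varepsilon)+C\delta^2(\varepsilon)$ from Lemma \ref{lem4-2} with the lower bound $\mathcal{E}(\zeta^\varepsilon)\ge \phi(\hat{x})\kappa_0\delta(\varepsilon)-o_\varepsilon(1)\cdot\delta(\varepsilon)$ from Lemma \ref{lem4-3}, the leading terms $\phi(\hat{x})\kappa_0\delta(\varepsilon)$ cancel, and since $\delta(\varepsilon)=(\ln\tfrac1\varepsilon)^{-1}=o_\varepsilon(1)$, the remainder $C\delta^2(\varepsilon)+o_\varepsilon(1)\cdot\delta(\varepsilon)$ is itself $o_\varepsilon(1)\cdot\delta(\varepsilon)$, which is \eqref{4-10}.

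For the second assertion, I would first convert $\mathcal{F}_\varepsilon$ to an integral involving $f^{-1}$ via the equivalent hypothesis $(\text{H2}')$: since $F_*(s)\ge\vartheta_1 s f^{-1}(s)$ for all $s\ge 0$, substituting $s=\tfrac{\varepsilon^2}{\delta(\varepsilon)}\zeta^\varepsilon(x)$ and integrating against $d\nu$ yields
\[
\mathcal{F}_\varepsilon(\zeta^\varepsilon)=\frac{\delta(\varepsilon)}{\varepsilon^2}\int_D F_*\!\left(\tfrac{\varepsilon^2}{\delta(\varepsilon)}\zeta^\varepsilon\right)d\nu\;\ge\;\vartheta_1\int_D \zeta^\varepsilon f^{-1}\!\left(\tfrac{\varepsilon^2}{\delta(\varepsilon)}\zeta^\varepsilon\right)d\nu.
\]
Combined with \eqref{4-10} and $\vartheta_1>0$, this immediately gives the inequality $\int_D \zeta^\varepsilon f^{-1}(\tfrac{\varepsilon^2}{\delta(\varepsilon)}\zeta^\varepsilon)d\nu\le o_\varepsilon(1)\cdot\delta(\varepsilon)$.

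It remains to identify the two integrals with $\int_D\zeta^\varepsilon\psi^\varepsilon d\nu$. The middle equality is simply the definition $\psi^\varepsilon=\mathcal{K}\zeta^\varepsilon+q-\mu^\varepsilon$ from Lemma \ref{lem2-2}. For the outer equality, I would invoke the profile \eqref{4-3} obtained after eliminating the patch part: on $\{\zeta^\varepsilon>0\}$ we have $\psi^\varepsilon>0$ and $\tfrac{\varepsilon^2}{\delta(\varepsilon)}\zeta^\varepsilon=f(\psi^\varepsilon)$, so that (by strict monotonicity of $f$ on $[0,\infty)$, part of (H1)) $\psi^\varepsilon=f^{-1}(\tfrac{\varepsilon^2}{\delta(\varepsilon)}\zeta^\varepsilon)$ a.e.\ on the support of $\zeta^\varepsilon$; on the complement the integrand on either side vanishes because $\zeta^\varepsilon=0$. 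Hence the two $d\nu$--integrals are equal, finishing \eqref{4-11}.

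The main obstacle is conceptual rather than technical: one must have both the sharp upper bound in Lemma \ref{lem4-2} (exploiting the definition of $\hat{x}$ and the kernel estimate \eqref{4-1}) and the sharp lower bound in Lemma \ref{lem4-3} (obtained from a carefully scaled test function of radius $\varepsilon\sqrt{\kappa_0/(\pi b_0\delta(\varepsilon))}$) so that the two $\phi(\hat{x})\kappa_0\delta(\varepsilon)$ terms cancel exactly, leaving only $o_\varepsilon(1)\cdot\delta(\varepsilon)$. Once both bounds are in hand, the proof is a short assembly step using $(\text{H2}')$ and the a.e.\ identity $\psi^\varepsilon = f^{-1}(\tfrac{\varepsilon^2}{\delta(\varepsilon)}\zeta^\varepsilon)$ on $\mathrm{supp}(\zeta^\varepsilon)$.
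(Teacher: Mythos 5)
Your proof is correct and follows exactly the same route as the paper: you obtain \eqref{4-10} by pairing the upper bound on $E_q(\zeta^\varepsilon)$ from Lemma~\ref{lem4-2} with the lower bound on $\mathcal{E}(\zeta^\varepsilon)$ from Lemma~\ref{lem4-3}, and you obtain \eqref{4-11} by combining $(\text{H2}')$ with the fact that, thanks to the profile \eqref{4-3} with no patch part, $\psi^\varepsilon=f^{-1}\bigl(\tfrac{\varepsilon^2}{\delta(\varepsilon)}\zeta^\varepsilon\bigr)$ a.e.\ on $\mathrm{supp}(\zeta^\varepsilon)$. The only difference is that you spell out the identification of the integrals in slightly more detail than the paper, which simply says ``the patch part of $\zeta^\varepsilon$ vanishes.''
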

\begin{proof}
	By \eqref{4-7} and \eqref{4-8}, one obtains
	$$\mathcal{F}_\varepsilon(\zeta^\varepsilon)\le o_\varepsilon(1)\cdot\delta(\varepsilon)+o_\varepsilon(1)\cdot\delta(\varepsilon)=o_\varepsilon(1)\cdot\delta(\varepsilon).$$
	By assumption $(\text{H2}')$, there holds
	$$\exists\vartheta_1\in(0,1) \ \ \ s.t.\ \ \ F_*(s)\ge\vartheta_1sf^{-1}(s), \ \ \ \forall s>0$$
	Hence one can deduce that
	$$\int_D \zeta^\varepsilon f^{-1}(\frac{\varepsilon^2}{\delta(\varepsilon)}\zeta^\varepsilon)d\nu\le \frac{\delta(\varepsilon)}{\vartheta_1\varepsilon^2}\int_D F_*(\frac{\varepsilon^2}{\delta(\varepsilon)}\zeta^\varepsilon)d\nu=\frac{1}{\vartheta_1}\mathcal{F}_\varepsilon(\zeta^\varepsilon)\le o_\varepsilon(1)\cdot\delta(\varepsilon).$$
	Recalling \eqref{4-3}, the patch part of $\zeta^\varepsilon$ vanishes, which implies
	$$\int_D\zeta^\varepsilon\psi^\varepsilon d\nu=\int_D\zeta^\varepsilon(\mathcal{K}\zeta^\varepsilon+q-\mu^\varepsilon)d\nu=\int_D \zeta^\varepsilon f^{-1}(\frac{\varepsilon^2}{\delta(\varepsilon)}\zeta^\varepsilon)d\nu.$$
	Thus we get \eqref{4-11} and complete the proof. 
\end{proof}

The following lemma indicates $\zeta^\varepsilon$ will concentrat near $\hat{x}$ in some extent. Before giving the lemma, we define a neighborhood of $\hat{x}$ to be
$$\mathcal{P}_\varepsilon:=\{x\in D\ |\ \phi(\hat{x})-\phi(x)<V_P(\varepsilon)\},$$
where $V_P(\varepsilon)$ of order $o_\varepsilon(1)$ is given by
$$
V_P(\varepsilon)=\left\{
\begin{array}{lll}
\left(f^{-1}(\delta(\varepsilon))\right)^{\frac{1}{2}}                                                               &    \text{if} & f^{-1}(\delta(\varepsilon))>\frac{\ln\ln\frac{1}{\varepsilon}}{\ln\frac{1}{\varepsilon}}, \\
\left(\frac{\ln\ln\frac{1}{\varepsilon}}{\ln\frac{1}{\varepsilon}}\right)^{\frac{1}{2}}                  &    \text{if} & f^{-1}(\delta(\varepsilon))\le\frac{\ln\ln\frac{1}{\varepsilon}}{\ln\frac{1}{\varepsilon}}.
\end{array}
\right.
$$
Since $\phi(x)\in C(\bar{D})\cap C^1(D)$ and $\hat{x}\in D$ is the only maximum point of $\phi(x)$, one has $diam(\mathcal{P}_\varepsilon)=o_\varepsilon(1)$. Thus we can write $\zeta^\varepsilon$ as 
$$\zeta^\varepsilon=\zeta^\varepsilon_{p_1}+\zeta^\varepsilon_{p_2},$$ where
$\zeta^\varepsilon_{p_1}=\zeta^\varepsilon{\chi}_{\mathcal{P}_\varepsilon}$ and $\zeta^\varepsilon_{p_2}=\zeta^\varepsilon-\zeta^\varepsilon_{p_1}$. We denote $\kappa^\varepsilon_{p_1}=\frac{1}{\delta(\varepsilon)}\int_D\zeta^\varepsilon_{p_1}d\nu$ and $\kappa^\varepsilon_{p_2}=\frac{1}{\delta(\varepsilon)}\int_D\zeta^\varepsilon_{p_2}d\nu$. 
\begin{lemma}\label{lem4-5}
As $\varepsilon\to 0^+$, one has
\begin{equation}\label{4-12}
\kappa^\varepsilon_{p_1}/\kappa_0=1-o_\varepsilon(1) \ \ \ and\ \ \ \kappa^\varepsilon_{p_2}/\kappa_0=o_\varepsilon(1).
\end{equation}		
\end{lemma}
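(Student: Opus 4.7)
The plan is to pit a pointwise upper bound on $\tfrac12\mathcal{K}\zeta^\varepsilon+q$ against the sharp lower bound on $E_q(\zeta^\varepsilon)$ supplied by Lemma~\ref{lem4-3}. The gain on $\mathcal{P}_\varepsilon^c$, where $\phi$ is pointwise smaller than $\phi(\hat x)$ by at least $V_P(\varepsilon)$, will force the $\nu$-mass of $\zeta^\varepsilon$ there to vanish faster than $V_P(\varepsilon)^{-1}$.

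First, combining Lemma~\ref{lem4-1} with the integral kernel representation, for every $x\in D$ one has
\begin{equation*}
\tfrac12\mathcal{K}\zeta^\varepsilon(x)+q(x)\le\tfrac{\kappa_0}{4\pi}b(x)+q(x)+C\delta(\varepsilon)=\phi(x)+C\delta(\varepsilon),
\end{equation*}
for some $C>0$ independent of $\varepsilon$. Writing
\begin{equation*}
E_q(\zeta^\varepsilon)=\int_D\Bigl(\tfrac12\mathcal{K}\zeta^\varepsilon+q\Bigr)\zeta^\varepsilon\,d\nu\le\int_D\phi\,\zeta^\varepsilon\,d\nu+C\kappa_0\delta^2(\varepsilon),
\end{equation*}
and exploiting that $\phi(x)\le\phi(\hat x)$ on $\mathcal{P}_\varepsilon$ while $\phi(x)\le\phi(\hat x)-V_P(\varepsilon)$ on $\mathcal{P}_\varepsilon^c$, one obtains
\begin{equation*}
E_q(\zeta^\varepsilon)\le\phi(\hat x)\kappa_0\delta(\varepsilon)-V_P(\varepsilon)\,\kappa^\varepsilon_{p_2}\,\delta(\varepsilon)+C\delta^2(\varepsilon).
\end{equation*}

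Second, the lower estimate \eqref{4-9} of Lemma~\ref{lem4-3} gives $E_q(\zeta^\varepsilon)\ge\phi(\hat x)\kappa_0\delta(\varepsilon)+V_E(\varepsilon)\delta(\varepsilon)$, so subtracting and dividing by $\delta(\varepsilon)$ one finds
\begin{equation*}
V_P(\varepsilon)\,\kappa^\varepsilon_{p_2}\le -V_E(\varepsilon)+C\delta(\varepsilon)\le C\max\Bigl\{f^{-1}(\delta(\varepsilon)),\,\tfrac{\ln\ln\frac{1}{\varepsilon}}{\ln\frac{1}{\varepsilon}}\Bigr\}+C\delta(\varepsilon)=CV_P^{2}(\varepsilon)+C\delta(\varepsilon).
\end{equation*}
Hence $\kappa^\varepsilon_{p_2}\le CV_P(\varepsilon)+C\delta(\varepsilon)/V_P(\varepsilon)$.

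Third, to close the argument one must verify that $V_P(\varepsilon)\to 0$ (already clear by construction) and that $\delta(\varepsilon)/V_P(\varepsilon)\to 0$. Since $\delta(\varepsilon)=1/\ln\frac{1}{\varepsilon}$, in both cases defining $V_P$ one has $V_P(\varepsilon)\ge\bigl(\ln\ln\frac{1}{\varepsilon}/\ln\frac{1}{\varepsilon}\bigr)^{1/2}$, so $\delta(\varepsilon)/V_P(\varepsilon)\le(\ln\frac{1}{\varepsilon}\ln\ln\frac{1}{\varepsilon})^{-1/2}=o_\varepsilon(1)$. Therefore $\kappa^\varepsilon_{p_2}/\kappa_0=o_\varepsilon(1)$, and since $\kappa^\varepsilon_{p_1}+\kappa^\varepsilon_{p_2}=\kappa_0$, we also get $\kappa^\varepsilon_{p_1}/\kappa_0=1-o_\varepsilon(1)$.

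The only mildly delicate point is the correct calibration of $V_P(\varepsilon)$: it is chosen precisely so that $V_P^2(\varepsilon)$ matches the worst of the two terms controlling $|V_E(\varepsilon)|$, guaranteeing both $V_P(\varepsilon)=o_\varepsilon(1)$ and $\delta(\varepsilon)/V_P(\varepsilon)=o_\varepsilon(1)$. Once this balance is verified case by case, the rest is a direct combination of the energy inequalities of Lemmas~\ref{lem4-1}--\ref{lem4-3}.
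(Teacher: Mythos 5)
Your proof is correct and takes essentially the same approach as the paper: you combine the pointwise bound $\tfrac12\mathcal{K}\zeta^\varepsilon+q\le\phi(x)+C\delta(\varepsilon)$ (established in the course of Lemma~\ref{lem4-2}) with the gap $\phi(\hat x)-\phi\ge V_P(\varepsilon)$ on $\mathcal{P}_\varepsilon^c$ and the lower bound \eqref{4-9}, exactly as the paper does. The only difference is that the paper phrases this as a contradiction argument (assuming $\kappa^\varepsilon_{p_2}\ge C_P\kappa_0$ along a subsequence), whereas you run it directly and in doing so cleanly isolate the quantitative rate $\kappa^\varepsilon_{p_2}\le C V_P(\varepsilon)+C\delta(\varepsilon)/V_P(\varepsilon)$ and verify $\delta(\varepsilon)/V_P(\varepsilon)=o_\varepsilon(1)$, making precise the paper's remark that "$V_P(\varepsilon)\cdot C_P\kappa_0\delta(\varepsilon)$ dominates the other terms."
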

\begin{proof}
	The proof is by contradiction. Suppose there exists a constant $C_P\in(0,1)$, such that $\kappa^\varepsilon_{p_2}\ge C_P\kappa_0$ for a subsequence of $\varepsilon\to0^+$. Using \eqref{4-6}, there holds
	\begin{equation*}
	\begin{split}
	E_q(\zeta^\varepsilon)&=\int_D(\frac{1}{2}\mathcal{K}\zeta^\varepsilon+q)\zeta^\varepsilon_{p_1} d\nu+\int_D(\frac{1}{2}\mathcal{K}\zeta^\varepsilon+q)\zeta^\varepsilon_{p_2} d\nu\\
	&\le(\phi(\hat{x})+C\delta(\varepsilon))\kappa^\varepsilon_{p_1}\delta(\varepsilon)+\text{sup}_{D/\mathcal{P}_\varepsilon}\phi(x)\cdot\kappa^\varepsilon_{p_2}\delta(\varepsilon)\\
	&=\phi(\hat{x})\kappa^\varepsilon_{p_1}\delta(\varepsilon)+(\phi(\hat{x})-V_P(\varepsilon))\kappa^\varepsilon_{p_2}\delta(\varepsilon)+C\delta^2(\varepsilon)\\
	&\le\phi(\hat{x})\kappa_0\delta(\varepsilon)-V_P(\varepsilon)\cdot C_P\kappa_0\delta(\varepsilon)+C\delta^2(\varepsilon).
	\end{split}
	\end{equation*}
	Recalling the definition of $V_P(\varepsilon)$ and $V_E(\varepsilon)$, as $\varepsilon$ sufficiently small we have 
	$$E_q(\zeta^\varepsilon)<\phi(\hat{x})\kappa_0\delta(\varepsilon)+V_E(\varepsilon)\cdot\delta(\varepsilon),$$
	which contradicts \eqref{4-9} since $V_P(\varepsilon)\cdot C_P\kappa_0\delta(\varepsilon)$ dominates the other terms. Hence we obtain \eqref{4-12}. (With more careful analysis, we can prove $\kappa^\varepsilon_{p_2}/\kappa_0\le C(V_P(\varepsilon))^{\frac{1}{2}}$. We omit the proof here.)
\end{proof}

Due to Lemma \ref{lem4-5}, when $\varepsilon$ is sufficiently small, $E_q(\zeta^\varepsilon)$ can be separated into two parts. From this observation, we give an estimate of $\mu^\varepsilon$.
\begin{lemma}\label{lem4-6}
	As $\varepsilon\to 0^+$, one has
	\begin{equation}\label{4-13}
	\int_Dq\zeta^\varepsilon d\nu=q(\hat{x})\cdot\kappa_0\delta(\varepsilon)+o_\varepsilon(1)\cdot\delta(\varepsilon)
	\end{equation}
	and
	\begin{equation}\label{4-14}
	\int_D\zeta^\varepsilon\mathcal{K}\zeta^\varepsilon d\nu=\frac{\kappa_0}{2\pi}b(\hat{x})\cdot\kappa_0\delta(\varepsilon)+o_\varepsilon(1)\cdot\delta(\varepsilon).
	\end{equation}
	Hence there holds
	\begin{equation}\label{4-15}
	\mu^\varepsilon=\frac{\kappa_0}{2\pi}b(\hat{x})+q(\hat{x})+o_\varepsilon(1)
	\end{equation}
\end{lemma}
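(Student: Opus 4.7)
The plan is to extract each identity from the decomposition estimate of Lemma \ref{lem4-5} combined with the sharp energy expansion \eqref{4-9} and the Lagrange identity \eqref{4-11}.

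First I would establish \eqref{4-13} by exploiting the concentration provided by Lemma \ref{lem4-5}. Writing $\zeta^\varepsilon=\zeta^\varepsilon_{p_1}+\zeta^\varepsilon_{p_2}$, one has
\[
\int_D q\zeta^\varepsilon d\nu=\int_{\mathcal{P}_\varepsilon}q\zeta^\varepsilon_{p_1}d\nu+\int_{D\setminus\mathcal{P}_\varepsilon}q\zeta^\varepsilon_{p_2}d\nu.
\]
Since $q\in C^1(\bar D)$ and $\text{diam}(\mathcal{P}_\varepsilon)=o_\varepsilon(1)$, we have $\sup_{\mathcal{P}_\varepsilon}|q-q(\hat{x})|=o_\varepsilon(1)$, so the first term is $q(\hat{x})\kappa^\varepsilon_{p_1}\delta(\varepsilon)+o_\varepsilon(1)\cdot\delta(\varepsilon)$. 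For the second, $q$ is bounded on $\bar D$ and $\kappa^\varepsilon_{p_2}=o_\varepsilon(1)$ by Lemma \ref{lem4-5}, so that integral is $o_\varepsilon(1)\cdot\delta(\varepsilon)$. Combining and using $\kappa^\varepsilon_{p_1}=\kappa_0(1-o_\varepsilon(1))$ yields \eqref{4-13}.

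Next I would derive \eqref{4-14} by subtraction. From the definition $E_q(\zeta^\varepsilon)=\tfrac12\int_D\zeta^\varepsilon\mathcal{K}\zeta^\varepsilon d\nu+\int_Dq\zeta^\varepsilon d\nu$, together with the two-sided bound \eqref{4-9} and the just proved \eqref{4-13}, we get
\[
\tfrac12\int_D\zeta^\varepsilon\mathcal{K}\zeta^\varepsilon d\nu
=E_q(\zeta^\varepsilon)-\int_D q\zeta^\varepsilon d\nu
=\big(\phi(\hat{x})-q(\hat{x})\big)\kappa_0\delta(\varepsilon)+o_\varepsilon(1)\cdot\delta(\varepsilon).
\]
Since $\phi(\hat{x})-q(\hat{x})=\tfrac{\kappa_0}{4\pi}b(\hat{x})$, multiplying by $2$ gives \eqref{4-14}.

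Finally, to obtain \eqref{4-15}, I would invoke the identity \eqref{4-11} from Lemma \ref{lem4-4}:
\[
\int_D\zeta^\varepsilon\mathcal{K}\zeta^\varepsilon d\nu+\int_Dq\zeta^\varepsilon d\nu-\mu^\varepsilon\int_D\zeta^\varepsilon d\nu=\int_D\zeta^\varepsilon f^{-1}\!\bigl(\tfrac{\varepsilon^2}{\delta(\varepsilon)}\zeta^\varepsilon\bigr)d\nu=o_\varepsilon(1)\cdot\delta(\varepsilon).
\]
Substituting \eqref{4-13}, \eqref{4-14} and the constraint $\int_D\zeta^\varepsilon d\nu=\kappa_0\delta(\varepsilon)$, then dividing through by $\kappa_0\delta(\varepsilon)$, yields $\mu^\varepsilon=\tfrac{\kappa_0}{2\pi}b(\hat{x})+q(\hat{x})+o_\varepsilon(1)$.

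The bulk of the work has already been absorbed into Lemmas \ref{lem4-3}--\ref{lem4-5}; this step is really bookkeeping. The only point requiring a little care is that the $o_\varepsilon(1)$ errors produced by continuity of $q$ on $\mathcal{P}_\varepsilon$ and by the mass leakage $\kappa^\varepsilon_{p_2}$ must be controlled uniformly in the same $\varepsilon\to 0^+$ regime in which $V_E(\varepsilon)$ and the $F_*$-bound in \eqref{4-11} are small; since Lemma \ref{lem4-5} is precisely tailored so that $V_P(\varepsilon)$ dominates $V_E(\varepsilon)$, these errors are compatible and no further refinement is needed.
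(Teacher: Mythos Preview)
Your proposal is correct and follows essentially the same route as the paper: split $\zeta^\varepsilon=\zeta^\varepsilon_{p_1}+\zeta^\varepsilon_{p_2}$ and use continuity of $q$ together with Lemma~\ref{lem4-5} to get \eqref{4-13}, subtract from \eqref{4-9} to get \eqref{4-14}, and then feed both into the Lagrange identity \eqref{4-11} to obtain \eqref{4-15}. The only implicit point worth making explicit is that $\int_D\zeta^\varepsilon\psi^\varepsilon d\nu\ge 0$ (since $\psi^\varepsilon>0$ on $\mathrm{supp}\,\zeta^\varepsilon$), so the one-sided bound in \eqref{4-11} indeed pins down this integral as $o_\varepsilon(1)\cdot\delta(\varepsilon)$.
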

\begin{proof}
	To prove \eqref{4-13}, we use the regularity of $b(x)$ and $q(x)$. As $\varepsilon\to 0^+$, for all $x\in \mathcal{P}_\varepsilon$ we have
	$$b(x)=b(\hat{x})+o_\varepsilon(1)\ \ \ and \ \ \ q(x)=q(\hat{x})+o_\varepsilon(1).$$   
	One obtains
	\begin{equation*}
	\begin{split}
	\int_Dq\zeta^\varepsilon d\nu&=\int_Dq\zeta^\varepsilon_{p_1} d\nu+\int_Dq\zeta^\varepsilon_{p_2} d\nu\\
	&=(q(\hat{x})+o_\varepsilon(1))(\kappa_0-o_\varepsilon(1)\cdot\kappa_0)\delta(\varepsilon)+o_\varepsilon(1)\cdot\kappa_0\delta(\varepsilon)\\
	&=q(\hat{x})\cdot\kappa_0\delta(\varepsilon)+o_\varepsilon(1)\cdot\delta(\varepsilon).
	\end{split}
	\end{equation*}
	Combining \eqref{4-9} and \eqref{4-13} we have \eqref{4-14}. Using \eqref{4-11}, there holds
	$$\int_D\zeta^\varepsilon\psi^\varepsilon d\nu=\int_D\zeta^\varepsilon(\mathcal{K}\zeta^\varepsilon+q-\mu^\varepsilon)d\nu\le o_\varepsilon(1)\cdot\delta(\varepsilon).$$
	Thus as $\varepsilon$ goes to zero, we have
	$$\mu^\varepsilon\cdot\kappa_0\delta(\varepsilon)=\int_D\zeta^\varepsilon\mathcal{K}\zeta^\varepsilon d\nu+\int_Dq\zeta^\varepsilon d\nu+o_\varepsilon(1)\cdot\delta(\varepsilon).$$
	According to \eqref{4-13} and \eqref{4-14}, \eqref{4-15} follows and we complete the proof.	
\end{proof}
 
For every $l_0$ satisfies $0<l_0<dist(\hat{x},\partial D)$, we define $\zeta^\varepsilon_1:=\zeta^\varepsilon\chi_{B_{l_0}(\hat{x})}$ and $\zeta^\varepsilon_2:=\zeta^\varepsilon-\zeta^\varepsilon_1$. Correspondingly, we denote $\kappa^\varepsilon_1=\frac{1}{\delta(\varepsilon)}\int_D\zeta^\varepsilon_1d\nu$ and $\kappa^\varepsilon_2=\frac{1}{\delta(\varepsilon)}\int_D\zeta^\varepsilon_2d\nu$. By Lemma \ref{lem4-5}, as $\varepsilon\to0^+$, we have 
$$\kappa^\varepsilon_1/\kappa_0=1-o_\varepsilon(1) \ \ \ and\ \ \ \kappa^\varepsilon_2/\kappa_0=o_\varepsilon(1).$$
In the next lemma, we argue that for each $x\in supp(\zeta^\varepsilon_1)$, $\mathcal{K}\zeta^\varepsilon(x)+q(x)$ has a uniform upper bound.
\begin{lemma}\label{lem4-7}
 	As $\varepsilon\to 0^+$, for each $x\in supp(\zeta^\varepsilon_1)$ one has
 	\begin{equation}\label{4-16}
 	\mathcal{K}\zeta^\varepsilon(x)+q(x)\le\frac{b(\hat{x})}{2\pi}\int_D\ln\frac{1}{|x-y|}\zeta^\varepsilon(y)d\nu(y)+q(\hat{x})+C_Ll_0+o_\varepsilon(1),
 	\end{equation}
 	where $C_L>0$ does not depend on $\varepsilon$.
\end{lemma}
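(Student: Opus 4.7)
The plan is to exploit the integral representation of $\mathcal{K}$ from Definition \ref{def2}, extract the leading logarithmic term, and then freeze the slowly varying coefficients $b$ and $q$ at $\hat{x}$ using the localization $x\in B_{l_0}(\hat{x})$. Using $G(x,y)=\frac{1}{2\pi}\ln\frac{diam(D)}{|x-y|}-H(x,y)$, the representation of $\mathcal{K}\zeta^\varepsilon$ becomes
\begin{align*}
\mathcal{K}\zeta^\varepsilon(x) = \frac{b(x)}{2\pi}\int_D \ln\frac{1}{|x-y|}\zeta^\varepsilon d\nu + \frac{b(x)\ln diam(D)}{2\pi}\kappa_0\delta(\varepsilon) - b(x)\int_D H(x,y)\zeta^\varepsilon d\nu + \int_D R(x,y)\zeta^\varepsilon d\nu.
\end{align*}
The second term is $O(\delta(\varepsilon))=o_\varepsilon(1)$, and the fourth is bounded by $\|R\|_{L^\infty(D\times D)}\kappa_0\delta(\varepsilon)=o_\varepsilon(1)$. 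The third term is non-positive because $H\ge 0$ on $D\times D$: writing the Dirichlet Green's function as $G(x,y)=\frac{1}{2\pi}\ln\frac{1}{|x-y|}+h(x,y)$, the regular part $h(x,\cdot)$ is harmonic on $D$ with boundary values $\frac{1}{2\pi}\ln|x-y|\le\frac{1}{2\pi}\ln diam(D)$, so the maximum principle gives $h(x,y)\le\frac{1}{2\pi}\ln diam(D)$ in $D$, whence $H(x,y)=\frac{1}{2\pi}\ln diam(D)-h(x,y)\ge 0$.

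Next I would establish the $\varepsilon$-uniform bound $\bigl|\int_D \ln\frac{1}{|x-y|}\zeta^\varepsilon d\nu\bigr|\le C$: the upper bound follows from Lemma \ref{lem4-1} after subtracting the $\ln diam(D)\cdot\kappa_0\delta(\varepsilon)$ contribution, and the lower bound is immediate from $|x-y|\le diam(D)$. For $x\in supp(\zeta^\varepsilon_1)\subset B_{l_0}(\hat{x})$, the Lipschitz continuity of $b$ and $q$ on a compact neighborhood of $\hat{x}\in D$ yields $|b(x)-b(\hat{x})|\le L_b l_0$ and $|q(x)-q(\hat{x})|\le L_q l_0$. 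Combining these estimates,
\begin{align*}
\frac{b(x)}{2\pi}\int_D\ln\frac{1}{|x-y|}\zeta^\varepsilon d\nu + q(x) \le \frac{b(\hat{x})}{2\pi}\int_D\ln\frac{1}{|x-y|}\zeta^\varepsilon d\nu + q(\hat{x}) + C_L l_0,
\end{align*}
with $C_L=L_bC/(2\pi)+L_q$. Together with the controls on the other three terms in the decomposition above, this yields \eqref{4-16}.

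The main technical step is controlling the sign of the $H$ integral; the short maximum-principle argument above settles it, and everything else reduces to routine quantitative bookkeeping. In particular, one does not need $y\in B_{l_0}(\hat{x})$: the argument only requires $x$ to be close to $\hat{x}$, so that $b(x)$ and $q(x)$ can be frozen at $\hat{x}$ with error $O(l_0)$, while $y$ is allowed to range over all of $supp(\zeta^\varepsilon)$.
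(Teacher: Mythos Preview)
Your proposal is correct and follows essentially the same route as the paper: decompose $\mathcal{K}\zeta^\varepsilon$ via the kernel representation, drop the nonpositive $H$-contribution, absorb the $\ln diam(D)$ and $R$ terms into $o_\varepsilon(1)$, and freeze $b(x),q(x)$ at $\hat{x}$ with error $O(l_0)$ using local $C^1$ regularity together with the uniform bound $\mathcal{N}\zeta^\varepsilon\le\kappa_0+1$ from Lemma~\ref{lem4-1}. If anything, you are slightly more careful than the paper's write-up: you justify $H\ge 0$ by the maximum principle and you establish a two-sided bound on $\int_D\ln\frac{1}{|x-y|}\zeta^\varepsilon\,d\nu$ before multiplying by $b(x)-b(\hat{x})$, whereas the paper tacitly treats that integral as nonnegative; your formulation is cleaner but the substance is the same, with the paper's constant $C_L=(\kappa_0+1)\|b\|_{C^1}+\|q\|_{C^1}$ matching your $C_L=L_bC/(2\pi)+L_q$.
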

\begin{proof}
	For each $x\in supp(\zeta^\varepsilon_1)$, there holds
	$$\mathcal{K}\zeta^\varepsilon(x)+q(x)\le\frac{1}{2\pi}(b(\hat{x})+l_0||b||_{C^1(D)})\int_D\ln\frac{1}{|x-y|}\zeta^\varepsilon(y)d\nu(y)+q(\hat{x})+l_0||q||_{C^1(D)}+o_\varepsilon(1).$$
	By Lemma \ref{lem4-1}, we can take $\varepsilon$ sufficiently small such that 
	$$\int_D\ln\frac{diam(D)}{|x-y|}\zeta^\varepsilon(y)d\nu(y)=\mathcal{N}\zeta^\varepsilon(x)\le\kappa_0+1.$$
	Thus we have
	$$\mathcal{K}\zeta^\varepsilon(x)+q(x)\le\frac{b(\hat{x})}{2\pi}\int_D\ln\frac{1}{|x-y|}\zeta^\varepsilon(y)d\nu(y)+q(\hat{x})+((\kappa_0+1)||b||_{C^1(D)}+||q||_{C^1(D)})l_0+o_\varepsilon(1).$$
	Finally, we denote $C_L=(\kappa_0+1)||b||_{C^1(D)}+||q||_{C^1(D)}$ and complete the proof.	
\end{proof}

Using the strategy in \cite{T}, we are going to prove $diam\left(supp(\zeta^\varepsilon_1)\right)$ is of order $\varepsilon^{1-\beta}$ for every $\beta\in(0,1)$. However, the proof is different from Lemma \ref{lem3-4}.
\begin{lemma}\label{lem4-8}
	For every $\beta\in(0,1)$, there exists $l_0(\beta)>0$ such that for every $l_0<l_0(\beta)$ and $\varepsilon$ sufficiently small, one has
	\begin{equation}\label{4-17}
	diam\left(supp(\zeta^\varepsilon_1)\right)\le 2\varepsilon^{1-\beta},
	\end{equation}	
\end{lemma}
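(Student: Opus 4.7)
The plan is to adapt the Turkington-type covering argument already used in Lemma \ref{lem3-4} to the present critical-vanishing regime $\delta(\varepsilon)=1/\ln\frac{1}{\varepsilon}$, now restricted to the portion $\zeta^\varepsilon_1$ living inside $B_{l_0}(\hat x)$. The starting observation is that by \eqref{4-3} one has $\psi^\varepsilon\ge 0$ on $supp(\zeta^\varepsilon)$, so for every $x\in supp(\zeta^\varepsilon_1)$ the pointwise upper bound of Lemma \ref{lem4-7} for $\mathcal{K}\zeta^\varepsilon(x)+q(x)$, combined with the expansion $\mu^\varepsilon=\frac{\kappa_0}{2\pi}b(\hat x)+q(\hat x)+o_\varepsilon(1)$ from Lemma \ref{lem4-6}, gives
$$\int_D\ln\frac{1}{|x-y|}\zeta^\varepsilon(y)\,d\nu(y)\ \ge\ \kappa_0-\frac{2\pi C_L l_0}{b(\hat x)}-o_\varepsilon(1).$$

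First I would renormalize by setting $\Gamma(y):=\zeta^\varepsilon(y)b(y)/(\kappa_0\delta(\varepsilon))$, so that $\int_D\Gamma\,dm=1$, and divide the previous inequality by $\kappa_0\delta(\varepsilon)=\kappa_0/\ln\frac{1}{\varepsilon}$. This rewrites, after subtracting $\ln\frac{1}{\varepsilon}$, as
$$\int_D\ln\frac{\varepsilon}{|x-y|}\Gamma(y)\,dm(y)\ \ge\ -\Bigl(\frac{2\pi C_L l_0}{\kappa_0 b(\hat x)}+o_\varepsilon(1)\Bigr)\ln\frac{1}{\varepsilon}.$$
Next, split this integral over $B_{R\varepsilon}(x)$ and its complement. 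Inside $B_{R\varepsilon}(x)$ the $L^\infty$ bound $\zeta^\varepsilon\le \Lambda\delta(\varepsilon)/\varepsilon^2$ together with an explicit estimate of $\int_{B_\varepsilon(0)}\ln(\varepsilon/|y|)\,dm$ yields a universal $O(1)$ upper contribution; outside, the integrand is $\le -\ln R$. Choosing $R=\varepsilon^{-\beta}$ so that $\ln R=\beta\ln\frac{1}{\varepsilon}$, and imposing $l_0<l_0(\beta):=\beta\kappa_0 b(\hat x)/(8\pi C_L)$, one obtains for $\varepsilon$ sufficiently small
$$\int_{D\setminus B_{R\varepsilon}(x)}\Gamma(y)\,dm(y)\ <\ \frac{1}{2}.$$

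Finally, conclude by the standard disjoint-balls contradiction: if two distinct points $x_1,x_2\in supp(\zeta^\varepsilon_1)$ were separated by more than $2R\varepsilon=2\varepsilon^{1-\beta}$, then the balls $B_{R\varepsilon}(x_i)$ would be disjoint and each would carry strictly more than $\tfrac{1}{2}$ of the total mass of $\Gamma$, contradicting $\int_D\Gamma\,dm=1$. Hence $diam(supp(\zeta^\varepsilon_1))\le 2\varepsilon^{1-\beta}$.

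The delicate point — and the reason the present regime is strictly harder than that of Lemma \ref{lem3-4} — is that the identity $\delta(\varepsilon)\ln\frac{1}{\varepsilon}=1$ converts the Lipschitz error $O(l_0)$ arising from freezing $b,q$ at $\hat x$ in Lemma \ref{lem4-7} into a term of size $l_0\ln\frac{1}{\varepsilon}$ in the lower bound for the logarithmic potential. Consequently $\ln R$ can only be taken of the order of $\ln\frac{1}{\varepsilon}$, which produces the power $\varepsilon^{1-\beta}$ rather than a smaller one, and forces the threshold $l_0(\beta)$ to depend on $\beta$.
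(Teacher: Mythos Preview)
Your proof is correct and follows essentially the same route as the paper's: combine Lemmas \ref{lem4-6} and \ref{lem4-7} to obtain a lower bound on the logarithmic potential for points of $supp(\zeta^\varepsilon_1)$, normalize to $\Gamma$, split over $B_{R\varepsilon}(x)$ with $R=\varepsilon^{-\beta}$, choose $l_0(\beta)$ so that the $C_Ll_0$ term is absorbed by $\ln R=\beta\ln\frac{1}{\varepsilon}$, and finish with the disjoint-balls contradiction. The only difference is cosmetic: the paper works with threshold $\tfrac{1}{3}$ and explicitly introduces the splitting $\Gamma=\Gamma_1+\Gamma_2$ in the final step, whereas your use of the threshold $\tfrac{1}{2}$ applied directly to the full $\Gamma$ is slightly cleaner and equally valid.
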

\begin{proof}
	By \eqref{4-15} and \eqref{4-16}, for each $x\in supp(\zeta^\varepsilon_1)$ there holds
	\begin{equation}\label{4-18}
	\mu^{\varepsilon,\Lambda}=\frac{\kappa_0}{2\pi}b(\hat{x})+q(\hat{x})+o_\varepsilon(1)\le\frac{b(\hat{x})}{2\pi}\int_D \ln\frac{1}{|x-y|}\zeta^{\varepsilon,\Lambda}(y)d\nu(y)+q(\hat{x})+C_Ll_0+o_\varepsilon(1).
	\end{equation}
	To normalize the $L_1$ norm of $\zeta^\varepsilon$, we denote 
	$$\Gamma(y)=\kappa_0^{-1}\delta^{-1}\zeta^\varepsilon(y)b(y), \ \ \Gamma_1(y)=\kappa_0^{-1}\delta^{-1}\zeta^\varepsilon_1(y)b(y), \ \ \Gamma_2(y)=\kappa_0^{-1}\delta^{-1}\zeta^\varepsilon_2(y)b(y).$$ 
	One obtains 
	$$\int_D\Gamma(y)dm(y)=1, \ \ \int_D\Gamma_1(y)dm(y)=1-o_\varepsilon(1),\ \ \int_D\Gamma_2(y)dm(y)=o_\varepsilon(1).$$ 
	Notice that $\delta(\varepsilon)=1/\ln\frac{1}{\varepsilon}$. By \eqref{4-18}, for each $x\in supp(\zeta^\varepsilon_1)$ one has
	\begin{equation}\label{4-19}
	-(o_\varepsilon(1)+C_Ll_0)\ln\frac{1}{\varepsilon}\le\int_D\ln\frac{\varepsilon}{|x-y|}\Gamma(y)dm(y),
	\end{equation}
	As in Lemma \ref{lem3-4}, if we choose arbitrary $R>1$, a simple calculation yields 
	\begin{equation*}
	\begin{split}
	\int_D\ln\frac{\varepsilon}{|x-y|}\Gamma(y)dm(y)&=\int_{D\setminus B_{R\varepsilon}(x)}\ln\frac{\varepsilon}{|x-y|}\Gamma(y)dm(y)+\int_{B_{R\varepsilon}(x)}\ln\frac{\varepsilon}{|x-y|}\Gamma(y)dm(y)\\
	&=\ln\frac{1}{R}\int_{D\setminus B_{R\varepsilon}(x)}\Gamma(y)dm(y)+C_\Gamma,
	\end{split}
	\end{equation*}
	where the constant $C_\Gamma>0$ does not depend on $\varepsilon$ and $l_0$. Using \eqref{4-19}, for each $x\in supp(\zeta^\varepsilon_1)$, there holds
	\begin{equation}\label{4-20}
	\int_{D\setminus B_{R\varepsilon}(x)}\Gamma(y)dm(y)\le \left(C_\Gamma+(o_\varepsilon(1)+C_Ll_0)\ln\frac{1}{\varepsilon}\right)/\ln R.
	\end{equation}
	For every $\beta\in(0,1)$, we can take $R=(\frac{1}{\varepsilon})^\beta$ and $l_0<l_0(\beta)$ such that $C_Ll_0<\frac{\beta}{12}$. We also let $\varepsilon$ be sufficiently small such that $\varepsilon<\text{exp}(\frac{-6C_\Gamma}{\beta})$ and the term $o_\varepsilon(1)$ in \eqref{4-20} is less than $\frac{\beta}{12}$. Then for each $x\in supp(\zeta^\varepsilon_1)$, we have
	\begin{equation}\label{4-21}
	\int_{D\setminus B_{R\varepsilon}(x)}\Gamma(y)dm(y)<\left(\frac{\beta}{6}+\frac{\beta}{12}+\frac{\beta}{12}\right)/\beta=\frac{1}{3}.
	\end{equation}
	Now we claim that $diam\left(supp(\zeta^\varepsilon_1)\right)\le 2R\varepsilon$. Otherwise there would exist $x_1,x_2\in supp(\zeta^\varepsilon_1)$ with the property $B_{R\varepsilon}(x_1)\cap B_{R\varepsilon}(x_2)=\varnothing$. We let $\varepsilon<\varepsilon(l_0)$ such that $\int_D\Gamma_2(y)dm(y)<\frac{1}{3}$. By \eqref{4-21}, there holds
	\begin{equation*}
	\begin{split}
	\int_D\Gamma_1(y)dm(y)&\ge\int_{B_{R\varepsilon}(x_1)}\Gamma_1(y)dm(y)+\int_{B_{R\varepsilon}(x_2)}\Gamma_1(y)dm(y)\\
	&\ge\int_{B_{R\varepsilon}(x_1)}\Gamma(y)dm(y)+\int_{B_{R\varepsilon}(x_2)}\Gamma(y)dm(y)-\int_D\Gamma_2(y)dm(y)\\
	&>\frac{2}{3}+\frac{2}{3}-\frac{1}{3}\\
	&>1
	\end{split}
	\end{equation*}
	which contradicts $\int_D\Gamma_1(y)dm(y)\le1$. Recalling $R=(\frac{1}{\varepsilon})^\beta$, we deduce that for every $\beta\in(0,1)$ and $\varepsilon$ sufficiently small, there holds
	$$diam\left(supp(\zeta^\varepsilon_1)\right)\le 2(\frac{1}{\varepsilon})^\beta\cdot\varepsilon\le 2\varepsilon^{1-\beta}.$$ 
	Hence the proof is complete.
\end{proof}

Denote the center of $\zeta^\varepsilon_1$ as
\begin{equation*}
X^\varepsilon_1=\frac{\int_D x\zeta^\varepsilon_1(x)dm(x)}{\int_D \zeta^\varepsilon_1(x)dm(x)}.
\end{equation*}
We are now going to study the location of $X^\varepsilon_1$ as $\varepsilon$ goes to zero. 
\begin{lemma}\label{lem4-9}
	As $\varepsilon\to 0^+$, one has $X^\varepsilon_1 \to \hat{x}$.
\end{lemma}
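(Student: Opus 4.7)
The plan is to argue by contradiction, combining two complementary concentration facts already established: Lemma~\ref{lem4-8} (the small support of $\zeta^\varepsilon_1$) and Lemma~\ref{lem4-5} (almost all the mass of $\zeta^\varepsilon$ sits inside the shrinking neighborhood $\mathcal{P}_\varepsilon$ of $\hat x$). Assume for contradiction there exist $l_1>0$ and a subsequence along which $|X^\varepsilon_1-\hat x|\ge l_1$. First I would fix $\beta\in(0,1)$ together with a matching $l_0<l_0(\beta)$, and apply Lemma~\ref{lem4-8} to place $\operatorname{supp}(\zeta^\varepsilon_1)\subset \bar B_{2\varepsilon^{1-\beta}}(X^\varepsilon_1)$; for $\varepsilon$ sufficiently small this ball lies inside $\{x\in D:|x-\hat x|\ge l_1/2\}$, so $\operatorname{supp}(\zeta^\varepsilon_1)$ stays uniformly bounded away from $\hat x$.

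Next I would exploit the uniqueness of the maximizer of $\phi\in C(\bar D)$: there exists $\delta_0>0$ with $\phi(\hat x)-\phi(x)\ge\delta_0$ whenever $|x-\hat x|\ge l_1/2$. Since $V_P(\varepsilon)=o_\varepsilon(1)$, eventually $\mathcal{P}_\varepsilon\subset\{x:|x-\hat x|<l_1/2\}$. Intersecting the two inclusions forces $\operatorname{supp}(\zeta^\varepsilon_1)\cap\mathcal{P}_\varepsilon=\varnothing$ for small $\varepsilon$, so $\zeta^\varepsilon_1\le\zeta^\varepsilon_{p_2}$ pointwise almost everywhere. Integrating against $d\nu$ and dividing by $\delta(\varepsilon)$, Lemma~\ref{lem4-5} then yields $\kappa^\varepsilon_1\le\kappa^\varepsilon_{p_2}=o_\varepsilon(1)\,\kappa_0$.

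For the contradicting lower bound on $\kappa^\varepsilon_1$, I would observe that $\hat x\in\mathcal{P}_\varepsilon$ and $\operatorname{diam}(\mathcal{P}_\varepsilon)=o_\varepsilon(1)$, so $\mathcal{P}_\varepsilon\subset B_{l_0}(\hat x)$ for $\varepsilon$ small, yielding $\zeta^\varepsilon_{p_1}\le\zeta^\varepsilon_1$ and hence $\kappa^\varepsilon_1\ge\kappa^\varepsilon_{p_1}=(1-o_\varepsilon(1))\kappa_0$, again by Lemma~\ref{lem4-5}. The two bounds combine to $(1-o_\varepsilon(1))\kappa_0\le o_\varepsilon(1)\kappa_0$, which is impossible for $\varepsilon$ sufficiently small, and the contradiction is complete.

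The only point that requires care, rather than a genuine obstacle, is the order of parameter choices: $\beta\in(0,1)$ and then $l_0<l_0(\beta)$ must be fixed first (so that Lemma~\ref{lem4-8} applies with its diameter bound $2\varepsilon^{1-\beta}$); only afterwards do we let $\varepsilon\to0^+$ and use the smallness of $V_P(\varepsilon)$ together with $\mathcal{P}_\varepsilon\subset B_{l_0}(\hat x)$. With this bookkeeping in place, the argument reduces to the inclusion chain sketched above and no further analytic difficulties arise.
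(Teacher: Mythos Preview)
Your proposal is correct and follows essentially the same approach as the paper: both argue by contradiction, use Lemma~\ref{lem4-8} to confine $\operatorname{supp}(\zeta^\varepsilon_1)$ to a small ball away from $\hat x$, observe that this ball is disjoint from the shrinking set $\mathcal{P}_\varepsilon$, and then invoke the mass estimates of Lemma~\ref{lem4-5} to reach a contradiction. The only cosmetic difference is in the bookkeeping of the contradiction: you phrase it as the pair of inequalities $\kappa^\varepsilon_1\le\kappa^\varepsilon_{p_2}=o_\varepsilon(1)\kappa_0$ and $\kappa^\varepsilon_1\ge\kappa^\varepsilon_{p_1}=(1-o_\varepsilon(1))\kappa_0$, while the paper writes it as $\kappa^\varepsilon_1+\kappa^\varepsilon_{p_1}>\kappa_0$ with both summands exceeding $\kappa_0/2$ and the supports disjoint; these are equivalent rearrangements of the same counting argument.
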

\begin{proof}
	Assume the lemma is false. We fix $l_0<l_0(\beta)$ and suppose $X^\varepsilon_1\to \tilde{x}$ with $\tilde{x}\ne\hat{x}$. By Lemma \ref{lem4-8} there exists a subsequence $\{\varepsilon_i\}_i^\infty$ satisfying $\varepsilon_i\to 0^+$ as $i\to +\infty$,  such that $supp(\zeta^{\varepsilon_i}_1)\subset B_{2{\varepsilon_i}^{1-\beta}}(\tilde{x})$. We can let $i$ be sufficiently large such that 
	\begin{equation}\label{4-22}
	B_{2{\varepsilon_i}^{1-\beta}}(\tilde{x})\cap\mathcal{P}_{\varepsilon_i}=\varnothing,
	\end{equation}
	where $\mathcal{P}_{\varepsilon_i}$ is the $V_P(\varepsilon_i)$ neighborhood of $\hat{x}$ we have defined before Lemma \ref{lem4-5}. As $i\to+\infty$, there holds $\kappa^{\varepsilon_i}_1>\frac{1}{2}\kappa_0$ and $\kappa^{\varepsilon_i}_{P_1}>\frac{1}{2}\kappa_0$, which yields a contradiction since \eqref{4-22} holds and $\kappa^{\varepsilon_i}_1+\kappa^{\varepsilon_i}_{P_1}>\kappa_0$. So we have proved the lemma.
\end{proof}

In order to show $\zeta^\varepsilon_2$ vanishes as $\varepsilon\to 0^+$, we give the following lemma.
\begin{lemma}\label{lem4-10}
	One has
	\begin{equation}\label{4-23}
	\phi(\hat{x})>\text{max}_{\bar{D}}q.
	\end{equation}
\end{lemma}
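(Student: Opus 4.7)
The plan is to derive the strict inequality $\phi(\hat{x})>\max_{\bar{D}}q$ from three structural ingredients already in hand: the non-negativity of $b$ on $\bar D$ together with its strict positivity on $D$, the uniqueness of $\hat x$ as an interior maximizer of $\phi$, and the trivial pointwise bound $\phi=\tfrac{\kappa_0}{4\pi}b+q\geq q$.

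First I would pick $\tilde x\in\bar D$ with $q(\tilde x)=\max_{\bar D}q$. Since $\phi(\hat x)\geq \phi(\tilde x)=\tfrac{\kappa_0}{4\pi}b(\tilde x)+q(\tilde x)\geq q(\tilde x)$, we at least have the weak version $\phi(\hat x)\geq \max_{\bar D}q$. The whole point is to rule out equality.

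Next I would split into two cases depending on the location of $\tilde x$. If $\tilde x\in D$, then by the regularity hypothesis on the lake (i.e.\ $b>0$ on $D$), we have $b(\tilde x)>0$, so $\phi(\tilde x)>q(\tilde x)$, and hence $\phi(\hat x)\geq \phi(\tilde x)>\max_{\bar D}q$. If instead $\tilde x\in\partial D$, then $\tilde x\neq \hat x$ (since $\hat x\in D$ by the hypothesis of Theorem~\ref{thm2}), and the assumed uniqueness of $\hat x$ as maximizer of $\phi$ on $\bar D$ gives the strict inequality $\phi(\hat x)>\phi(\tilde x)\geq q(\tilde x)=\max_{\bar D}q$.

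In both cases one concludes $\phi(\hat x)>\max_{\bar D}q$, which is \eqref{4-23}. There is no real obstacle here; the lemma is essentially a two-line dichotomy using the stated structural assumptions and does not require any of the variational or asymptotic machinery developed earlier.
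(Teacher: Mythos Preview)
Your proof is correct and follows essentially the same dichotomy as the paper's own argument, splitting on whether a maximizer $\tilde x$ of $q$ lies in $D$ (use $b(\tilde x)>0$) or on $\partial D$ (use $\tilde x\neq\hat x$ and uniqueness of $\hat x$). The only cosmetic difference is that you argue directly while the paper phrases it as a contradiction, and you avoid the paper's further subcase $b(x_0)>0$ versus $b(x_0)=0$ on the boundary by invoking uniqueness immediately.
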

\begin{proof}
	The proof is by contradiction. We suppose $\phi(\hat{x})\le\text{max}_{\bar{D}}q$. Recalling that $\phi(x)=\frac{\kappa_0}{4\pi}b(x)+q(x)$ and $b(x)>0\ in\ D$, only two cases may happen:
	\begin{itemize}
	\item[(i)]
	$q(x)$ attains its maximum at $x_0\in D$. One obtains
	$$\phi(x_0)>\text{max}_{\bar{D}}q\ge \phi(\hat{x}),$$
	which contradicts the definition of $\hat{x}$. 
	\item[(ii)]
	$q(x)$ attains its maximum at $x_0\in \partial D$. If $b(x_0)>0$, we can argue as before. While if $b(x_0)=0$ we have 
	$$\phi(\hat{x})=\phi(x_0),$$
	which contradicts the assumption that $\hat{x}\in D$ is unique.
	\end{itemize}
    Thus the proof is completed.
\end{proof} 

Now we can elliminate $\zeta^\varepsilon_2$ from $\zeta^\varepsilon$, which means $\zeta^\varepsilon_1$ is actually $\zeta^\varepsilon$.
\begin{lemma}\label{lem4-11}
	As $\varepsilon\to0^+$, one has $\zeta^\varepsilon=\zeta^\varepsilon_1$. 
\end{lemma}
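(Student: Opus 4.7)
The strategy is to show $\psi^\varepsilon(x) \le 0$ pointwise on $D \setminus B_{l_0}(\hat x)$, which by the reduced representation $\zeta^\varepsilon = \frac{\delta(\varepsilon)}{\varepsilon^2}f(\psi^\varepsilon)\chi_{\{\psi^\varepsilon > 0\}}$ from Lemma \ref{lem4-1} forces $\zeta^\varepsilon_2 \equiv 0$. I would split $\mathcal K \zeta^\varepsilon = \mathcal K \zeta^\varepsilon_1 + \mathcal K \zeta^\varepsilon_2$ and control each piece separately on that region, and then use the sharp value of $\mu^\varepsilon$ from Lemma \ref{lem4-6} together with the strict separation from Lemma \ref{lem4-10} to close the argument.

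For the core part $\mathcal K \zeta^\varepsilon_1$, Lemmas \ref{lem4-8} and \ref{lem4-9} give $\operatorname{supp}(\zeta^\varepsilon_1) \subset B_{l_0/2}(\hat x)$ for $\varepsilon$ small, so $|x-y|\ge l_0/2$ whenever $x \in D \setminus B_{l_0}(\hat x)$ and $y \in \operatorname{supp}(\zeta^\varepsilon_1)$; here $G(x,y)$ and $R(x,y)$ are uniformly bounded, and $\mathcal K \zeta^\varepsilon_1(x) = O(\delta(\varepsilon)) = o_\varepsilon(1)$. For the tail $\mathcal K \zeta^\varepsilon_2$, I would invoke the bathtub principle exactly as in the proof of Lemma \ref{lem4-1}, but with total mass $\kappa^\varepsilon_2 \delta(\varepsilon)$ in place of $\kappa_0 \delta(\varepsilon)$. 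The extremizer has radius $r = \varepsilon\sqrt{\kappa^\varepsilon_2/(C_B\pi)}$ and produces
\[
\mathcal N \zeta^\varepsilon_2(x) \le \kappa^\varepsilon_2 \delta(\varepsilon)\ln\tfrac{1}{\varepsilon} + \tfrac{1}{2}\kappa^\varepsilon_2 \delta(\varepsilon) \ln\tfrac{1}{\kappa^\varepsilon_2} + O(\kappa^\varepsilon_2\delta(\varepsilon)).
\]
By Lemma \ref{lem4-5}, $\kappa^\varepsilon_2 = o_\varepsilon(1)$, and the critical scaling $\delta(\varepsilon)\ln(1/\varepsilon) = 1$ makes the first term equal $\kappa^\varepsilon_2 = o_\varepsilon(1)$, while $\kappa^\varepsilon_2\ln(1/\kappa^\varepsilon_2) \to 0$ handles the second, giving $\mathcal K \zeta^\varepsilon_2(x) = o_\varepsilon(1)$ uniformly in $x$.

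Combining both pieces, for every $x \in D \setminus B_{l_0}(\hat x)$ one has $\mathcal K \zeta^\varepsilon(x) + q(x) \le \max_{\bar D}q + o_\varepsilon(1)$, whereas Lemma \ref{lem4-6} gives $\mu^\varepsilon = q(\hat x) + \tfrac{\kappa_0}{2\pi}b(\hat x) + o_\varepsilon(1)$. Lemma \ref{lem4-10} supplies the decisive strict inequality $\phi(\hat x) > \max_{\bar D} q$, equivalent to $\max_{\bar D} q - q(\hat x) < \tfrac{\kappa_0}{4\pi}b(\hat x)$, whence
\[
\mu^\varepsilon - \bigl(\mathcal K \zeta^\varepsilon(x) + q(x)\bigr) \ge \tfrac{\kappa_0}{2\pi}b(\hat x) - \tfrac{\kappa_0}{4\pi}b(\hat x) - o_\varepsilon(1) = \tfrac{\kappa_0}{4\pi}b(\hat x) - o_\varepsilon(1) > 0
\]
for all small $\varepsilon$, since $b(\hat x) > 0$. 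Therefore $\psi^\varepsilon(x) < 0$ on $D \setminus B_{l_0}(\hat x)$, which gives $\zeta^\varepsilon_2 \equiv 0$ and hence $\zeta^\varepsilon = \zeta^\varepsilon_1$.

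The main obstacle is the uniform tail estimate $\mathcal K \zeta^\varepsilon_2 = o_\varepsilon(1)$: a priori $\zeta^\varepsilon_2$ could be concentrated on a vanishingly small set, and the logarithmic singularity of $G$ threatens to amplify its convolution. The computation closes only because the critical regime $\delta(\varepsilon)\ln(1/\varepsilon) = 1$ matches perfectly with the mass decay $\kappa^\varepsilon_2 \to 0$ from Lemma \ref{lem4-5}, so that the worst-case bathtub contribution $\kappa^\varepsilon_2 \delta(\varepsilon)\ln(1/\varepsilon) = \kappa^\varepsilon_2$ stays $o_\varepsilon(1)$.
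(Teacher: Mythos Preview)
Your proposal is correct and follows essentially the same route as the paper: split $\mathcal K\zeta^\varepsilon$ into the $\zeta^\varepsilon_1$ and $\zeta^\varepsilon_2$ contributions, bound the first by the separation $\operatorname{supp}(\zeta^\varepsilon_1)\subset B_{l_0/2}(\hat x)$ coming from Lemmas~\ref{lem4-8}--\ref{lem4-9}, bound the second by the bathtub computation of Lemma~\ref{lem4-1} with mass $\kappa^\varepsilon_2\delta(\varepsilon)$, and then close with the strict gap $\phi(\hat x)>\max_{\bar D}q$ from Lemma~\ref{lem4-10} together with the value of $\mu^\varepsilon$ from Lemma~\ref{lem4-6}. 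The only cosmetic difference is that the paper verifies $\psi^\varepsilon<0$ just on $\operatorname{supp}(\zeta^\varepsilon_2)$ rather than on all of $D\setminus B_{l_0}(\hat x)$, and in your final displayed chain the first ``$\ge$'' should be ``$>$'' (it uses the strict inequality from Lemma~\ref{lem4-10}); neither affects the conclusion.
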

\begin{proof}
	According to Lemma \ref{lem4-8} and \ref{lem4-9}, for a fixed $l$ satisfying $l_0<l_0(\beta)$, we can take  $\varepsilon$ sufficiently small such that $\varepsilon<\varepsilon(l_0)$ and $supp(\zeta^\varepsilon_1)\subset B_{l/2}(\hat{x})$. That is, $dist\left(supp(\zeta^\varepsilon_1),supp(\zeta^\varepsilon_2)\right)>\frac{l}{2}$. Using the bathtub principle \cite{LL} and preceeding as in Lemma \ref{4-1}, we obtain that for each $x\in supp(\zeta^\varepsilon_2)$, there holds
	\begin{equation*}
	\begin{split}
	\mathcal{K}\zeta^\varepsilon(x)&=\frac{b(x)}{2\pi}\int_D\ln\frac{1}{|x-y|}\zeta^\varepsilon_1(y)d\nu(y)+\frac{b(x)}{2\pi}\int_D\ln\frac{1}{|x-y|}\zeta^\varepsilon_2(y)d\nu(y)+C\delta(\varepsilon)\\
	&\le \frac{b(x)}{2\pi}\int_D\ln(l/2)\zeta^\varepsilon_1(y)d\nu(y)+o_\varepsilon(1)\cdot\kappa_0+C\delta(\varepsilon)\\
	&= o_\varepsilon(1),
	\end{split}
	\end{equation*}
	where $C$ denotes a constant independent of $\varepsilon$. By Lemma \ref{lem4-10} and Lemma \ref{lem4-6}, for each $x\in supp(\zeta^\varepsilon_2)$ and $\varepsilon$ sufficiently small, we have
	$$\mathcal{K}\zeta^\varepsilon(x)+q(x)\le\text{max}_{\bar{D}}q+o_\varepsilon(1)<\phi(\hat{x})+o_\varepsilon(1)<\mu^\varepsilon.$$
	As a result, $\zeta^\varepsilon_2=0$ and $\zeta^\varepsilon=\zeta^\varepsilon_1$. Hence we complete the proof.	
\end{proof}

As Lemma \ref{lem3-11} in Section 3, we are going to study the asymptotic shape of $\zeta^\varepsilon$ by scaling technique. We has defined the rescaled version of $\zeta^\varepsilon$ to be 
$$\xi^\varepsilon(x)=\frac{\varepsilon^2}{\delta(\varepsilon)}\zeta^\varepsilon(X^\varepsilon+\varepsilon x), \ \ x\in D^\varepsilon:=\{x\in\mathbb{R}^2 \ | \ X^\varepsilon+\varepsilon x\in D\}.$$
For convenience, we set $\xi^\varepsilon(x)=0$ if $x\in\mathbb{R}^2\setminus D^\varepsilon$.
 
We denote by $g^\varepsilon$ the symmetric radially nonincreasing Lebesgue-rearrangement of $\xi^\varepsilon$ centered at the origin. The following lemma determines the asymptotic nature of $\zeta^\varepsilon$ in terms of its scaled version $\xi^\varepsilon$.
\begin{lemma}\label{lem4-12}
	Every accumulation points of $\xi^\varepsilon(x)$ as $\varepsilon \to 0^+$, in the weak topology of $L^2$, are radially nonincreasing functions.
\end{lemma}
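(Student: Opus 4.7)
The plan is to compare $\xi^\varepsilon$ with its symmetric-decreasing rearrangement $g^\varepsilon$ via a competitor argument, use the Riesz rearrangement inequality together with the maximality of $\zeta^\varepsilon$ to pin the two logarithmic self-interactions asymptotically together, and then apply the Burchard--Guo rigidity theorem \cite{BG} to transfer the radial monotonicity from $g^\varepsilon$ to the weak $L^2$ limit of $\xi^\varepsilon$.

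First, by Lemma \ref{lem4-8} (choosing any $\beta\in(0,1)$) together with the pointwise bound $\xi^\varepsilon\le\Lambda$ inherited from $\mathcal{A}_\varepsilon$, the sequences $\xi^\varepsilon$ and $g^\varepsilon$ are supported in a common bounded set and uniformly bounded in $L^\infty$. After extracting subsequences, $\xi^\varepsilon\rightharpoonup\xi^0$ and $g^\varepsilon\rightharpoonup g^0$ weakly in $L^2$, with $g^0$ automatically radially nonincreasing (the property is preserved by weak limits, through testing against differences of characteristic functions of small balls placed at opposite radii).

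The heart of the argument is a competitor $\tilde\zeta^\varepsilon$ obtained by placing the rearranged profile $g^\varepsilon$ at the vortex center:
\[\tilde\zeta^\varepsilon(x):=\frac{\delta(\varepsilon)}{\varepsilon^2}\,g^\varepsilon\!\left(\tfrac{x-X^\varepsilon}{\varepsilon}\right)\cdot\frac{b(X^\varepsilon)}{b(x)},\]
together with a vanishing mass-preserving correction to enforce $\int\tilde\zeta^\varepsilon\,d\nu=\kappa_0\delta(\varepsilon)$; because the support lies in a ball of radius $O(\varepsilon^{1-\beta})$ around $X^\varepsilon$ on which $b$ varies by $o_\varepsilon(1)$, this correction costs at most $o_\varepsilon(1)\delta(\varepsilon)$ in each energy piece. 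Since $\mathcal{F}_\varepsilon$ depends only on the distribution function of its argument and $\xi^\varepsilon$, $g^\varepsilon$ are equimeasurable, one has $\mathcal{F}_\varepsilon(\tilde\zeta^\varepsilon)=\mathcal{F}_\varepsilon(\zeta^\varepsilon)+o_\varepsilon(1)\delta(\varepsilon)$. By Lemma \ref{lem4-9} and the continuity of $b$, $q$, and $R$, the contributions of $R$, of $q$, and of the macroscopic part of the logarithmic kernel coincide for $\zeta^\varepsilon$ and $\tilde\zeta^\varepsilon$ up to $o_\varepsilon(1)\delta^2(\varepsilon)$. After rescaling $x=X^\varepsilon+\varepsilon u$, the leading discrepancy reduces to the local self-interaction
\[I(\eta):=\frac{b(\hat x)^{2}}{4\pi}\iint \eta(u)\eta(v)\,\ln\frac{1}{|u-v|}\,du\,dv,\]
and the Riesz rearrangement inequality applied to the nonnegative radially decreasing kernel $\ln\frac{C}{|u-v|}$ on the joint compact support yields $I(\xi^\varepsilon)\le I(g^\varepsilon)$. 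Combined with $\mathcal{E}(\zeta^\varepsilon)\ge\mathcal{E}(\tilde\zeta^\varepsilon)$, this forces $I(g^\varepsilon)-I(\xi^\varepsilon)=o_\varepsilon(1)$.

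Passing this identity to the limit -- the quadratic form $I$ arises from a Hilbert--Schmidt kernel on the common compact support and is hence weakly continuous on $L^2$-bounded sequences -- we obtain $I(\xi^0)=I(g^0)$. The concluding step, which I expect to be the main obstacle, is to promote this energy equality into $\xi^0=g^0$: this is the content of the Burchard--Guo rigidity theorem \cite{BG}, which requires that $\xi^0$ and $g^0$ share the same distribution function. That equimeasurability is obtained by upgrading weak to strong $L^2$ convergence of $\xi^\varepsilon$ and $g^\varepsilon$, which in turn follows from the energy equality together with the $L^\infty$ confinement and the identity $\|\xi^\varepsilon\|_2=\|g^\varepsilon\|_2$; once strong convergence is in hand the distribution functions pass to the limit, and \cite{BG} applies to force $\xi^0$ -- which by definition of $X^\varepsilon$ is centered at the origin -- to coincide with the radially nonincreasing function $g^0$, completing the proof.
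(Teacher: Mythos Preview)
Your approach is essentially the paper's: Riesz's rearrangement inequality for one direction, a rearranged competitor compared via maximality of $\zeta^\varepsilon$ for the other, then Burchard--Guo \cite{BG} together with the centering $\int x\,\xi^\varepsilon=0$ to conclude. The paper's competitor $\zeta^\varepsilon_m$ omits your factor $b(X^\varepsilon)/b(x)$, but this is immaterial since $b$ varies by $o_\varepsilon(1)$ on the support.

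One correction to your final paragraph: the logic there is circular---you invoke strong $L^2$ convergence in order to obtain the equimeasurability needed to apply \cite{BG}, but you have not said where strong convergence comes from other than ``the energy equality'', and \cite{BG} is precisely the tool that produces it. The clean route, which the paper states tersely at the limit level and which you nearly reach, is to apply the Burchard--Guo compactness theorem at the \emph{sequence} level: from $I(g^\varepsilon)-I(\xi^\varepsilon)\to 0$ with $g^\varepsilon=(\xi^\varepsilon)^*$ one obtains translations $T_\varepsilon$ with $\|\xi^\varepsilon\circ T_\varepsilon-g^\varepsilon\|_{L^2}\to 0$, and the center-of-mass normalization forces $T_\varepsilon\to \mathrm{id}$, whence $\xi^\varepsilon\to g^0$ strongly and $\xi^0=g^0$. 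This dissolves the equimeasurability issue you correctly flagged.
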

\begin{proof}
	Up to subsequence we may assume that $\xi^\varepsilon\to\xi^*$ and $g^\varepsilon\to g^*$ weakly in $L^2(\mathbb{R}^2)$ as $\varepsilon\to0^+$. By Riesz's rearrangement inequality, we obtain
	$$\int_{\mathbb{R}^2}\int_{\mathbb{R}^2}\ln\frac{1}{|x-y|}\xi^\varepsilon(x)\xi^\varepsilon(y)dm(x)dm(y)\le\int_{\mathbb{R}^2}\int_{\mathbb{R}^2}\ln\frac{1}{|x-y|}g^\varepsilon(x)g^\varepsilon(y)dm(x)dm(y).$$
	Letting $\varepsilon\to 0^+$, we have
	\begin{equation}\label{4-24}
	\int_{\mathbb{R}^2}\int_{\mathbb{R}^2}\ln\frac{1}{|x-y|}\xi^*(x)\xi^*(y)dm(x)dm(y)\le\int_{\mathbb{R}^2}\int_{\mathbb{R}^2}\ln\frac{1}{|x-y|}g^*(x)g^*(y)dm(x)dm(y).
	\end{equation}
	On the other hand, define 
	$$\zeta^\varepsilon_m(x):=\frac{\delta(\varepsilon)}{\varepsilon^2}g(\varepsilon^{\varepsilon}(x-X^\varepsilon)),\ \ x\in D,$$
	one can easily verify
	$$0\le\zeta^\varepsilon_m\le\text{max}_{\bar{D}}\zeta^\varepsilon \ a.e \ on \ D, \ \ \int_D\zeta^\varepsilon_md\nu=\kappa_0\delta(\varepsilon)+O(\varepsilon)\cdot\delta(\varepsilon).$$
    As $\varepsilon\to 0^+$, a direct calculation yields
	$$(\ln\frac{1}{\varepsilon})^2\mathcal{E}(\zeta^\varepsilon)=\frac{(b(\hat{x}))^3}{4\pi}\int_{\mathbb{R}^2}\ln\frac{1}{|x-y|}\xi^\varepsilon(x)\xi^\varepsilon(y)dm(x)dm(y)+\frac{1}{2}\phi(\hat{x})\kappa_0\cdot\ln\frac{1}{\varepsilon}+A_1(\varepsilon),$$
	and
	$$(\ln\frac{1}{\varepsilon})^2\mathcal{E}(\zeta^\varepsilon_m)=\frac{(b(\hat{x}))^3}{4\pi}\int_{\mathbb{R}^2}\ln\frac{1}{|x-y|}g^\varepsilon(x)g^\varepsilon(y)dm(x)dm(y)+\frac{1}{2}\phi(\hat{x})\kappa_0\cdot\ln\frac{1}{\varepsilon}+A_2(\varepsilon),$$
	where
	$$A_1(\varepsilon)=A_2(\varepsilon)+O(\varepsilon)<\infty$$
	There is an $\left(O(\varepsilon)\cdot\delta^2(\varepsilon)\right)$-perturbation $\zeta^\varepsilon_b$ of $\zeta^\varepsilon_m$ which belongs to $\mathcal{A}_\varepsilon$. Hence
	$$\mathcal{E}(\zeta^\varepsilon)\ge\mathcal{E}(\zeta^\varepsilon_b)\ge \mathcal{E}(\zeta^\varepsilon_m)+O(\varepsilon)\cdot\delta^2(\varepsilon).$$
	Therefore, we conclude 
	$$\int_{\mathbb{R}^2}\int_{\mathbb{R}^2}\ln\frac{1}{|x-y|}\xi^\varepsilon(x)\xi^\varepsilon(y)dm(x)dm(y)\ge\int_{\mathbb{R}^2}\int_{\mathbb{R}^2}\ln\frac{1}{|x-y|}g^\varepsilon(x)g^\varepsilon(y)dm(x)dm(y)+O(\varepsilon). $$
	Again letting $\varepsilon\to 0^+$, one obtains
	\begin{equation}\label{4-25}
	\int_{\mathbb{R}^2}\int_{\mathbb{R}^2}\ln\frac{1}{|x-y|}\xi^*(x)\xi^*(y)dm(x)dm(y)\ge\int_{\mathbb{R}^2}\int_{\mathbb{R}^2}\ln\frac{1}{|x-y|}g^*(x)g^*(y)dm(x)dm(y).
	\end{equation}
	Combining \eqref{4-24} and \eqref{4-25}, we have
	$$\int_{\mathbb{R}^2}\int_{\mathbb{R}^2}\ln\frac{1}{|x-y|}\xi^*(x)\xi^*(y)dm(x)dm(y)=\int_{\mathbb{R}^2}\int_{\mathbb{R}^2}\ln\frac{1}{|x-y|}g^*(x)g^*(y)dm(x)dm(y).$$
	According to the main results in Burchard-Guo \cite{BG}, there exists a translation $\mathcal{T}$ of $\mathbb{R}^2$ such that $\mathcal{T}\zeta^*=g^*$. Notice that 
	$$\int_{\mathbb{R}^2}x\zeta^*(x)dm(x)=\int_{\mathbb{R}^2}x\zeta^*(x)dm(x)=0.$$
	Hence $\zeta^*=g^*$. The proof is complete.	
\end{proof}

\section{Proof of Theorem \ref{thm3}}
Recall that in Theorem \ref{thm3}, we assume $\delta(\varepsilon)\ln\frac{1}{\varepsilon}=o_\varepsilon(1)$ and denote $\mathcal{M}=\{x\in\bar{D}\ |\ q(x)=\text{max}_{\bar{D}} q\}$. The proof in this section is much simpler than the proof of other two theorems. We break down the proof into several lemmas and begin by eliminating the patch part of $\zeta^{\varepsilon,\Lambda}$.
\begin{lemma}\label{lem5-1}
	One has 
	\begin{equation}\label{5-1}
	\mathcal{N}\zeta^{\varepsilon,\Lambda}:=\int_D\ln\frac{diam(D)}{|x-y|}\zeta^{\varepsilon,\Lambda}(y)d\nu(y)\le\kappa_0\delta(\varepsilon)\ln\frac{1}{\varepsilon}+C\delta(\varepsilon),	
	\end{equation}
	where $C>0$ does not depend on $\varepsilon$. As a result, for $\varepsilon$ sufficiently small there holds
	\begin{equation}\label{5-2}
	\mathcal{K}\zeta^{\varepsilon,\Lambda}\le\frac{\text{max}_{\bar{D}}b}{2\pi}\kappa_0\delta(\varepsilon)\ln\frac{1}{\varepsilon}+C\delta(\varepsilon). 
	\end{equation}
	Thus if we take $\Lambda$ sufficiently large, the patch part of $\zeta^{\varepsilon,\Lambda}$ vanishes and \eqref {2-3} can be substituted by
	\begin{equation}\label{5-3}
	\zeta^{\varepsilon,\Lambda}=\frac{\delta(\varepsilon)}{\varepsilon^2}f(\psi^{\varepsilon,\Lambda}){\chi}_{\{x\in D \mid \psi^{\varepsilon,\Lambda}(x)>0\}}.
	\end{equation}
\end{lemma}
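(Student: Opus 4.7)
The plan is to follow the blueprint of Lemma \ref{lem4-1}, replacing the critical relation $\delta(\varepsilon)=1/\ln\frac{1}{\varepsilon}$ by the weaker hypothesis $\delta(\varepsilon)\ln\frac{1}{\varepsilon}=o_\varepsilon(1)$. This changes the $O(1)$ bound on $\mathcal{N}\zeta^{\varepsilon,\Lambda}$ into a bound of size $\kappa_0\delta(\varepsilon)\ln\frac{1}{\varepsilon}$, but leaves the rest of the argument structurally intact.

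For \eqref{5-1}, I enlarge $\mathcal{A}_{\varepsilon,\Lambda}$ to the set
\begin{equation*}
\mathcal{B}_{\varepsilon,\Lambda}:=\bigl\{\zeta\in L^\infty(D)\mid 0\le b\zeta\le C_B\delta(\varepsilon)/\varepsilon^2\text{ a.e.},\,\int_D\zeta\,d\nu=\kappa_0\delta(\varepsilon)\bigr\},
\end{equation*}
where $C_B:=\Lambda\max_{\bar D}b$, so that $\mathcal{A}_{\varepsilon,\Lambda}\subset\mathcal{B}_{\varepsilon,\Lambda}$. Since the kernel $y\mapsto\ln(diam(D)/|x-y|)$ is decreasing in $|x-y|$, the bathtub principle (Lieb--Loss \cite{LL}, \S1.14) says that for each fixed $x\in D$ the supremum of $\mathcal{N}\zeta$ over $\mathcal{B}_{\varepsilon,\Lambda}$ is realized by concentrating $b\zeta$ at its maximum density $C_B\delta(\varepsilon)/\varepsilon^2$ on the ball $B_{\varepsilon\sqrt{\kappa_0/(C_B\pi)}}(x)$. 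Splitting this ball integral into the leading contribution $\kappa_0\delta(\varepsilon)\ln(diam(D)/\varepsilon)$ coming from the $\ln(diam(D)/\varepsilon)$ factor and the $O(\delta(\varepsilon))$ piece from $\int_{B_{\varepsilon\sqrt{\kappa_0/(C_B\pi)}}(0)}\ln(\varepsilon/|y|)\,dm(y)$ then yields \eqref{5-1}.

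To pass from \eqref{5-1} to \eqref{5-2}, I apply the integral kernel representation in Definition \ref{def2}. Lemma \ref{lem3-1} gives $H(x,y)\ge 0$, hence $G(x,y)\le \frac{1}{2\pi}\ln(diam(D)/|x-y|)$. Together with the uniform boundedness of $R$ and \eqref{5-1}, this immediately produces \eqref{5-2}.

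For the patch to vanish, I combine \eqref{5-2} with the uniform lower bound $\mu^{\varepsilon,\Lambda}\ge -f^{-1}(f(0^+)+1)+\min_{\bar D}q-1$ from Lemma \ref{lem2-2}. This yields a uniform upper bound $\psi^{\varepsilon,\Lambda}\le C_\psi+o_\varepsilon(1)$, where $C_\psi$ depends only on $q$, $\kappa_0$, $b$ and $f(0^+)$. Choosing $\Lambda$ so large that $f^{-1}(\Lambda)>C_\psi+1$ gives $\psi^{\varepsilon,\Lambda}<f^{-1}(\Lambda)$ pointwise once $\varepsilon$ is small, whence $\{\psi^{\varepsilon,\Lambda}\ge f^{-1}(\Lambda)\}$ has measure zero and \eqref{2-3} collapses to \eqref{5-3}. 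The only mild obstacle is arranging the estimates on $\mathcal{K}\zeta^{\varepsilon,\Lambda}$ and $\mathcal{N}\zeta^{\varepsilon,\Lambda}$ so that the dominant parts are independent of $\Lambda$; this is guaranteed because the $\Lambda$-dependence enters only through a logarithmic additive correction absorbed into the $O(\delta(\varepsilon))$ term, so fixing $\Lambda$ large at the last step does not invalidate the earlier bounds.
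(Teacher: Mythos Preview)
Your proof is correct and follows essentially the same approach as the paper's: both enlarge $\mathcal{A}_{\varepsilon,\Lambda}$ to $\mathcal{B}_{\varepsilon,\Lambda}$ and apply the bathtub principle to bound $\mathcal{N}\zeta^{\varepsilon,\Lambda}$, then pass to $\mathcal{K}\zeta^{\varepsilon,\Lambda}$ via the kernel representation and $H\ge 0$, and finally combine \eqref{5-2} with the $\Lambda$-independent lower bound on $\mu^{\varepsilon,\Lambda}$ from Lemma \ref{lem2-2} to force $\psi^{\varepsilon,\Lambda}<f^{-1}(\Lambda)$ once $\Lambda$ is fixed large and $\varepsilon$ is taken small (depending on $\Lambda$). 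Your explicit remark that the $\Lambda$-dependence sits only in the $O(\delta(\varepsilon))$ correction is exactly the mechanism the paper uses when it introduces the threshold $\varepsilon_4(\Lambda)$.
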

\begin{proof}
	By Lemma \ref{lem2-1}, there holds 
	$$\zeta^{\varepsilon,\Lambda}\in\mathcal{A}_{\varepsilon,\Lambda}=\{\zeta\in L^\infty(D)~|~ 0\le \zeta \le \frac{\Lambda\delta(\varepsilon)}{\varepsilon^2}~ \mbox{ a.e. in }D, \int_{D}\zeta(x)d\nu (x)=\kappa_0\delta(\varepsilon) \}.$$ 
	To give a upper bound of $\mathcal{N}\zeta^{\varepsilon,\Lambda}$, we enlarge $\mathcal{A}_{\varepsilon,\Lambda}$ to be $$\mathcal{B}_{\varepsilon,\Lambda}=\{\zeta\in L^\infty(D)~|~ 0\le \zeta b(x) \le \frac{\Lambda\text{max}_{\bar{D}}b\delta(\varepsilon)}{\varepsilon^2}=\frac{C_B\delta(\varepsilon)}{\varepsilon^2}~ \mbox{ a.e. in }D, \int_{D}\zeta(x)d\nu (x)=\kappa_0\delta(\varepsilon) \}.$$
	It is obvious that $\text{sup}_{\mathcal{B}_{\varepsilon,\Lambda}}\mathcal{N}\zeta\ge\text{sup}_{\mathcal{A}_{\varepsilon,\Lambda}}\mathcal{N}\zeta$. By the bathtub principle \cite{LL}, We can calculate just as in Lemma \ref{lem4-1} to obtain
	\begin{equation*}
	\mathcal{N}\zeta^{\varepsilon,\Lambda}\le\text{sup}_{\mathcal{B}_{\varepsilon,\Lambda}}\mathcal{N}\zeta
	\le\kappa_0\delta(\varepsilon)\ln\frac{1}{\varepsilon}+C\delta(\varepsilon),
	\end{equation*}
	where $C$ is a constant independent of $\varepsilon$. Since $C_B=\Lambda\text{max}_{\bar{D}}b$, we can take $\varepsilon<\varepsilon_4(\Lambda)$ such that $$\frac{\text{max}_{\bar{D}}b}{2\pi}\mathcal{N}\zeta^{\varepsilon,\Lambda}+\kappa_0\delta(\varepsilon)||R||_{L^\infty(D\cdot D)}< 2.$$
	By the definition of $\mathcal{K}\zeta^{\varepsilon,\Lambda}$ in Definition \ref{def2}, we obtain \eqref{5-2}. If we fix $\Lambda$ such that
	\begin{equation}\label{5-4}
	f^{-1}(\Lambda)>\text{max}_{\bar{D}}\mathcal{K}\zeta^{\varepsilon,\Lambda}+\text{max}_{\bar{D}}q-\text{min}\mu^{\varepsilon,\Lambda},
	\end{equation}
	one can get
	$$f^{-1}(\Lambda)>\mathcal{K}\zeta^{\varepsilon,\Lambda}(x)+q(x)-\mu^{\varepsilon,\Lambda}\ \ \ \ \forall x\in D.$$
	So it is obvious that
	\begin{equation}\label{5-5}
	|\{x\in D\mid\zeta^{\varepsilon,\Lambda}(x)=\frac{\delta(\varepsilon)\Lambda}{\varepsilon^2}\}|=0.
	\end{equation}
	Hence the patch part of $\zeta^{\varepsilon,\Lambda}$ vanishes and \eqref{5-3} holds.	
\end{proof}	

Now we can fix $\Lambda$ sufficiently large such that \eqref{5-4} holds and abbreviate $(\mathcal{A}_{\varepsilon,\Lambda},\zeta^{\varepsilon,\Lambda},\psi^{\varepsilon,\Lambda}, \mu^{\varepsilon,\Lambda})$ as $(\mathcal{A}_{\varepsilon},\zeta^{\varepsilon}, \psi^{\varepsilon}, \mu^{\varepsilon})$. To prove Theorem \ref{thm3}, we just need to analyze the limiting behavior of $\mu^\varepsilon$ as $\varepsilon$ goes to zero, with no requirement that $f$ satisfies $(\text{H2})$. To illustrate the next lemma, we denote the $l$-neighborhood of $\mathcal{M}$ as
$$\mathcal{M}_l:=\{x\in D|dist(x,\mathcal{M})<l\}.$$
\begin{lemma}\label{lem5-2}
	As $\varepsilon\to0^+$, one has
	\begin{equation}\label{5-6}
	\mu^\varepsilon=\text{max}_{\bar{D}}q+o_\varepsilon(1).
	\end{equation}	
\end{lemma}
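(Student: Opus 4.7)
My plan is to sandwich $\mu^\varepsilon$ between $\max_{\bar D}q-o_\varepsilon(1)$ and $\max_{\bar D}q+o_\varepsilon(1)$. The central analytical input is the two-sided estimate $\mathcal K\zeta^\varepsilon=o_\varepsilon(1)$ in $L^\infty(D)$: the upper half is precisely \eqref{5-2} of Lemma~\ref{lem5-1}, which reduces to $o_\varepsilon(1)$ via the hypothesis $\delta(\varepsilon)\ln\frac{1}{\varepsilon}=o_\varepsilon(1)$; the complementary lower half follows immediately from the integral kernel representation together with $G\ge 0$ and $R\in L^\infty(D\times D)$, giving $\mathcal K\zeta^\varepsilon \ge -\|R\|_{L^\infty}\kappa_0\delta(\varepsilon) = -o_\varepsilon(1)$.

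First I would establish the upper bound. Pick any $x^\varepsilon \in \mathrm{supp}(\zeta^\varepsilon)$, which is nonempty since $\int\zeta^\varepsilon\,d\nu = \kappa_0\delta(\varepsilon) > 0$. Because Lemma~\ref{lem5-1} has eliminated the patch part, $\zeta^\varepsilon = \frac{\delta(\varepsilon)}{\varepsilon^2}f(\psi^\varepsilon)\chi_{\{\psi^\varepsilon>0\}}$, so $\psi^\varepsilon(x^\varepsilon)>0$, i.e.\ $\mathcal K\zeta^\varepsilon(x^\varepsilon)+q(x^\varepsilon)>\mu^\varepsilon$. Combined with $\mathcal K\zeta^\varepsilon(x^\varepsilon)\le o_\varepsilon(1)$ and $q(x^\varepsilon)\le\max_{\bar D}q$, this yields the upper estimate $\mu^\varepsilon<\max_{\bar D}q+o_\varepsilon(1)$.

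The lower bound I would prove by contradiction. Suppose that along some subsequence $\varepsilon_n\to 0^+$ one has $\mu^{\varepsilon_n}\le\max_{\bar D}q-4\eta$ for a fixed $\eta>0$. Fix $x_0\in\bar D$ with $q(x_0)=\max_{\bar D}q$, and use continuity of $q$ to pick a ball $B_r(x_0)$ on which $q\ge\max_{\bar D}q-\eta$; since $D$ is open and $b>0$ in $D$, the set $U:=B_r(x_0)\cap D$ has strictly positive $\nu$-measure. On $U$, for $n$ sufficiently large, the two-sided bound on $\mathcal K\zeta^{\varepsilon_n}$ gives
\[
\psi^{\varepsilon_n}=\mathcal K\zeta^{\varepsilon_n}+q-\mu^{\varepsilon_n}\ge -o_{\varepsilon_n}(1)+(\max_{\bar D}q-\eta)-(\max_{\bar D}q-4\eta)\ge 2\eta.
\]
By hypothesis (H1), $f$ is strictly increasing on $[0,+\infty)$, so $f(2\eta)>f(0^+)\ge 0$ is a strictly positive constant independent of $n$. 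Hence $\zeta^{\varepsilon_n}\ge\frac{\delta(\varepsilon_n)f(2\eta)}{\varepsilon_n^{2}}$ pointwise on $U$. Integrating against $d\nu$ and using $\int_D\zeta^{\varepsilon_n}\,d\nu=\kappa_0\delta(\varepsilon_n)$ produces $\kappa_0\varepsilon_n^{2}\ge f(2\eta)|U|$, which contradicts $\varepsilon_n\to 0^+$.

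I do not anticipate any serious obstacle: essentially all the delicate work (uniform $L^\infty$ control of $\mathcal K\zeta^\varepsilon$ and elimination of the patch part) has already been packaged into Lemma~\ref{lem5-1}, and the contradiction exploits only the strict monotonicity of $f$. In particular, hypothesis (H2) is never invoked, which is consistent with the statement of Theorem~\ref{thm3}.
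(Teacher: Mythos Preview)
Your proof is correct and follows essentially the same strategy as the paper: bound $\mu^\varepsilon$ from above via a point in $\mathrm{supp}(\zeta^\varepsilon)$, and from below by contradiction using the integral constraint $\int_D\zeta^\varepsilon\,d\nu=\kappa_0\delta(\varepsilon)$ on a neighborhood of $\arg\max_{\bar D}q$. Your lower-bound argument is in fact slightly cleaner than the paper's, which inserts an intermediate step (first arguing $\mathcal M_{l_1}\not\subset\mathrm{supp}(\zeta^{\varepsilon_i})$ and then selecting a point outside the support) before arriving at the same integral blow-up.
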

\begin{proof}
	According to Lemma \ref{lem5-1}, as $\varepsilon\to0^+$ there holds $$\mu^\varepsilon\le\text{max}_{\bar{D}}(\mathcal{K}\zeta^\varepsilon+q)\le\text{max}_{\bar{D}}q+o_\varepsilon(1).$$ 
	Hence we only have to prove
	\begin{equation}\label{5-7}
	\liminf\limits_{\varepsilon\to0^+}\mu^\varepsilon\ge \text{max}_{\bar{D}}q
	\end{equation}
	Then we can argue by contradiction. Suppose there exists an $a>0$ and a subsequence $\{\varepsilon_i\}_i^\infty$ satisfying $\varepsilon_i\to 0^+$ as $i\to +\infty$, such that $\mu^{\varepsilon_i}\le\text{max}_{\bar{D}}q-a$ when $i$ is sufficiently large. Take $l_1>0$ sufficiently small, such that $\text{inf}_{\mathcal{M}_{l_1}}q\ge\text{max}_{\bar{D}}q-\frac{a}{2}$. Suppose $\mathcal{M}_{l_1}\subset supp(\zeta^{\varepsilon_i})$ and we obtain
	$$\kappa_0\delta(\varepsilon_i)=\int_D\zeta^{\varepsilon_i} d\nu\ge\int_{\mathcal{M}_{l_1}}\zeta^{\varepsilon_i} d\nu=\int_{\mathcal{M}_{l_1}}\frac{\delta(\varepsilon_i)}{{\varepsilon_i}^2}f(\mathcal{K}\zeta^{\varepsilon_i}+q-\mu^{\varepsilon_i})d\nu.$$
	By \eqref{5-2}, we can take $i$ sufficiently large such that $|\mathcal{K}\zeta^{\varepsilon_i}|\le\frac{a}{4}$. Then for each $x\in\mathcal{M}_{l_1}$, there holds
	$$f\left((\mathcal{K}\zeta^{\varepsilon_i}+q)(x)-\mu^{\varepsilon_i}\right)\ge\frac{a}{4},$$
	and we obtain $$\kappa_0\delta(\varepsilon_i)\ge\int_{\mathcal{M}_{l_1}}\frac{\delta(\varepsilon_i)}{{\varepsilon_i}^2}f(\mathcal{K}\zeta^{\varepsilon_i}+q-\mu^{\varepsilon_i})d\nu\ge\frac{a\delta(\varepsilon_i)}{4{\varepsilon_i}^2}|\mathcal{M}_{l_1}|,$$
	which yields a contradiction as $\varepsilon_i\to0^+$. Thus we assert $\mathcal{M}_{l_1}\not\subset supp(\zeta^{\varepsilon_i})$. So we can take a sequence $\{x^i\}$ satisfying $x^i\in\mathcal{M}_{l_1}\setminus supp(\zeta^\varepsilon)$ and $q(x^i)\ge\text{max}_{\bar{D}}q-\frac{a}{2}$, such that
	$$\mu^{\varepsilon_i}\ge \mathcal{K}\zeta^{\varepsilon_i}(x^i)+q(x^i)>-\frac{a}{4}+\text{max}_{\bar{D}}q-\frac{a}{2}=\text{max}_{\bar{D}}q-\frac{3a}{4},$$
	which contradicts the assumpption $\mu^{\varepsilon_i}\le\text{max}_{\bar{D}}q-a$. Thus we complete the proof.	
\end{proof}

In the last, we study the localization of $supp(\zeta^\varepsilon)$ as $\varepsilon$ goes to zero.
\begin{lemma}\label{lem5-3}
	For any $l>0$, we have $supp(\zeta^\varepsilon)\in\mathcal{M}_l$ if $\varepsilon$ is sufficiently small.
\end{lemma}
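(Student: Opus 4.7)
The plan is to exploit the pointwise representation $\zeta^\varepsilon = \frac{\delta(\varepsilon)}{\varepsilon^2} f(\psi^\varepsilon)\chi_{\{\psi^\varepsilon>0\}}$ from \eqref{5-3} and show that $\psi^\varepsilon$ is strictly negative outside $\mathcal{M}_l$ once $\varepsilon$ is sufficiently small. Since $f(\psi^\varepsilon)$ then vanishes there, the support containment follows immediately.

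First, I would set up a uniform gap. Because $q\in C(\bar{D})$ and $\mathcal{M}\subset\bar{D}$ is the (compact) set where $q$ attains its maximum, the complement $\bar{D}\setminus\mathcal{M}_l$ is compact, and on it $q$ is strictly less than $\max_{\bar{D}} q$. Thus
$$\sigma_l := \max_{\bar{D}} q - \sup_{\bar{D}\setminus\mathcal{M}_l} q > 0.$$

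Next, I would combine the two uniform estimates already established in Section~5. From Lemma~\ref{lem5-1}, using the assumption $\delta(\varepsilon)\ln\frac{1}{\varepsilon}=o_\varepsilon(1)$, we have $\|\mathcal{K}\zeta^\varepsilon\|_{L^\infty(D)}=o_\varepsilon(1)$ uniformly in $x$, and from Lemma~\ref{lem5-2}, $\mu^\varepsilon=\max_{\bar{D}}q+o_\varepsilon(1)$. Therefore, for every $x\in \bar{D}\setminus\mathcal{M}_l$,
$$\psi^\varepsilon(x) = \mathcal{K}\zeta^\varepsilon(x) + q(x) - \mu^\varepsilon \le o_\varepsilon(1) + \bigl(\max_{\bar{D}} q - \sigma_l\bigr) - \bigl(\max_{\bar{D}} q + o_\varepsilon(1)\bigr) = -\sigma_l + o_\varepsilon(1).$$
Taking $\varepsilon$ small enough so that the error term is bounded by $\sigma_l/2$, I obtain $\psi^\varepsilon(x)\le -\sigma_l/2<0$ throughout $\bar{D}\setminus\mathcal{M}_l$. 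In view of \eqref{5-3}, this forces $\zeta^\varepsilon=0$ on $\bar{D}\setminus\mathcal{M}_l$, which is exactly the desired inclusion $\mathrm{supp}(\zeta^\varepsilon)\subset\mathcal{M}_l$.

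There is no real obstacle in this argument: the lemma is essentially a direct corollary of the uniform upper bound on $\mathcal{K}\zeta^\varepsilon$ (Lemma~\ref{lem5-1}) together with the sharp asymptotics of the Lagrange multiplier $\mu^\varepsilon$ (Lemma~\ref{lem5-2}), and the only structural ingredient is the continuity of $q$ which converts the set-theoretic statement ``$x\notin\mathcal{M}$ implies $q(x)<\max q$'' into the quantitative gap $\sigma_l$. This also completes Theorem~\ref{thm3} once combined with Lemmas~\ref{lem5-1}--\ref{lem5-2}, since part~(ii) of the theorem is precisely the conjunction of $\mathcal{K}\zeta^\varepsilon=o_\varepsilon(1)$, $\mu^\varepsilon=\max_{\bar{D}}q+o_\varepsilon(1)$, and $\mathrm{supp}(\zeta^\varepsilon)\subset\mathcal{M}_l$.
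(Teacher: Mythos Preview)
Your proof is correct and follows essentially the same approach as the paper's: both combine the uniform smallness of $\mathcal{K}\zeta^\varepsilon$ (Lemma~\ref{lem5-1}) with the asymptotics $\mu^\varepsilon=\max_{\bar D}q+o_\varepsilon(1)$ (Lemma~\ref{lem5-2}) and the continuity of $q$ to force $\psi^\varepsilon<0$ away from $\mathcal{M}$; the paper phrases this by contradiction along a subsequence, whereas you give the equivalent direct argument via the quantitative gap $\sigma_l$. One small caveat: since $\mathcal{M}_l\subset D$ by definition, take the supremum over $\{x\in\bar D:\mathrm{dist}(x,\mathcal{M})\ge l\}$ (or simply over $D\setminus\mathcal{M}_l$) rather than $\bar D\setminus\mathcal{M}_l$, so that $\sigma_l>0$ is guaranteed even if $\mathcal{M}$ touches $\partial D$.
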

\begin{proof}
The proof is by contradiction. We suppose there exists an $s>0$, a sequence $\{x^i_0\}$ in $D$ and a subsequence $\{\varepsilon_i\}_i^\infty$ satisfying $\varepsilon_i\to 0^+$ as $i\to +\infty$, such that $x^i_0\in supp(\zeta^{\varepsilon_i})$ but $x^i_0\not\in \mathcal{M}_s$. By the continuity of $q$, we have $$\limsup_{n\to\infty}q(x^i_0)<\text{max}_{\bar{D}}q.$$
Hence when $i$ is sufficiently large, there holds
$$\mathcal{K}\zeta^{\varepsilon_i}(x^i_0)+q(x^i_0)<\mu^{\varepsilon_i}=\text{max}_{\bar{D}}q+o_\varepsilon(1).$$
We get a contradiction and hence complete the proof.
\end{proof}


\begin{thebibliography}{99}
	
	\bibitem{AR}
	A. Ambrosetti and P. Rabinowitz, Dual variational methods in critical point theory and applications, \textit{J. Funct. Anal}, 14(1973),349-381.
	
	\bibitem{AS}
	A. Ambrosetti and M. Struwe, Existence of steady vortex rings in an ideal fluid, \textit{Arch. Rational Mech. Anal}, 108(2)(1989), 97--109.  
	
	\bibitem{A}
	V. I. Arnold, Mathematical methods of classical mechanics, \textit{Graduate Texts in Mathematics, Vol. 60.} Springer, New York, 1978.
		
	\bibitem{AK}
	V. I. Arnold and B. A. Khesin, Topological methods in hydrodynamics,
	\textit{Applied Mathematical Sciences, Vol. 125.} Springer, New
	York, 1998.
	
	\bibitem{BF1}
	M. S. Berger and L. E. Fraenkel, A global theory of steady vortex rings in an ideal fluid, \textit{Acta Math.},
	132(1974), 13--51.
	
	\bibitem{BF2}
	M. S. Berger and L. E. Fraenkel, Nonlinear desingularization in certain free-boundary problems, \textit{Comm. Math. Phys.},
	77(1980), 149--172.
	
	\bibitem{BG}
	A. Burchard and Y. Guo, Compactness via symmetrization, \textit{J. Funct. Anal.}, 214(1)(2004), 40-73.
	
	\bibitem{B}
	G. R. Burton, Global nonlinear stability for steady ideal fluid flow in bounded planar domains, \textit{Arch. Ration. Mech. Anal.}, 176(2005), 149-163.
	
	\bibitem{CWZ}
	D. Cao, J. Wan and W. Zhan, Desingularization of vortex rings in 3 dimensional Euler flows: with swirl, arXiv:1909.00355.
	
	\bibitem{CW3}
	D. Cao, G. Wang and W. Zhan, Steady vortex patches near a nontrivial irrotational flow, \textit{Sci China Math,} 63(2020), https://doi.org/10.1007/s11425-018-9495-1.
	
	\bibitem{CZZ}
	D. Cao, W. Zhan and C. Zou, On desingularization of steady vortex in the lake equations, arXiv: 1911.01037.
	
	\bibitem{D1}
	J. Dekeyser, Desingularization of a steady vortex pair in the lake equation, arXiv:1711.06497.
	
	\bibitem{D2}
	J. Dekeyser, Asymptotic of steady vortex pair in the lake equation, \textit{SIAM J. Math. Anal.}, 51 (2019), no. 2, 1209-1237.
		
	\bibitem{DV}
	S. de Valeriola and J. Van Schaftingen, Desingularization of vortex rings and shallow water vortices by semilinear elliptic problem,
	\textit{ Arch. Ration. Mech. Anal.}, 210(2)(2013), 409--450.
	
	\bibitem{FT}
	A. Friedman and B. Turkington, Vortex rings: existence and asymptotic estimates, \textit{Trans. Am. Math. Soc}, 268(1)(1981), 1--37.
		
	\bibitem{LYY} G. Li, S. Yan and J. Yang, An elliptic problem related to planar vortex pairs,
	\textit{ SIAM J. Math. Anal.}, 36 (2005), 1444--1460.
	
	\bibitem{LL} E. H. Lieb and M. Loss, Analysis, Second edition,\textit{ Graduate Studies in Mathematics, Vol. 14}. American Mathematical Society, Providence, RI (2001).

	\bibitem{SV}
	D. Smets and  J. Van Schaftingen, Desingulariation of vortices for
	the Euler equation,
	\textit{ Arch. Ration. Mech. Anal.},  198(2010),   869--925.	
	
	\bibitem{T}
	B. Turkington, On steady vortex flow in two dimensions. I, II,
	\textit{Comm. Partial
		Differential Equations}, 8(1983), 999--1030, 1031--1071.
	
	\bibitem{YJ}
	J. Yang, Existence and asymptotic behavior in planar vortex theory, 
	\textit{Math. Models Methods Appl. Sci.}, 1(4)(1991), 461-475.
	

\end{thebibliography}
\end{document}